\documentclass[12pt]{article}

\usepackage[utf8]{inputenc}
\usepackage[english]{babel}
\usepackage[T1]{fontenc}
\usepackage{epigraph}
\setlength{\epigraphrule}{0pt}
\setlength{\epigraphwidth}{12cm}

\usepackage{enumitem}
\usepackage{amsmath,latexsym,amsthm,mathtools}
\usepackage{amssymb}
\usepackage[bookmarks,bookmarksnumbered,pdfstartview=FitBH,backref=page]{hyperref}
\hypersetup{
	colorlinks=true, 
	linkcolor=blue,
	citecolor= blue,
	linktoc= all, 
}

\numberwithin{equation}{section}

\newtheorem{proposition}[equation]{Proposition}
\newtheorem{theorem}[equation]{Theorem}
\newtheorem{corollary}[equation]{Corollary}
\newtheorem{lemma}[equation]{Lemma}

\newtheorem{claim}[equation]{Claim}

\newtheorem{theoremintro}{Theorem}

\newtheorem{propositionintro}[theoremintro]{Proposition}

\theoremstyle{definition}
\newtheorem{definition}[equation]{Definition}

\newtheorem{remark}[equation]{Remark}
\newtheorem*{remark*}{Remark}

\newtheorem*{question*}{Question}
\newtheorem{questionintro}{Question}

\newenvironment{cproof}{\begin{proof}[Proof of the
        claim]}{\end{proof}}
\newenvironment{cproofbis}[1]{\begin{proof}[#1]}{\end{proof}}

\def\defin#1{\textbf{#1}} 
\DeclareMathOperator{\dom}{\mathrm{dom}}
\DeclareMathOperator{\rng}{\mathrm{rng}}
\newcommand{\vv}{v}

\newcommand{\N}{\mathbb{N}}
\newcommand{\Z}{\mathbb{Z}}

\newcommand{\inv}{^{-1}}
\newcommand{\la}{\left\langle}
\newcommand{\ra}{\right\rangle}
\newcommand{\lala}{\left\langle\!\left\langle}
\newcommand{\rara}{\right\rangle\!\right\rangle}

\newcommand{\abs}[1]{\left\lvert #1\right\rvert}

\newcommand{\Ac}{\mathcal{A}}
\newcommand{\Fc}{\mathcal{F}}
\newcommand{\Gc}{\mathcal{G}}

\newcommand{\Hc}{\mathcal{H}}
\newcommand{\Ic}{\mathcal{I}}

\newcommand{\Nc}{\mathcal{N}}
\newcommand{\Oc}{\mathcal{O}}

\newcommand{\Rc}{\mathcal{R}}

\newcommand{\Vc}{\mathcal{V}}

\newcommand{\PK}{\mathcal{K}}
\newcommand{\QQ}{\mathcal{Q}}

\def\id{\mathrm{id}}

\newcommand\BSo{\mathrm{BS}}
\newcommand\BSe{\mathbf{BS}}
\newcommand\Phe{\mathrm{Ph}}
\DeclareMathOperator\PHE{\mathbf{Ph}}
\newcommand\PHEred{\mathbf{Ph}_{\textrm{red}}}
\newcommand\primes{\mathcal{P}}

\newcommand\degout{\deg_{\mathrm{out}}}
\newcommand\degin{\deg_{\mathrm{in}}}
\newcommand{\Sym}{\mathrm{Sym}}
\newcommand{\Stab}{\mathrm{Stab}}
\newcommand{\Sub}{\mathrm{Sub}}
\newcommand{\Sch}{\mathbf{Sch}}
\newcommand\Schreier{\Sch}
\newcommand\Cayley{\mathbf{Cay}}

\newcommand{\trrule}{\mathtt{r}}

\newcommand\Tree{\mathcal{T}}

\newcommand{\source}{\mathtt{s}}
\newcommand{\target}{\mathtt{t}}

\newcommand{\bs}{\backslash}

\usepackage{xcolor}

\usepackage{tikz}
\usetikzlibrary{arrows}
\usetikzlibrary{cd}

\usepackage{enumitem}
\setlist{nosep}

\newcommand\pathinT{c}

\author{
	Damien Gaboriau, \\
	François Le Maître and Yves Stalder
}

\title{On the space of subgroups of Baumslag-Solitar groups II: High transitivity}
\date{\today}

\begin{document}
\maketitle
\begin{abstract}
    We continue our study of the perfect kernel of the space of transitive actions of Baumslag-Solitar
    groups by investigating high transitivity.
    We show that actions of finite phenotype are never highly transitive,
    except when the phenotype is $1$, in which case high transitivity is actually generic. 
    In infinite phenotype, high transitivity is generic, except 
    when $\lvert m \rvert= \lvert n\rvert$ 
    where it is empty.
    We also reinforce the dynamical properties of the action by conjugation on the perfect kernel that we had established in our first paper, replacing topological transitivity by high topological transitivity.
\end{abstract}
{
		\small	
		\noindent\textbf{{Keywords:}} Baumslag-Solitar groups; space of subgroups; perfect kernel; high transitivity; topologically transitive actions; Bass-Serre theory.
	}
	
	\smallskip
	
	{
		\small	
		\noindent\textbf{{MSC-classification:}}	
		20B22; 37B; 20E06; 20E08; 20F65.
	}

\tableofcontents
\section{Introduction}

In order to study the possible dynamics of a given countable group \(\Gamma\), a natural first 
step is to study its transitive actions.
One particularly striking property a transitive action can have is \textbf{high transitivity} (see Definition~\ref{def: HT}). 
A basic example is the natural action of the group $\Sym_f(\N)$ of finitely supported permutations of \(\N\). 
The first example of a finitely generated group with a highly transitive faithful action was exhibited by B. H. Neumann who pointed out that the natural action of 
$\Sym_f(\Z)\rtimes\Z$ on $\Z$ is also highly transitive. Since the latter group is two-generated,
it follows that the free group $\mathbb F_2$ admits a (non-faithful!) highly transitive action. 
As for faithful highly transitive actions of the free group, they were initially constructed by McDonough
\cite{mcdonoughPermutationRepresentationFree1977}.
This strong property was eventually proved to be generic among all transitive $\mathbb F_2$-actions \cite{dixon_most_1990}. 

Subsequently, many groups were shown to admit faithful highly transitive actions, see e.g. 
\cite{chaynikov_properties_2012, kitroserHighlytransitiveActionsSurface2012, garionHighlyTransitiveActions2013, moonHighlyTransitiveActions2013, fima_highly_2015,
hullTransitivitydegreescountable2016, fima_characterization_HT_2022}
and references therein. The proofs often rely on Baire category arguments in a tailored space of actions
(see Remark \ref{rmk: ht for hull osin} for details on how to recast Chaynikov's and Hull-Osin's inductive constructions using the Baire category theorem). 
A notable exception is \cite{garionHighlyTransitiveActions2013},
where explicit faithful highly transitive actions of  $\operatorname{Out}(\mathbb F_n)$ are exhibited.

Since transitive actions are completely encoded by the conjugacy class of 
the associated stabilizer subgroup, one is naturally led to the study of the \(\Gamma\)-action by conjugation on its  
space of subgroups \(\Sub(\Gamma)\). This space carries a natural compact Polish topology. 
High transitivity only makes sense for actions on infinite sets, 
so we focus on the Polish subspace \(\Sub_{[\infty]}(\Gamma)\) of 
infinite index subgroups.
Denote by \(\mathcal{HT}(\Gamma)\) the set of subgroups \(\Lambda\in \Sub_{[\infty]}(\Gamma)\) such that \(\Lambda\bs\Gamma\curvearrowleft\Gamma\) is highly transitive. 
This set is \(G_\delta\) (see Lemma~\ref{lem: Sub infty and HT are Gdelta}).

\begin{questionintro}\label{qu: HT dense}
When is it true that
	\(\mathcal{HT}(\Gamma)\) is dense in \(\Sub_{[\infty]}(\Gamma)\)?
\end{questionintro}

We are thus asking what is the class of countable groups whose {\em generic}
transitive actions on an infinite set are actually highly transitive.

It sometimes happens that 
the \( \Gamma \)-action
on \(\Sub_{[\infty]}(\Gamma)\) is topologically transitive \cite{azuelosPerfectKernelDynamics2023}.
In this case, the topological zero-one law \cite[Theorem~8.46]{kechris_classical_1995} ensures that either the $G_\delta$ set
\(\mathcal{HT}(\Gamma)\)
is dense in \(\Sub_{[\infty]}(\Gamma)\), or its complement contains 
a dense \(G_\delta\) set. 
For instance, when \(\Gamma\) is the group \(\Sym_f(\N)\), 
even though \(\Gamma\) acts topologically transitively on \(\Sub_{[\infty]}(\Gamma)\) (see for instance \cite[Ex.~9.43 and Prop.~9.44]{le_maitre_polish_2024}), 
\(\mathcal{HT}(\Gamma)\) consists of a single (meager) conjugacy class 
by \cite[Prop.~2.4]{leboudecConfinedSubgroupsHigh2022}.

A first example with dense \(\mathcal{HT}(\Gamma)\) is provided by free groups as a consequence of Dixon's aforementioned result.
More generally, this holds for free products \(\Gamma_1*\Gamma_2\), 
with \(\abs{\Gamma_1}\geq 2\) and \(\abs{\Gamma_2}\geq 3\) by \cite[Thm.~10.30]{le_maitre_polish_2024}.
A natural next step is to consider groups acting on trees.
In the aforementioned work \cite{fima_characterization_HT_2022}, it was shown that when 
$\Gamma$ admits a faithful minimal action of general type on a tree,  then
$\Gamma$ admits a faithful highly transitive action as soon as the action on the boundary of \(\Tree\)
is topologically free. A key example is given by Baumslag-Solitar groups.
For these groups, we previously showed that the \( \Gamma \)-action  on \(\Sub_{[\infty]}(\Gamma)\) is not topologically transitive.

To be more precise, let us fix some parameters $m,n\in\Z$ with \(\abs{m}\geq 2\) and \(\abs{n}\geq 2\) and consider the Baumslag-Solitar group 
\[\Gamma=\BSo(m,n) \coloneqq \la b,t\mid t b^m t^{-1}=b^n\ra.\]
In \cite{CGLMS-22} we unveiled a natural partition of \(\Sub(\Gamma)\) 
into \(\Gamma\)-invariant subsets provided by a
conjugation invariant map
\(\PHE_{m,n}:\Sub(\Gamma)\to \Z_{\geq 1}\cup\{\infty\}\) 
that we call the  \emph{$(m,n)$-phenotype map}.

This phenotype $\PHE_{m,n}(\Lambda)$ is obtained from the index $[\la b\ra : \la b\ra \cap \Lambda]$ as follows: 
 if $[\la b\ra : \la b\ra \cap \Lambda]=\infty$, then $\PHE_{m,n}(\Lambda)=\infty$. Otherwise, remove from the prime factors decomposition of $[\la b\ra : \la b\ra \cap \Lambda]$
 all the prime numbers $p$ that appear in $m$ or $n$, 
 except those such that $\abs{m}_p=\abs{n}_p<\abs{[\la b\ra : \la b\ra \cap \Lambda]}_p$, 
 where $\abs{k}_p$ denotes the $p$-adic valuation of $k$
 (see Section \ref{sec:phenotype} for details).
 Thus $\PHE_{m,n}(\la b \ra)=1$ and $ \PHE_{m,n}(\{\id\})=\infty$ and 
 $\PHE_{m,n}(\la b^q \ra)=q$ for every $q\in\Z_{\geq 1}$ relatively prime to both \(m\) and \(n\). 
 In particular, the set of possible phenotypes
 \(\QQ_{m,n}\coloneqq \PHE_{m,n}(\Sub(\Gamma))\) is 
 infinite and always contains $1$ and $\infty$, independently of $(m,n)$.
 Observe that $\PHE_{m,n}\inv(\infty)$ contains only infinite index subgroups.

Next, we proved that the perfect kernel $\PK(\Gamma)$ 
of the space $\Sub(\BSo(m,n))$ is the 
set of subgroups $\Lambda\leq\Gamma$
such that the double quotient  $\Lambda\bs\Gamma/\la b\ra$ is infinite.
Letting \(\PK_q\coloneqq \PHE_{m,n}\inv(q)\cap \PK(\Gamma)\) for all phenotype \(q\in\mathcal Q_{m,n}\),
we obtain a \(\Gamma\)-invariant decomposition 
\[
\PK(\Gamma)= \bigsqcup_{q\in\mathcal Q_{m,n}} \PK_q.
\]
All the pieces \(\PK_q\) for \(q<\infty\) are open in \(\PK(\Gamma)\),
while \(\PK_\infty\) is closed\footnote{Furthermore, when \(\abs m=\abs n\),
all the pieces \(\PK_q\) for \(q<\infty\) are clopen.}.
A key result from \cite{CGLMS-22} is the fact that the \(\BSo(m,n)\)-action
on each of the pieces \(\PK_q\) of the above partition is topologically 
transitive.

Furthermore, the perfect kernel is almost equal to the space of infinite index subgroups:
if we define the remaining piece as 
\[\mathcal{C}_{\infty}\coloneqq\Sub_{[\infty]}(\Gamma)\smallsetminus \PK(\Gamma),\] 
then 
\(\mathcal C_\infty\) is a countable open subset of \(\PHE_{m,n}\inv(\infty)\) which is empty
exactly when \(\abs m\neq \abs n\). 
The complete picture for the space of infinite index subgroups 
is thus as follows:
\[
\Sub_{[\infty]}(\BSo(m,n))= \underbrace{\mathcal{C}_{\infty}\sqcup {\PK_{\infty}}}_{\Sub_{[\infty]}\cap \PHE\inv(\infty)}\
\phantom{aaaaaaaaaaa}\llap{$\overbrace{\phantom{aaaa}
\sqcup \bigsqcup_{q\in\mathcal Q_{m,n}\smallsetminus\{\infty\}} \PK_q}^{\PK(\BSo(m,n))}$}
\]

The main goal of this article is to highlight a striking phenomenon: 
the subgroups giving rise to highly transitive actions are located in 
certain very specific pieces $\PK_q$ of the perfect kernel 
(depending on the parameters $m$ and $n$). 
On the contrary, the subgroups of the other pieces never even 
correspond to primitive actions.
The precise statement is as follows:
\begin{theoremintro}
\label{th-intro: HT generic in Ph= 1 and oo}
    Let \( m,n\in\Z \) such that \( \abs{m}\geq 2 \) and \( \abs{n}\geq 2 \), 
	let  \( \Gamma=\BSo(m,n) \). 
	Let \( q \) be an \( (m,n) \)-phenotype and let 
    \[
	\mathcal{HT}_q\coloneqq \{\Lambda\in\PK_q \colon 
					\Lambda\bs\Gamma\curvearrowleft \Gamma\text{ is highly transitive}\}.
	\]
    Then \( \mathcal{HT}_q\) is dense \( G_\delta \) in \( \PK_q  \) exactly when either
    \begin{itemize}
        \item  \( q=1 \),  or
        \item  \( q=\infty \) and  \( \abs{m} \neq \abs{n} \).
    \end{itemize}
Otherwise, \( \mathcal{HT}_q \) is empty, 
and infinite index subgroups of phenotype \(q\)
actually never even correspond to primitive actions.  
\end{theoremintro}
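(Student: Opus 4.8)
\emph{Overall strategy.} The statement has a positive side — $q=1$, or $q=\infty$ with $\abs m\neq\abs n$ — and a negative side — all remaining phenotypes. For the positive side I would use that $\mathcal{HT}_q$ is $G_\delta$ in $\PK_q$ by Lemma~\ref{lem: Sub infty and HT are Gdelta} and is conjugation-invariant, and that the $\BSo(m,n)$-action on $\PK_q$ is topologically transitive by~\cite{CGLMS-22}; the topological zero-one law then shows $\mathcal{HT}_q$ is meager or comeager in the Polish (hence Baire) space $\PK_q$, so it suffices to prove it is \emph{dense} (a dense $G_\delta$ set in a Baire space is comeager). For the negative side, a transitive action with non-maximal point stabiliser is imprimitive, hence not $2$-transitive, hence not highly transitive; so it is enough to show that no infinite-index $\Lambda$ with $\PHE_{m,n}(\Lambda)=q$ is a maximal subgroup of $\Gamma$, which gives both $\mathcal{HT}_q=\emptyset$ and the stronger ``not even primitive'' assertion. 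Together with $\PK_q\neq\emptyset$, the two sides yield the ``exactly when''.

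\emph{Positive side.} I would run the Baire category scheme for high transitivity in the style of Dixon. Expressing high transitivity of $\Lambda\bs\Gamma\reacts\Gamma$ as a countable conjunction of finite ``transitivity tasks'' $\tau$ — each asking that a given $k$-tuple of distinct cosets be moved onto another — one writes $\mathcal{HT}_q=\PK_q\cap\bigcap_\tau O_\tau$, where $O_\tau\coloneqq\{\Lambda\in\PK_q:\Lambda\bs\Gamma\reacts\Gamma\text{ accomplishes }\tau\}$ is relatively open. Everything comes down to the density of each $O_\tau$: a basic clopen piece of $\PK_q$ is a finite list of membership/non-membership constraints on $\Lambda$, equivalently a finite labelled fragment of the Schreier graph $\Lambda\bs\Gamma$ compatible with phenotype $q$ and with the perfect-kernel condition, and one must complete it to a subgroup of $\PK_q$ whose coset action realises the prescribed finite permutation — i.e.\ glue into the fragment one further group element doing the job. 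The hypotheses $q=1$ and ($q=\infty$, $\abs m\neq\abs n$) are precisely what make this possible: for $q=1$ the intersection $\langle b\rangle\cap\Lambda$ may be taken of any index supported on the primes dividing $mn$ (so even $\langle b\rangle\le\Lambda$ is allowed), and for $q=\infty$ with $\abs m\neq\abs n$ the subgroup $\Lambda$ meets no vertex stabiliser of the Bass--Serre tree $T$ while $\Gamma\curvearrowright T$ is of general type with topologically free boundary action — in both regimes there is enough freedom to insert the required element without colliding with the constraints defining $\PK_q$; here I would build on the techniques of~\cite{fima_characterization_HT_2022}. Intersecting the dense open sets $O_\tau$ then gives $\mathcal{HT}_q$ dense $G_\delta$ in $\PK_q$.

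\emph{Negative side.} When $\abs m=\abs n=\ell$ the argument is short: $N\coloneqq\langle b^\ell\rangle$ is normal in $\Gamma$ with quotient $\Gamma/N\cong(\Z/\ell)\ast\Z$; phenotype $\infty$ forces $\langle b\rangle\cap\Lambda=\{1\}$, hence $N\cap\Lambda=\{1\}$ and $\Lambda\subsetneq\Lambda N$, while $\Lambda N\neq\Gamma$ since otherwise $\Lambda\cong\Gamma/N$ would contain an element of order $\ell\geq 2$, contradicting torsion-freeness of $\BSo(m,n)$; thus $\Lambda$ is not maximal. (This also handles the finitely many subgroups in $\Cc_\infty$ of phenotype $\infty$, since it only used $\langle b\rangle\cap\Lambda=\{1\}$.) For finite $q\geq 2$ I would argue that, because every conjugate of $\Lambda$ again has phenotype $q$ and the phenotype divides the index $[\langle b\rangle:\langle b\rangle\cap\Lambda']$, every $b$-orbit in the Schreier graph $\Lambda\bs\Gamma$ has length divisible by $q$; using the description of the pieces $\PK_q$ from~\cite{CGLMS-22}, this divisibility should yield a canonical non-trivial $\Gamma$-invariant coarsening of $\Lambda\bs\Gamma$, equivalently a proper intermediate subgroup $\Lambda\subsetneq\Lambda^+\subsetneq\Gamma$, which is strictly below $\Gamma$ because it still has phenotype $q$ and still lies in the perfect kernel. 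Manufacturing this $\Lambda^+$ for \emph{every} $\Lambda\in\PK_q$ — not only for the model representatives used to establish topological transitivity in~\cite{CGLMS-22} — and verifying that it is strictly intermediate, is the step I expect to be the main obstacle, the delicate point being exactly the interplay between $q$ and the primes dividing $m$ and $n$ already built into the definition of the phenotype.
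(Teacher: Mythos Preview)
Your architecture matches the paper's: Baire category for the positive side (this is exactly the paper's Lemma~\ref{lem: HT and Baire}), and an invariant nontrivial equivalence relation for the negative side. But the two steps you yourself flag as uncertain are precisely where the content lies, and your sketch does not supply them.

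\emph{Positive side, $q=1$.} Deferring to \cite{fima_characterization_HT_2022} is legitimate for $q=\infty$ with $\abs m\neq\abs n$ --- the paper itself notes this case was implicit there, and recovers it as Corollary~\ref{cor: htt from ht and tt for BS}. It does nothing for $q=1$: those techniques require trivial intersection with vertex stabilisers, which is the opposite of phenotype~$1$. The paper's argument here is new. One restricts to a finite pre-action, passes to its \emph{maximal} forest saturation, and uses Lemma~\ref{lem: leaving the seed uniformly} to push all $2d$ basepoints out of the seed along a common word into the forest part; then --- the key idea --- one switches to the \emph{minimal} forest saturation and follows negative edges until all labels drop to~$1$ (Claim~\ref{claim: arriving at labels 1}). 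Only once the $2d$ images sit in distinct singleton $b$-orbits can one weld in a new orbit realising the prescribed permutation. Your ``enough freedom to insert the required element'' hides all of this, and your observation about which indices $[\langle b\rangle:\langle b\rangle\cap\Lambda]$ are allowed does not by itself produce the gluing.

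\emph{Negative side, finite $q\geq 2$.} You correctly intuit that divisibility of every $b$-orbit length by $q$ should give an invariant coarsening, but you do not name it. The paper's answer: partition each $b$-orbit into its $b$-suborbits of cardinality $\PHEred(\alpha)\coloneqq q/\gcd(q,m)=q/\gcd(q,n)$ (the two expressions agree because every prime dividing $q$ satisfies $\abs m_p=\abs n_p$ by definition of the phenotype, and $\PHEred(\alpha)>1$ because $\abs q_p>\abs m_p$ for such primes). Invariance under $b$ is automatic; invariance under $t$ is Proposition~\ref{Prop: phenotypical relation is invariant}, a short B\'ezout computation showing that a suborbit of this size is already a $\langle b^n\rangle$-orbit, hence carried by $t$ to a $\langle b^m\rangle$-orbit. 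This is exactly the ``interplay between $q$ and the primes dividing $m$ and $n$'' you anticipate; once named, it is a few lines.

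\emph{One place you do differently, and well.} For $q=\infty$ with $\abs m=\abs n=\ell$, your argument via the intermediate subgroup $\Lambda N$ with $N=\langle b^\ell\rangle$ is correct and arguably tidier than the paper's Proposition~\ref{prop: no primitive inf phen n=m} (which exhibits the $b^m$-orbit partition directly). The torsion argument for $\Lambda N\neq\Gamma$ is valid since $\BSo(m,n)$ is torsion-free, and it indeed also covers $\Lambda\in\mathcal C_\infty$.
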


Since it is more enjoyable, in connection with high transitivity, to consider faithful actions, 
we use the results of \cite{CGLMS-22} to localize the subgroups that give rise to faithful actions.

\begin{propositionintro}[Proposition~\ref{prop: faithfulness according to phenotype}]
\label{prop-intro: faithfulness according to phenotype}
The set of faithful actions forms a dense \( G_\delta \) set inside $\PK_q$ for every phenotype $q$ when $\abs{m} \neq \abs{n}$, or for $q=\infty$ and $\abs{m} = \abs{n}$; otherwise, $\PK_q$ contains no faithful action.

Moreover, if $\Lambda$ belongs to $\mathcal{C}_\infty$, 
then the action $\Lambda\bs \BSo(m,n)\curvearrowleft \BSo(m,n)$ is not faithful.
\end{propositionintro}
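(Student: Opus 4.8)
The plan is to establish the three assertions separately — the dense $G_\delta$ statement, the absence of faithful actions in $\PK_q$ when $\abs m=\abs n$ and $q<\infty$, and the claim about $\mathcal{C}_\infty$ — the common thread being to control which normal subgroups of $\Gamma=\BSo(m,n)$ are contained in a given $\Lambda$. For the dense $G_\delta$ part, set $\Fc_q\coloneqq\{\Lambda\in\PK_q:\Lambda\bs\Gamma\reacts\Gamma\text{ is faithful}\}$ and, for $\gamma\in\Gamma\smallsetminus\{\id\}$, let $\Oc_\gamma\coloneqq\{\Lambda\in\PK_q:\exists g\in\Gamma,\ g\inv\gamma g\notin\Lambda\}$ be the set of subgroups whose normal core avoids $\gamma$. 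Then $\Fc_q=\bigcap_{\gamma\neq\id}\Oc_\gamma$; each $\Oc_\gamma$ is open (a union of subbasic clopen sets $\{\Lambda:h\notin\Lambda\}$), so $\Fc_q$ is $G_\delta$, and each $\Oc_\gamma$ is $\Gamma$-invariant because the normal core is a conjugation invariant. Since $\Gamma\acts\PK_q$ is topologically transitive by \cite{CGLMS-22}, the topological zero--one law \cite[Theorem~8.46]{kechris_classical_1995} forces each $\Oc_\gamma$ to be meager or comeager; an open set is non-meager once nonempty, and $\Oc_\gamma$ is nonempty as soon as $\PK_q$ contains one faithful subgroup, which then lies in every $\Oc_\gamma$. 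So it remains, in the relevant cases, to exhibit one faithful subgroup in $\PK_q$ (whence every $\Oc_\gamma$ is comeager, so $\Fc_q$ is comeager, hence dense $G_\delta$), and, in the other cases, to rule out all of $\PK_q$.

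The key input, via Britton's lemma, is that a normal subgroup of $\Gamma$ inside $\la b\ra$ equals some $\la b^N\ra$, and that $\la b^N\ra\trianglelefteq\Gamma$ with $N\geq1$ happens exactly when $m\mid n$ and $n\mid m$, i.e.\ $\abs m=\abs n=:k$ and $k\mid N$; in particular $\mathrm{core}_\Gamma(\la b\ra)=\{\id\}$ when $\abs m\neq\abs n$. Now: if $q=\infty$, the trivial subgroup works — it is faithful, of phenotype $\infty$, and in $\PK$ since $\{\id\}\bs\Gamma/\la b\ra=\Gamma/\la b\ra$ is infinite. If $\abs m\neq\abs n$ and $q<\infty$, choose $c\geq1$ with $\PHE_{m,n}(\la b^c\ra)=q$, possible because $\PHE_{m,n}$ of a subgroup depends only on the index $[\la b\ra:\la b\ra\cap\,\cdot\,]$, equal to $c$ for $\la b^c\ra$, and every finite phenotype is attained; then $\la b^c\ra\in\PK$ (being contained in $\ker(\Gamma\to\Z,\ b\mapsto0,\ t\mapsto1)$, its double quotient surjects onto $\Z$), so $\la b^c\ra\in\PK_q$, and $\mathrm{core}_\Gamma(\la b^c\ra)\leq\mathrm{core}_\Gamma(\la b\ra)=\{\id\}$. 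Conversely, if $\abs m=\abs n=k$ and $q<\infty$, every $\Lambda\in\PK_q$ has $\la b\ra\cap\Lambda=\la b^d\ra$ with $d\geq1$, so $\la b^{kd}\ra$ is a nontrivial normal subgroup of $\Gamma$ contained in $\Lambda$, whence $\mathrm{core}_\Gamma(\Lambda)\neq\{\id\}$ and $\PK_q$ contains no faithful action.

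For the statement on $\mathcal{C}_\infty$, let $\Lambda\in\mathcal{C}_\infty$; then $\abs m=\abs n=:k$ (else $\mathcal{C}_\infty=\emptyset$), $\Lambda$ has phenotype $\infty$ so $\la b\ra\cap\Lambda=\{\id\}$, and $\Lambda\bs\Gamma/\la b\ra$ is finite. The normal subgroup $A\coloneqq\la b^k\ra=\la b^m\ra=\la b^n\ra$ is the kernel of the $\Gamma$-action on the Bass--Serre tree $\Tree$ and has index $k$ in every vertex group $g\la b\ra g\inv\cong\Z$; since a nontrivial subgroup of $\Z$ meets any finite-index subgroup, $\la b\ra\cap\Lambda=\{\id\}$ forces $g\la b\ra g\inv\cap\Lambda=\{\id\}$ for all $g$, so $\Lambda$ acts freely on $\Tree$, and cocompactly as $\Lambda\bs\Tree$ is finite. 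Hence $\Lambda\bs\Tree$ is a finite connected $2k$-regular graph with $k\geq2$, so $\Lambda$ is free of rank $r\geq k\geq2$. Moreover $\Lambda A$ has finite index in $\Gamma$: it contains $A$, and $\Lambda A/A\cong\Lambda$ acts freely cocompactly on $\Tree$ while the vertex stabilizers in $\Gamma/A$ have order $k$, so a double-coset count bounds $[\Gamma/A:\Lambda A/A]$. Put $M\coloneqq\mathrm{core}_\Gamma(\Lambda A)$, a finite-index normal subgroup of $\Gamma$ containing $A$, and $L\coloneqq M\cap\Lambda$; from $A\leq M\leq\Lambda A=A\rtimes\Lambda$ one gets $M=A\rtimes L$, with $L$ of finite index in $\Lambda$, hence free of rank $\geq2$. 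Since $L$ acts on $A\cong\Z$ through $\{\pm1\}$, its commutator subgroup $[L,L]$ centralizes $A$ and $[M,M]$ is an internal direct product of $[L,L]$ with a subgroup of $A$ (trivial when $m=n$); therefore the second derived subgroup $M''$ equals $([L,L])'$, which lies in $L\leq\Lambda$. Being characteristic in $M\trianglelefteq\Gamma$, $M''$ is normal in $\Gamma$, and it is nontrivial because $[L,L]$ is a nonabelian free group. Thus $\mathrm{core}_\Gamma(\Lambda)\supseteq M''\neq\{\id\}$, so $\Lambda\bs\BSo(m,n)\reacts\BSo(m,n)$ is not faithful.

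The main obstacle is this last part: a subgroup in $\mathcal{C}_\infty$ is a "small", infinite-index free subgroup with, a priori, no $\Gamma$-normal subgroup inside it, and the crux is that its cocompact free action on $\Tree$ is rigid enough to force it to contain the second derived subgroup of the finite-index normal subgroup it spans together with $\la b^k\ra$; extracting that rigidity (and the consequent freeness and finite covolume) is where one leans hardest on the structural results of \cite{CGLMS-22}.
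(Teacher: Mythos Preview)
Your proof is correct but follows a genuinely different route from the paper's. For the dense $G_\delta$ part, the paper upgrades to the stronger notion of \emph{high faithfulness} (the orbit of $\Lambda$ accumulates on $\{\id\}$), then invokes from \cite{CGLMS-22} that dense orbits exist in each $\PK_q$ and that $\{\id\}$ lies in $\overline{\PK_q}$ when $\abs m\neq\abs n$; you instead run the zero--one law on each individual open invariant set $\Oc_\gamma$ and exhibit a single faithful point (the trivial group, or $\la b^c\ra$). Your argument is more self-contained---it needs only topological transitivity from \cite{CGLMS-22}, not the finer closure statements---while the paper's yields the stronger conclusion that \emph{highly} faithful actions are dense $G_\delta$. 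For $\mathcal C_\infty$, the contrast is sharper: the paper observes that $\Gamma$ acts on the finite disjoint union of $b^m$-orbit lines through the amenable group $(\Z\rtimes\Z/2\Z)^k\rtimes\Sym(k)$, whence the kernel is nontrivial since $\Gamma$ is non-amenable; you instead exploit that $\Lambda$ is free of rank $\geq 2$ acting cocompactly on $\Tree$, build the finite-index normal $M=A\rtimes L$, and extract the explicit nontrivial normal subgroup $M''=L''\leq\Lambda$. Your approach is constructive and identifies a concrete element of the core, at the cost of a longer structural argument; the amenability trick is slicker but less explicit.
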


Theorem \ref{th-intro: HT generic in Ph= 1 and oo} is
proved in Section \ref{Sec: HT}.
We summarize the results of Theorem~\ref{th-intro: HT generic in Ph= 1 and oo} and Proposition~\ref{prop-intro: faithfulness according to phenotype} about high transitivity and faithfulness in $\PK_q$ in the following tables. 
\begin{center}
\begin{tabular}{ll}
Case $\abs{m} \neq \abs{n}$
&
\begin{tabular}{|c||c|c|}\hline
 \emph{Phenotypes}& \emph{High transitivity} & \emph{Faithfulness} \\
\hline\hline
  $q=1$ & dense \( G_\delta \) & dense \( G_\delta \) \\
  \hline
  $q=\infty$ & dense \( G_\delta \) & dense \( G_\delta \)\\
  \hline
  \emph{other phenotypes} & $\emptyset$  & dense \( G_\delta \)\\
  \hline
  \end{tabular}
\\
\\
 Case $\abs{m} = \abs{n} $ 
 &
\begin{tabular}{|c||c|c|}\hline
 \emph{Phenotypes}& \emph{High transitivity} & \emph{Faithfulness} \\
\hline\hline
  $q=1$ & dense \( G_\delta \) & $\emptyset$ \\
  \hline
  $q=\infty$ & $\emptyset$  & dense \( G_\delta \)\\
  \hline
  \emph{other phenotypes} & $\emptyset$ &  \( \emptyset \)\\
  \hline
  \end{tabular}
  \end{tabular}
\end{center}  
\medskip
These tables can be completed by mentioning that
if one concentrates on 
$\Sub_{[\infty]}(\BSo(m,n))$ instead of $\PK(\BSo(m,n))$,
we should add in the case $\abs{m}=\abs{n}$
a line for $\mathcal C_{\infty}$,
whose elements are never highly transitive nor faithful.

High transitivity admits a topological version which is an enrichment of topological transitivity.
Namely, an action $X\curvearrowleft \Gamma$ on an infinite perfect Polish space $X$,
is called \defin{highly topologically transitive} when the diagonal action $X^d\curvearrowleft \Gamma$ is topologically transitive for every \(d\geq 1\). 

We have established in \cite{CGLMS-22} the topological transitivity of the action of $\BSo(m,n)$ on each \(\PK_q\). We now upgrade this to high topological transitivity.

\begin{theoremintro}
	\label{th-intro: htt}
	Let \( m,n \) be integers such that \({\abs m}, \abs n\geq 2 \). 
	Then for every phenotype \(q\in \QQ_{m,n}\) 
	the action by conjugation of \( \BSo(m,n)  \) on the invariant subspace 
	\( \PK_q(\BSo(m,n))  \) is highly topologically transitive.
\end{theoremintro}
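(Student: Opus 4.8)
The plan is to leverage the topological transitivity of $\BSo(m,n)\acts\PK_q$ established in \cite{CGLMS-22} and promote it to the diagonal actions $\PK_q^d$ by exhibiting a rich supply of open sets whose joint "translates" can be forced to overlap. The standard criterion is: an action on a perfect Polish space is highly topologically transitive iff for every $d$ and every $d$-tuple of nonempty open sets $U_1,\dots,U_d$ and every $d$-tuple $V_1,\dots,V_d$, there is a single group element $\gamma$ with $\gamma U_i\cap V_i\neq\emptyset$ for all $i$ simultaneously. Because $\Sub(\Gamma)$ has a basis of clopen sets indexed by finite amounts of data (which elements of a fixed finite set $F\subseteq\Gamma$ lie in $\Lambda$ and which do not), it suffices to work with such basic clopen sets. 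So the real task is combinatorial: given $d$ basic constraints, produce a subgroup $\Lambda\in\PK_q$ and an element $\gamma\in\Gamma$ such that $\Lambda$ satisfies the $i$-th "source" constraint and $\gamma\inv\Lambda\gamma$... — better, work on the side of Schreier graphs / actions.

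First I would recast everything in terms of the Bass-Serre/Schreier-graph machinery already developed in \cite{CGLMS-22}: a subgroup $\Lambda\in\PK_q$ corresponds to a transitive action of $\Gamma$ on a countable set with a marked point, equivalently a pointed Schreier graph decorated by the BS-relations, whose "$b$-orbit structure at the base vertex" records the phenotype $q$. A basic clopen neighbourhood constrains a finite ball of this pointed graph. The key construction from the first paper is an amalgamation/grafting procedure: given finitely many finite partial Schreier graphs of phenotype $q$, one can glue them into a single infinite Schreier graph of phenotype $q$ lying in $\PK_q$, and one can realize a prescribed finite piece near the basepoint while having complete freedom far away. To get high topological transitivity, I would take $d$ target neighbourhoods, realized by finite pointed partial graphs $T_1,\dots,T_d$, and $d$ source neighbourhoods realized by finite pointed partial graphs $S_1,\dots,S_d$; then build one subgroup $\Lambda\in\PK_q$ together with $d$ elements $\gamma_1,\dots,\gamma_d\in\Gamma$ (thought of as vertices in the Schreier graph of $\Lambda$) such that the ball around the basepoint looks like $S_i$ and the ball around $\gamma_i$ looks like $T_i$. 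This is possible because the $\gamma_i$ can be chosen "spread out" in an infinite graph, so the $2d$ prescribed finite balls are pairwise disjoint and the grafting lemma applies. Then a single $\gamma$ moving basepoint to $\gamma_i$ does \emph{not} exist in general — instead, one uses that $\Lambda$ and $\gamma_i\inv\Lambda\gamma_i$ both lie in the source and target clopen sets respectively, which is exactly the statement that $\gamma_i\cdot\Lambda$ is in the target set while $\Lambda$ is in the source set; but we need a \emph{common} $\gamma$.

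The correct device, and the main obstacle, is therefore to get the \emph{same} $\gamma$ to work for all $d$ coordinates at once. Here the trick is to pass to a "diagonal" Schreier graph: realize the tuple $(U_1,\dots,U_d)$ as the neighbourhood, in $\PK_q^d$, of a tuple $(\Lambda_1,\dots,\Lambda_d)$, and similarly $(V_1,\dots,V_d)$ near $(\Lambda_1',\dots,\Lambda_d')$; we must find $\gamma$ with $\gamma\Lambda_i\gamma\inv$ close to $\Lambda_i'$ for every $i$. I would build a single large Schreier graph containing $2d$ far-apart "bubbles" realizing the finite balls of $\Lambda_1,\dots,\Lambda_d,\Lambda_1',\dots,\Lambda_d'$, connect the $\Lambda_i$-bubble to the $\Lambda_i'$-bubble by a long geodesic path spelling out a word $w_i$ in $b,t$, and arrange the \emph{same} connecting word $w=w_1=\dots=w_d$ for every $i$ — which is feasible because in a free product / tree-like Schreier graph one has enough room to route all $d$ connecting paths with a common label, using the genericity of transitive BS-actions of phenotype $q$ and the fact (from \cite{CGLMS-22}) that any finite labelled partial graph of the right phenotype embeds in some element of $\PK_q$. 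Then $\gamma=w$ conjugates each $\Lambda_i$-piece onto each $\Lambda_i'$-piece, giving $\gamma\cdot(\Lambda_1,\dots,\Lambda_d)\in V_1\times\cdots\times V_d$ while $(\Lambda_1,\dots,\Lambda_d)\in U_1\times\cdots\times U_d$, as desired. The hard part is verifying that the grafting lemma of \cite{CGLMS-22} can be applied with the required uniformity in the connecting word and that the resulting graph genuinely has phenotype $q$ and lies in the perfect kernel (i.e. the double quotient by $\la b\ra$ stays infinite); both points should follow from the constructions there, but checking the phenotype is preserved under the simultaneous grafting — especially in the delicate case $\abs m=\abs n$ where $\PK_q$ is clopen and the $b$-orbit combinatorics at every vertex are rigid — is where the argument needs genuine care.
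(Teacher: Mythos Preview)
Your proposal correctly identifies the main obstacle --- a single $\gamma$ must work in all $d$ coordinates simultaneously --- and correctly reduces it to a Schreier-graph grafting problem. But you leave the central step as an assertion: ``one has enough room to route all $d$ connecting paths with a common label'' is precisely where the work lies, and the grafting lemmas of \cite{CGLMS-22} do not supply this uniformity off the shelf. The paper's mechanism is a dedicated result (Lemma~\ref{lem: leaving the seed uniformly}): given finitely many pointed finite pre-actions, there is a single reduced path $c$ in the Bass-Serre tree $\Tree$ whose projection to the maximal forest saturation of \emph{each} pre-action escapes the seed into the forest part, with controlled terminal label. Its proof is an induction on the number of pre-actions, resting on the observation (Claim~\ref{claim: no backtracking once in Fi}) that once a reduced path enters the forest it can never backtrack out. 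Without this lemma you have no way to manufacture the common connecting word.

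There is also a structural discrepancy. You propose embedding all $2d$ bubbles into \emph{one} Schreier graph; the paper instead builds $d$ \emph{separate} actions $\tilde\eta_1,\dots,\tilde\eta_d$, where $\tilde\eta_i$ welds only the $i$-th and $(i{+}d)$-th bubbles. This is both simpler and sufficient, since the witnesses $[\tilde\eta_i,x_i]\in U_i$ and $[\tilde\eta_i,x_{i+d}]\in U_{i+d}$ for different $i$ need not come from the same action. Finally, in finite phenotype the welding requires the two sides to meet at vertices of matching label; the paper achieves this via an additional ``U-turn'' along negative edges until every terminal label equals the phenotype $q$ (Step~4bis and Claim~\ref{claim: lij is stationary}). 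This label-matching issue arises for \emph{every} finite $q$, not only when $\abs m=\abs n$, and is absent from your sketch.
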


Theorem \ref{th-intro: HT generic in Ph= 1 and oo} was
implicitly proved in \cite[Theorem~4.4]{fima_characterization_HT_2022}
in the case $\PK_{\infty}(\BSo(m,n))$ for \(\abs{m}\neq\abs{n}\).
We adapt these techniques and develop a unified framework (see in particular Lemma \ref{lem: leaving the seed uniformly})
towards establishing high transitivity
or high topological transitivity.
This framework is based on pre-actions and their maximal forest saturations, as in our first paper \cite{CGLMS-22}.
A notable difference is that we often have to stay at the level of pre-actions instead of 
working with the more flexible \((m,n)\)-graphs.
We clarify the definition of maximal forest saturations by showing that 
each pre-action admits a \emph{unique} maximal forest saturation
(this result is a special case of Theorem \ref{thm: forest saturation ruled}).
We also identify precisely the corresponding stabilizer subgroup
(see Corollary \ref{cor: maximal forest sat stabilizer}).

We would also like to highlight the work of Sasha Bontemps \cite{Bontemps-pkGBS}, who extended 
the main results of \cite{CGLMS-22} to Generalized Baumslag-Solitar groups.
She discovered the right notion of phenotype in this wider context and obtained a description analogous to ours of the perfect kernel.
Her phenotype yields a decomposition of the perfect kernel into invariant pieces where she shows the action is highly topologically transitive, thus obtaining a natural generalization of
Theorem~\ref{th-intro: htt}.

Let us finally mention a result of independent interest that relates high transitivity to its topological counterpart:
\begin{theoremintro}[HT + TT $\implies$ HTT, 
Theorem~\ref{thm: HT + TT implies HTT}]\label{th-intro: ht + tt impl htt}
For any countable group $\Gamma$, 
if  $\mathcal P\subseteq \Sub(\Gamma)$ is a $\Gamma$-invariant 
$G_\delta$ subset such that 
\begin{enumerate}[label=(\arabic*)] 
	\item $\mathcal{HT}(\Gamma) \cap \mathcal P$ is dense in $\mathcal P$, and
	\item the $\Gamma$-action on $\mathcal P$ is topologically transitive,
\end{enumerate}
then  the $\Gamma$-action on $\mathcal P$ is highly topologically transitive.
\end{theoremintro}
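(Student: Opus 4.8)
The plan is to fix \(d\ge 1\) and produce a single point of \(\mathcal P^d\) whose \(\Gamma\)-orbit is dense; since a dense orbit trivially forces topological transitivity and \(d\) is arbitrary, this yields high topological transitivity of \(\mathcal P\curvearrowleft\Gamma\). Throughout we use that \(\mathcal P\), being \(G_\delta\) in the Polish space \(\Sub(\Gamma)\), is Polish, and — as is built into the very notion of high topological transitivity — we take \(\mathcal P\) to be perfect.

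First I would select a good base point. Since \(\mathcal{HT}(\Gamma)\) is \(G_\delta\) (Lemma~\ref{lem: Sub infty and HT are Gdelta}), the set \(\mathcal{HT}(\Gamma)\cap\mathcal P\) is \(G_\delta\) and, by~(1), dense in \(\mathcal P\), hence comeager. By~(2) and the usual Baire-category argument, the set of subgroups of \(\mathcal P\) with dense \(\Gamma\)-orbit is a dense \(G_\delta\), hence also comeager. Choosing \(\Lambda_0\) in the intersection of these two comeager sets, we obtain a subgroup with \(\Gamma\cdot\Lambda_0\) dense in \(\mathcal P\) and \(\Lambda_0\bs\Gamma\curvearrowleft\Gamma\) highly transitive. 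It is convenient to transport the latter property to the conjugation picture through the \(\Gamma\)-equivariant map \(\beta\colon\Gamma/\Lambda_0\to\Gamma\cdot\Lambda_0\), \(g\Lambda_0\mapsto g\Lambda_0 g\inv\), which is onto and well defined (constant on left cosets); high transitivity of \(\Lambda_0\bs\Gamma\) then says precisely that for any two \(d\)-tuples of pairwise distinct cosets \((x_i)_i\) and \((y_i)_i\) of \(\Gamma/\Lambda_0\) there is \(\gamma\in\Gamma\) with \(\gamma x_i=y_i\) for all \(i\).

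Now fix once and for all pairwise distinct cosets \(g_1\Lambda_0,\dots,g_d\Lambda_0\) (available since \([\Gamma:\Lambda_0]=\infty\)) and set \(\mathbf\Lambda_0=(g_1\Lambda_0g_1\inv,\dots,g_d\Lambda_0g_d\inv)\in\mathcal P^d\), each coordinate lying in \(\mathcal P\) by \(\Gamma\)-invariance. To see that \(\Gamma\cdot\mathbf\Lambda_0\) is dense, take a nonempty basic open set \(\mathcal U_1\times\cdots\times\mathcal U_d\). For each \(i\), density of \(\Gamma\cdot\Lambda_0\) together with perfectness of \(\mathcal P\) makes \(\mathcal U_i\cap\Gamma\cdot\Lambda_0\) infinite, so \(\beta\inv(\mathcal U_i)\) meets infinitely many left cosets; hence, choosing them one at a time, we can pick \(c_i\) with \(\beta(c_i)\in\mathcal U_i\) and with the cosets \(c_1\Lambda_0,\dots,c_d\Lambda_0\) pairwise distinct. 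Applying high transitivity to carry \((g_i\Lambda_0)_i\) onto \((c_i\Lambda_0)_i\) gives \(\gamma\in\Gamma\) with \(\gamma g_i\Lambda_0=c_i\Lambda_0\) for all \(i\), whence \(\gamma\cdot(g_i\Lambda_0g_i\inv)=(\gamma g_i)\Lambda_0(\gamma g_i)\inv=\beta(\gamma g_i)=\beta(c_i)\in\mathcal U_i\); that is, \(\gamma\cdot\mathbf\Lambda_0\in\mathcal U_1\times\cdots\times\mathcal U_d\). This proves density and hence the theorem.

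The delicate point is the selection of the intermediate cosets \(c_i\): one needs them to have prescribed images \(\beta(c_i)\in\mathcal U_i\) while being pairwise distinct so that high transitivity can be invoked, and this is exactly where perfectness of \(\mathcal P\) is indispensable — without it, \(\mathcal P\) could reduce to a single discrete orbit, in which case \(\mathcal P^2\) splits into two \(\Gamma\)-orbits and the conclusion fails. The rest — the Baire-category choice of \(\Lambda_0\), the dictionary \(\beta\) between the coset action and the conjugation action, and the closing computation — is routine.
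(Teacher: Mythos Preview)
Your approach is close in spirit to the paper's --- both exploit that a highly transitive action, viewed as a point of \(\Sub(\Gamma)\), lets one move its conjugates around freely --- but you organize things differently: you first use Baire to fix once and for all a single \(\Lambda_0\) that is both HT and has dense conjugation orbit, then show \((g_i\Lambda_0 g_i^{-1})_i\) has dense diagonal orbit in \(\mathcal P^d\). The paper instead, given pairwise disjoint \(U_1,\dots,U_{2d}\), uses topological transitivity directly to produce \(g_i\) with \(\bigcap_i U_i\cdot g_i^{-1}\neq\emptyset\), and only then picks an HT point in that intersection by density; no Baire step and no universal \(\Lambda_0\) are needed.

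There is, however, a genuine gap. Perfectness of \(\mathcal P\) is \emph{not} a hypothesis of the theorem, and it is not ``built into'' the paper's Definition~\ref{def: htt action}, which is formulated via \(X^{(d)}\) rather than \(X^d\) precisely so that HTT makes sense on non-perfect spaces (see Lemma~\ref{lem: HTT vs HTT}). You invoke perfectness in an essential way to guarantee that \(\beta^{-1}(\mathcal U_i)\) is infinite, and your concluding claim that the theorem itself fails without perfectness is incorrect: in your discrete-orbit example, \(\mathcal P^2\) does split into the diagonal and the off-diagonal, but HTT only asks for topological transitivity on \(\mathcal P^{(2)}\), the off-diagonal, and the paper's proof establishes exactly that without any perfectness assumption. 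The fix is easy and in fact brings your argument closer to the paper's: work in \(\mathcal P^{(d)}\) rather than \(\mathcal P^d\), i.e.\ take the \(\mathcal U_i\) pairwise disjoint (cf.\ Remark~\ref{rmk: disjoint version for htt}). Then the preimages \(\beta^{-1}(\mathcal U_i)\) are pairwise disjoint in \(\Gamma/\Lambda_0\), so any choices \(c_i\in\beta^{-1}(\mathcal U_i)\) are automatically distinct cosets, and the appeal to perfectness disappears entirely.
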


\paragraph{Acknowledgements.} 
    The authors are deeply grateful to Alessandro
    Carderi for more mathematical conversations
    than they can count. We will miss him in our future projects and we wish him the greatest success in his new adventures.
    We are thankful to Sasha Bontemps for useful remarks on a preliminary version
    of our work.
   
    The authors also acknowledge funding by the LABEX MILYON (ANR-10-LABX-0070) of Université de Lyon, within the program “Investissements d’Avenir” (ANR-11-IDEX-0007) operated by the French National Research Agency (ANR).
	D.~G.\ is supported by the CNRS.
	D.~G.\ and F.~L.M.\ acknowledge funding by the ANR project PLAGE (ANR-24-CE40-3137).

\section{General results on high transitivity and its topological counterpart}

This section collects some results on high transitivity
which are not specific to Baumslag-Solitar groups. 
Most of them are well-known, but Theorem \ref{thm: HT + TT implies HTT}
appears to be new. 
We also propose a slight modification of the usual definition of high topological transitivity 
so that it encompasses high transitivity (see Lemma \ref{lem: HTT vs HTT} for the comparison
with the definition used in \cite{azuelosPerfectKernelDynamics2023}).

\subsection{Preliminaries on high (topological) transitivity}

We begin by recalling the definition of high transitivity.
To this end, recall that given a right action \( \alpha \) of a group \( \Gamma \)
on a set \(X\) with at least \(d\in\N\) elements, we have a natural diagonal action \(\alpha^{(d)}\)
on the set \(X^{(d)}\subseteq X^d\) 
made of \(d\)-tuples consisting of pairwise distinct elements of \(X\).
We say then say that \( \alpha \) is \(d\)\defin{-transitive} when \(\alpha^{(d)}\) is transitive. 

\begin{definition}\label{def: HT}
	A right action \( \alpha \) of a group \( \Gamma \) on an infinite set \(X\) is called
	\defin{highly transitive (HT)} when it is \(d\)-transitive for every \(d\in\N\), namely:
	for every \(x_1,\dots,x_d\in X\) pairwise distinct and 
	\(y_1,\dots,y_d\in X\) pairwise distinct, there is \(\gamma\in\Gamma\) such that for 
	all \( i\in\{1,\dots,d\} \), we have \(x_i\alpha(\gamma)=y_i\).
\end{definition}

We will make use of the following two well-known lemmas. The first says that we may as well 
assume \(x_i\neq y_j\) for all \(i,j\in\{1,\dots,d\}\) in the above definition.
\begin{lemma}[{\cite[Lem.~2.2]{fima_characterization_HT_2022}}]
	\label{lem: ht with all distinct}
	Let \( \alpha \) be a right \( \Gamma \)-action on an infinite set \(X\),
	then the action is highly transitive iff for all \(d\in\N\) 
	and all \(x_1,\dots,x_{2d}\in X\) pairwise distinct, 
	there is \( \gamma\in\Gamma \) such that 
	\[
	\forall i\in\{1,\dots,d\}, \quad x_i \alpha(\gamma)=x_{i+d}.
	\]
\end{lemma}

\begin{lemma}[{\cite[Lem.~1.4(2)]{moonHighlyTransitiveActions2013}}]
	\label{lem: ht passes to normal subgroups}
	Let \( \alpha \) be a highly transitive \( \Gamma \)-action on an infinite set \(X\).
	If $N$ is a normal subgroup of \( \Gamma \), then the restriction of \( \alpha \) to \(N\) is either 
	trivial or highly transitive.
\end{lemma}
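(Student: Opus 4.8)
The plan is to prove, by induction on $d\geq 1$, the following more precise statement: if the restriction of $\alpha$ to $N$ is non‑trivial, then $N$ acts $d$‑transitively on $X$; letting $d$ range over $\N$ this is exactly high transitivity of $N$. As a harmless first move I would replace $\Gamma$ by $\Gamma/\ker\alpha$ and $N$ by its image: this changes neither the hypothesis (the image of $N$ still acts non‑trivially and highly transitively) nor the conclusion, so I may assume $\alpha$ faithful, in which case ``a subgroup of $\Gamma$ acts trivially'' just means ``the subgroup is trivial''. For the base case $d=1$: since $\Gamma$ is transitive and $N$ is normal, every $\gamma\in\Gamma$ carries the $N$‑orbit of a point $x$ bijectively onto the $N$‑orbit of $x\alpha(\gamma)$, so all $N$‑orbits have the same cardinality, which is $\geq 2$ because $N\neq 1$. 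If $N$ had at least two orbits, I would take distinct $x_1,x_2$ in one orbit $O$ and a point $y$ in another; by $2$‑transitivity of $\Gamma$ some $\gamma$ fixes $x_1$ and sends $x_2$ to $y$, but then $\gamma$ preserves $O$ setwise, contradicting $x_2\alpha(\gamma)=y\notin O$. Hence $N$ is transitive.

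For the inductive step $d\to d+1$, assume $N$ acts $d$‑transitively. Fix distinct $x_1,\dots,x_d\in X$, set $Y=X\smallsetminus\{x_1,\dots,x_d\}$ (infinite), let $H=\Gamma_{(x_1,\dots,x_d)}$ be the pointwise stabiliser, and $M=N\cap H=N_{(x_1,\dots,x_d)}\trianglelefteq H$; since $\Gamma$ is $k$‑transitive for every $k$, the group $H$ acts highly transitively on $Y$. If $M$ acts non‑trivially on $Y$, then the base case applied to the $H$‑action on $Y$ and the normal subgroup $M$ gives that $M$ is transitive on $Y$; doing this for every choice of the $d$‑tuple and combining with $d$‑transitivity of $N$ in the standard two‑step way — first use $d$‑transitivity of $N$ to move $(x_1,\dots,x_d)$ onto the first $d$ target points, then use transitivity of the stabiliser in $N$ of those target points on the remaining points to correct the last coordinate — yields $(d+1)$‑transitivity of $N$. (Lemma~\ref{lem: ht with all distinct} lets one assume all points involved are distinct, which streamlines the combination slightly but is not essential.)

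So everything reduces to the claim that $M=N_{(x_1,\dots,x_d)}$ acts non‑trivially on $Y$, and I expect \emph{this} to be the real obstacle, because one has to rule out the exceptional ``affine‑type'' configurations. Suppose $M$ acts trivially on $Y$; by faithfulness $M=1$. Using $e$‑transitivity of $N$ for $e\leq d$, the triviality of $N_{(z_1,\dots,z_e)}$ is independent of the chosen distinct $z_i$, and it fails for $e=0$ since $N\neq 1$; so I would let $e^{\ast}\in\{1,\dots,d\}$ be minimal with $N_{(z_1,\dots,z_{e^{\ast}})}=1$, and set $L=N_{(z_1,\dots,z_{e^{\ast}-1})}$ (read $L=N$ when $e^{\ast}=1$). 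Then $L\neq 1$, $L$ fixes $z_1,\dots,z_{e^{\ast}-1}$ pointwise (hence acts non‑trivially on $Z:=X\smallsetminus\{z_1,\dots,z_{e^{\ast}-1}\}$), and the stabiliser in $L$ of one further point is trivial by minimality of $e^{\ast}$. Since $\Gamma_{(z_1,\dots,z_{e^{\ast}-1})}$ acts highly transitively on the infinite set $Z$ with $L$ as a non‑trivial normal subgroup, the base case gives $L$ transitive on $Z$, hence — having trivial point stabilisers — regular on $Z$.

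It remains to exclude this, for which I would prove the sub‑lemma: a group $P$ acting $4$‑transitively on an infinite set $Z$ has no non‑trivial regular normal subgroup $L$. Fix $z\in Z$; the orbit map $L\smallsetminus\{1\}\to Z\smallsetminus\{z\}$, $\ell\mapsto z\alpha(\ell)$, is a bijection intertwining the conjugation action of $P_z$ on $L\smallsetminus\{1\}$ with the action of $P_z$ on $Z\smallsetminus\{z\}$; since $P$ is $4$‑transitive, $P_z$ is $3$‑transitive on $Z\smallsetminus\{z\}$, hence $3$‑transitive on $L\smallsetminus\{1\}$ by conjugation. Choosing $a,b\in L\smallsetminus\{1\}$ with $a\neq b$ and $ab\neq 1$, and then $c\in L\smallsetminus\{1,a,b,ab\}$ (all possible since $L$ is infinite), $3$‑transitivity produces $g\in P_z$ with $g\inv a g=a$, $g\inv b g=b$ and $g\inv(ab)g=c$, contradicting $g\inv(ab)g=(g\inv a g)(g\inv b g)=ab\neq c$. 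This contradiction proves the claim, completes the induction, and hence the lemma; it also makes transparent why normality of $N$ is essential — without it, affine groups are genuine counterexamples.
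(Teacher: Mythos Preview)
The paper does not supply its own proof of this lemma; it is simply quoted from \cite[Lem.~1.4(2)]{moonHighlyTransitiveActions2013}. So there is no in-paper argument to compare against, and the relevant question is just whether your proof is correct --- which it is. The base case is the classical ``$2$-transitive $\Rightarrow$ primitive $\Rightarrow$ nontrivial normal subgroups are transitive'' step; the induction correctly reduces $(d{+}1)$-transitivity of $N$ to non-triviality of $M=N_{(x_1,\dots,x_d)}$ on the complement (using that $d$-transitivity of $N$ makes all $d$-point stabilisers in $N$ conjugate, so triviality of one is triviality of all); and your handling of the only genuine obstruction --- a regular normal subgroup $L$ of a highly transitive group --- via $3$-transitivity of the $P_z$-conjugation action on $L\smallsetminus\{1\}$ is clean and valid for $X$ of arbitrary infinite cardinality. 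All the finiteness checks (finding $a,b$ with $a,b,ab$ distinct in $L\smallsetminus\{1\}$, and $c\notin\{1,a,b,ab\}$) go through because $L$ is infinite.

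For comparison, when $X$ is countable there is a shorter route you may wish to know: after passing to the faithful quotient, high transitivity means $\alpha(\Gamma)$ is dense in $\Sym(X)$, so the closure $\overline{\alpha(N)}$ is a closed normal subgroup of $\Sym(X)$; topological simplicity of $\Sym(X)$ then forces $\overline{\alpha(N)}=\Sym(X)$, i.e.\ $N$ acts highly transitively. Your argument trades this structural input for an elementary, cardinality-free computation --- a reasonable exchange, and arguably closer in spirit to the permutation-group proof one finds in the cited reference.
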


We now introduce the topological versions of these notions. 
We choose a definition which slightly differs from the classical one
(they are equivalent when the underlying space is infinite Hausdorff perfect, see Lemma~\ref{lem: HTT vs HTT}).
The advantage of our choice is that an action on an infinite set \(X\)
is highly transitive if and only if it is highly topologically transitive when endowing
\(X\) with the discrete topology. 
In general,  
    	\(X^{(d)}\) is endowed with the topology induced form the product topology on \(X^{d}\).
Note that when \(X\)
is infinite compact, then \(X^{(d)}\) is not compact anymore.

\begin{definition}
    \label{def: htt action}
	    	    Let $\alpha$ be a  right action of a group \( \Gamma \) on a Hausdorff topological space \(X\)
    	by homeomorphisms.

    	\begin{enumerate}
    	\item 
The action is called \defin{topologically transitive~(TT)}
    	if whenever \(U, V\) are nonempty open subsets of \(X\), 
    	there is \(\gamma\in\Gamma\) such that \[ U\alpha(\gamma)\cap V\neq \emptyset.\]
    	
    	\item Given \(d\in\N\), if \(X\) contains at least \(d\) elements, the action is furthermore called \defin{\(d\)-topologically transitive (\(d\)-TT)} when the diagonal action 
    	\(\alpha^{(d)}\) on \(X^{(d)}\) is topologically transitive.
    	    
	    \item If \(X\) is infinite, \( \alpha \) is called \defin{highly topologically transitive (HTT)}
    	when it is \(d\)-topologically transitive for every \(d\in\N\). 
    	\end{enumerate}
\end{definition}

Note that in the definition of topological transitivity, 
we may always restrict to \(U\) and \(V\) belonging to a basis of the topology of \(X\).
The next lemma connects this definition with the more classical definition, where 
\(X^{(d)}\) is replaced by \(X^d\). 
Recall that a topological space is called \defin{perfect}
when it has no isolated points. 

\begin{lemma}\label{lem: HTT vs HTT}
	Let \( \alpha \) be a \( \Gamma \)-action by homeomorphisms on a 
	Hausdorff infinite topological space \(X\). Then \( \alpha \) is highly topologically transitive
	in the sense of Definition \ref{def: htt action}
	iff one of the following holds:
	\begin{itemize}
		\item \(X\) is perfect and for every \( d\in\N \), the diagonal action \(\alpha^d\) on 
				\(X^d\) is topologically transitive;
		\item The set of isolated points of \(X\) is dense and the restriction of 
				\( \alpha \) to this set is highly transitive (in particular, this set consists of a single orbit).
	\end{itemize}
\end{lemma}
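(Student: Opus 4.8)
The plan is to prove the equivalence by dissecting according to whether or not $X$ has isolated points, using the basic topological fact that in a Hausdorff space, if $U$ is a nonempty open set and $X$ is perfect, then $U$ is infinite, whereas if $X$ has an isolated point whose orbit is not dense then the action cannot be topologically transitive. First I would record the following elementary observations: (a) for a Hausdorff space $X$, the diagonal $\Delta_d \subseteq X^d$ is closed, so $X^{(d)}$ is open in $X^d$; (b) if $X$ is perfect then $X^{(d)}$ is dense in $X^d$ (given a basic open box $U_1\times\cdots\times U_d$, shrink the $U_i$ to disjoint nonempty open subsets, possible since each $U_i$ is infinite), hence $d$-topological transitivity in the sense of Definition~\ref{def: htt action} is equivalent to topological transitivity of $\alpha^d$ on all of $X^d$; (c) conversely, if $\alpha^d$ on $X^d$ is topologically transitive for all $d\ge 2$, then $X$ has no isolated point $x$ with a non-dense orbit, since otherwise taking $U=\{x\}$ and $V$ an open set disjoint from $\overline{x\cdot\Gamma}$ and applying transitivity on $X^2$ to $U\times V$ and $V\times U$ would be contradictory — I would need to think about whether $d=2$ suffices here or whether one argues directly that isolated points form a single dense orbit.

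Next I would treat the two cases. \emph{Case 1: $X$ is perfect.} Then by observation (b), HTT in the sense of Definition~\ref{def: htt action} is literally equivalent to ``$\alpha^d$ on $X^d$ is topologically transitive for all $d$'', which is the first bullet. So this case is essentially a restatement. \emph{Case 2: $X$ is not perfect}, i.e. $X$ has a nonempty set $I$ of isolated points. Suppose first $\alpha$ is HTT in the sense of Definition~\ref{def: htt action}. Since each point of $I$ is isolated, $\{x\}$ is open for $x\in I$; topological transitivity (the $d=1$ case) applied to singletons shows that the orbit of any isolated point meets every nonempty open set, in particular is dense, and also that any two isolated points are in the same orbit (apply TT to $\{x\},\{y\}$). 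Density of a single orbit forces $I$ itself to be dense (the orbit of an isolated point consists of isolated points since the action is by homeomorphisms, and it is dense). Then for any $d$, a basic open box in $X^d$ with each factor shrunk to contain a distinct isolated point lands in $I^{(d)}$, and $d$-TT on $X^{(d)}$ restricted to these open sets says exactly that $\alpha|_I$ is $d$-transitive on the discrete set $I$; ranging over $d$ gives high transitivity of $\alpha|_I$. This is the second bullet. Conversely, if $I$ is dense and $\alpha|_I$ is highly transitive, then given any nonempty open $U_1,\dots,U_d \subseteq X$ with the goal of moving a $d$-tuple from $\prod U_i$ into $\prod V_i$, I pick distinct isolated points in each $U_i$ and each $V_i$ (possible by density of $I$ and shrinking), and use high transitivity of $\alpha|_I$ to find $\gamma$ carrying the first tuple to the second; this witnesses $d$-TT, and ranging over $d$ gives HTT in the sense of Definition~\ref{def: htt action}.

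Finally I would note the cases are exhaustive and mutually exclusive, so the ``iff one of the following holds'' is established; and I should double-check the parenthetical ``(in particular, this set consists of a single orbit)'' is covered — it is, by the $d=1$ argument above showing any two isolated points share an orbit. The main obstacle I anticipate is the bookkeeping in Case 2 when passing between open boxes in $X^d$ and tuples of isolated points: one must be careful that shrinking the $U_i$ to singletons of isolated points is legitimate (it is, since isolated points are open and dense, so every nonempty open set contains one) and that the distinctness required in $X^{(d)}$ matches the pairwise-distinctness in the definition of high transitivity — here density of $I$ lets us choose the $2d$ isolated points pairwise distinct. A secondary subtlety is making sure ``highly topologically transitive'' as just defined for $X$ perfect genuinely coincides with the classical notion; this is exactly observation (b) and deserves an explicit sentence rather than being swept under the rug.
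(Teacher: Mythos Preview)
Your proposal is correct and follows essentially the same approach as the paper: split into the perfect and non-perfect cases, use that pairwise disjoint boxes form a basis of \(X^{(d)}\) (your observation (b), the paper's opening paragraph) to identify \(d\)-TT on \(X^{(d)}\) with TT on \(X^d\) when \(X\) is perfect, and in the non-perfect case use singletons at isolated points as open sets. Your observation (c) is an unnecessary detour you can drop, and in the Case~2 converse you only need the \(x_i\)'s pairwise distinct and the \(y_i\)'s pairwise distinct (not all \(2d\) jointly), which follows immediately once you work with pairwise disjoint \(U_i\)'s and pairwise disjoint \(V_i\)'s as the paper does---this cleans up the ``shrinking'' step you flagged.
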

\begin{proof}
	First observe that since \(X\) is Hausdorff, 
	the sets \(U_1\times\cdots\times U_d \),
	where \(U_i\subseteq X\) is open and the \(U_i\)'s are pairwise disjoint,
	form a basis for the topology of \(X^{(d)}\).
	It follows that \( \alpha \) is highly topologically transitive
	iff for all \( d\in\N \), for all \(U_1,\dots,U_d\subseteq X\) pairwise disjoint, open, and nonempty,
	and for all \(V_1,\dots,V_d\subseteq X\) pairwise disjoint, open, and nonempty,
	there is \( \gamma\in\Gamma \) such that for all \( i\in \{1,\dots,d\} \), 
	\( U_i\alpha(\gamma)\cap V_i\neq \emptyset \).
	Let us distinguish two cases.

	\paragraph{Case 1: \( X \) is perfect.}
	Assuming first that \( \alpha \) is highly topologically transitive,
	we now show that the action \(\alpha^d\) on \(X^d\) is topologically transitive. 
	To this end, by the above paragraph
	it suffices to show that given any nonempty open sets \(U_1,\dots,U_d\subseteq X\),
	there are \emph{pairwise disjoint} nonempty open sets \(U'_1\subseteq U_1,\dots,U'_d\subseteq U_d\). 
	Since \(X\) is perfect, every \(U_i\) is infinite, so we can select pairwise distinct
	\( x_i\in U_i\), and then since \(X\) is Hausdorff each \(x_i\) admits a neighborhood
	\(U'_i\subseteq U_i\) such that \({(U'_i)}_{i=1}^d\) consists of pairwise disjoint open sets, as 
	required. 

	Conversely, it is obvious that, for every \( d \), topological transitivity of 
	\( \alpha^d \) implies that \(\alpha^{(d)}\) is also topologically transitive.
	Conversely, it is obvious that, for every \( d \), topological transitivity of 
	\( \alpha^d \) implies that \(\alpha^{(d)}\) is also topologically transitive.

	\paragraph{Case 2: \( X \) is not perfect.}
	Let us first assume that \( \alpha \) is highly topologically transitive. By definition, \(X\) contains at least one isolated 
	point \(x_0\), and topological transitivity ensures us that for every non empty open set \(U\),
	there is some \( \gamma\in\Gamma \) such that \( \{ x_0 \}\alpha(\gamma)\cap U\neq\emptyset \), 
	i.e.\ the isolated point \(x_0\alpha(\gamma)\) belongs to \(U\). 
	So the set of isolated points is dense. 
	Using singletons as open sets, it is then straightforward to see that the restriction of \( \alpha \) 
	to the set of isolated points is highly transitive as desired.

	Conversely, assume that the set of isolated points is dense and that the 
	restriction of \( \alpha \) to this set is highly transitive. 
	Let \(U_1,\dots,U_d\) and \(V_1,\dots,V_d\) be pairwise disjoint nonempty open subsets of \(X\), 
	for every \( i\in \{1,\dots,d\} \)
	we pick \(x_i\in U_i\) isolated and \(y_i\in V_i\) isolated, then the \(x_i\)'s and the \(y_i\)'s are 
	pairwise distinct by disjointness, so by assumption we find \( \gamma\in\Gamma \) taking each
	\(x_i\) to \(y_i\), in particular \( U_i\alpha(\gamma) \cap V_i\neq \emptyset \) as desired.
\end{proof}

\begin{remark}\label{rmk: disjoint version for htt}
	Similar considerations yield that Lemma~\ref{lem: ht with all distinct} can be generalized as follows:
	a \( \Gamma \)-action \( \alpha \) by homeomorphisms on an infinite Hausdorff topological space \(X\) is highly topologically transitive if and only if 
	for every \( d\in\N \), and every \defin{pairwise disjoint} nonempty open sets \( U_1,\dots,U_{2d} \),
	there is \( \gamma\in\Gamma \) such that for all \( i\in \{1,\dots,d\} \), 
	we have \( U_i\alpha(\gamma)\cap U_{i+d}\neq\emptyset \). 

	On the contrary, the natural generalization of Lemma~\ref{lem: ht passes to normal subgroups} fails badly:
	high topological transitivity does not pass to normal subgroups acting faithfully. Here is a counterexample:
	let \( \Gamma=\Z\times\Z/2\Z \), and consider the natural \( \Gamma \)-action on \( (\Z/2\Z)^{\Z} \) where 
	\( \Z \) acts by shift and \( \Z/2\Z \) acts diagonally by translation. 
	The \( \Z \)-action is  highly topologically transitive, in particular the \(\Gamma\)-action
	is highly topologically transitive. 
	However, the normal subgroup \( \Z/2\Z \) acts faithfully,
	but its action is not even topologically transitive. 
\end{remark}

\begin{remark}
    Letting \( X^{(\infty)}=\{(x_n)\in X^\N\colon\forall i\neq j, x_i\neq x_j \} \), 
    one can conveniently reformulate high topological transitivity as the fact that the 
    diagonal action \(\alpha^{(\infty)}\) on \( X^{(\infty)} \) is topologically
    transitive. 
    If $X$ is discrete, then $\alpha$ is highly transitive if and only if \(\alpha^{(\infty)}\) is topologically     transitive.
\end{remark}

\subsection{Space of subgroups and high transitivity}

Recall that \(\Sub(\Gamma)\) is the set of subgroups of the countable group $\Gamma$. It is a Polish space when equipped with the topology defined by a basis of open (in fact clopen) subsets given by \(\Vc(\Ic,\Oc)=\{\Lambda\leq \Gamma\mid \Ic\subseteq \Lambda \text{ and } \Oc\cap \Lambda=\emptyset\}\)
where \(\Ic,\Oc\) are finite subsets of $\Gamma$.

   As in~\cite[Section~2.2]{CGLMS-22}, we freely identify the compact Polish space \(\Sub(\Gamma)\) 
with the space of isomorphism classes of transitive pointed
right \( \Gamma \)-actions. Such isomorphism classes are usually denoted \([\alpha,x_0]\).
Recall that \( \Gamma \) acts on \(\Sub(\Gamma)\) by conjugacy, and that in terms 
of isomorphism classes of transitive pointed right actions, this action is given by moving the base point: 
\[[\alpha,x_0]\cdot \gamma \coloneqq [\alpha,x_0\alpha(\gamma)].\]

We denote by \(\Sub_{[\infty]}(\Gamma)\subseteq\Sub(\Gamma)\) the set of infinite index subgroups of \( \Gamma \) and by 
\(\mathcal{HT}(\Gamma)\subseteq \Sub_{[\infty]}(\Gamma)\) the set of infinite index subgroups $\Lambda$ such that $\Lambda\bs \Gamma \curvearrowleft \Gamma$ is highly transitive.

Recall that a subset of a topological space is $G_\delta$ 
if it can be written as a countable intersection
of open subsets. 
Every closed subset of a Polish space is \(G_\delta\).
Furthermore, a subset of a Polish space is Polish for the induced topology
if and only if it is \( G_\delta \) \cite[Theorem~3.11]{kechris_classical_1995}. 

\begin{lemma}
\label{lem: Sub infty and HT are Gdelta}
The set  \(\Sub_{[\infty]}(\Gamma)\) is a $G_\delta$ subset of \(\Sub(\Gamma)\)
which is closed when $\Gamma$ is finitely generated.
The set \(\mathcal{HT}(\Gamma)\) is a \( G_\delta \) subset of \(\Sub(\Gamma)\) 
(and of \(\Sub_{[\infty]}(\Gamma)\)).
In particular, \(\mathcal{HT}(\Gamma)\) and  \(\Sub_{[\infty]}(\Gamma)\) are both 
Polish spaces for their induced topology.
\end{lemma}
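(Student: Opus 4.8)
The plan is to prove each assertion of Lemma~\ref{lem: Sub infty and HT are Gdelta} separately, using the clopen basis $\Vc(\Ic,\Oc)$ of $\Sub(\Gamma)$ and the identification of points of $\Sub(\Gamma)$ with isomorphism classes of transitive pointed right actions.

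First I would handle $\Sub_{[\infty]}(\Gamma)$. The complement, $\Sub(\Gamma)\setminus\Sub_{[\infty]}(\Gamma)$, is the set of finite index subgroups. Enumerate $\Gamma=\{\gamma_0=\id,\gamma_1,\gamma_2,\dots\}$. A subgroup $\Lambda$ has index at most $k$ if and only if there is a finite set $F\subseteq\Gamma$ with $|F|\le k$ such that $\Gamma=\bigcup_{\gamma\in F}\Lambda\gamma$, equivalently such that every $\gamma_i$ lies in some $\Lambda\gamma$ with $\gamma\in F$; membership $\gamma_i\gamma^{-1}\in\Lambda$ is a clopen condition, so "$[\Gamma:\Lambda]\le k$" defines a closed set (a finite union, over the finitely many candidate coset-representative systems drawn from a fixed initial segment, of closed conditions — one checks that if $[\Gamma:\Lambda]\le k$ then coset representatives can be taken among $\gamma_0,\dots,\gamma_{N}$ for $N$ large depending only on $k$, using that the Schreier graph has bounded size). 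Hence $\{\Lambda:[\Gamma:\Lambda]<\infty\}=\bigcup_k\{[\Gamma:\Lambda]\le k\}$ is $F_\sigma$, so $\Sub_{[\infty]}(\Gamma)$ is $G_\delta$. When $\Gamma$ is finitely generated with generating set $S$, a transitive action on a set of size $\le k$ is determined by the images of the generators, and "the orbit of the base point has size $\le k$" becomes a single clopen condition (there exist $\gamma$ in the ball of radius $k$ in $S$ realizing a spanning set of cosets and all further products collapsing), so each $\{[\Gamma:\Lambda]\le k\}$ is clopen and the increasing union stabilizes to show $\Sub_{[\infty]}(\Gamma)$ is closed; alternatively, index is lower semicontinuous and finite index is an open condition when $\Gamma$ is finitely generated.

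Next I would show $\mathcal{HT}(\Gamma)$ is $G_\delta$. Work with the pointed-action picture: $\Lambda$ corresponds to $[\alpha,x_0]$ where $X=\Lambda\bs\Gamma$, $x_0=\Lambda$. By Lemma~\ref{lem: ht with all distinct}, the action is highly transitive iff for every $d$ and every choice of $2d$ pairwise distinct points, some $\gamma$ sends $x_i$ to $x_{i+d}$ for $i=1,\dots,d$. I would express each point of $X$ as $x_0\alpha(\eta)$ for $\eta\in\Gamma$ and rephrase everything in terms of membership of words in $\Gamma$ in $\Lambda$. Concretely, for tuples $\eta_1,\dots,\eta_{2d}\in\Gamma$, the condition "the cosets $\Lambda\eta_1,\dots,\Lambda\eta_{2d}$ are pairwise distinct" is the clopen condition $\bigwedge_{i\ne j}\eta_i\eta_j^{-1}\notin\Lambda$, and "$\exists\gamma\in\Gamma$ with $\Lambda\eta_i\gamma=\Lambda\eta_{i+d}$ for all $i\le d$" is the open condition $\bigcup_{\gamma\in\Gamma}\bigwedge_{i\le d}\eta_i\gamma\eta_{i+d}^{-1}\in\Lambda$ (a countable union of clopen sets). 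Thus
\[
\mathcal{HT}(\Gamma)=\bigcap_{d\ge 1}\ \bigcap_{(\eta_1,\dots,\eta_{2d})}\Big(\big\{\Lambda: \exists i\ne j,\ \eta_i\eta_j^{-1}\in\Lambda\big\}\ \cup\ \bigcup_{\gamma\in\Gamma}\big\{\Lambda:\forall i\le d,\ \eta_i\gamma\eta_{i+d}^{-1}\in\Lambda\big\}\Big),
\]
which is a countable intersection of open sets, hence $G_\delta$ in $\Sub(\Gamma)$; intersecting with the $G_\delta$ set $\Sub_{[\infty]}(\Gamma)$ (or just noting $\mathcal{HT}(\Gamma)\subseteq\Sub_{[\infty]}(\Gamma)$) shows it is also $G_\delta$ in $\Sub_{[\infty]}(\Gamma)$. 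I should double-check the edge case that the definition of high transitivity additionally requires $X$ to be infinite: but $\mathcal{HT}(\Gamma)$ is by definition a subset of $\Sub_{[\infty]}(\Gamma)$, so infiniteness is automatic and one intersects with that $G_\delta$ set anyway. Finally, the "in particular" clause is immediate from the quoted Kechris theorem that a subspace of a Polish space is Polish in the induced topology iff it is $G_\delta$.

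The main obstacle is the bookkeeping in the $\Sub_{[\infty]}$ part when $\Gamma$ is \emph{not} finitely generated: one must argue that the condition "$[\Gamma:\Lambda]\le k$" is genuinely closed (not merely $F_\sigma$), which requires the observation that bounded index forces coset representatives to be found among finitely many group elements depending only on $k$ — this needs a small uniformity argument rather than a one-line clopen description. The $\mathcal{HT}$ part is essentially a syntactic translation and should go through routinely once the coset-distinctness and transitivity conditions are written in terms of membership in $\Lambda$.
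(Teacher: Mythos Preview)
Your argument for \(\mathcal{HT}(\Gamma)\) being \(G_\delta\) is correct and essentially identical to the paper's: both translate Lemma~\ref{lem: ht with all distinct} into membership conditions on \(\Lambda\) and write \(\mathcal{HT}(\Gamma)\) as a countable intersection of the open sets you denote by the big union of clopen conditions (the paper calls these \(\mathcal U_{g_1,\dots,g_{2d}}\)).

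There is, however, a genuine gap in your treatment of \(\Sub_{[\infty]}(\Gamma)\). You claim that if \([\Gamma:\Lambda]\le k\) then coset representatives can be taken among \(\gamma_0,\dots,\gamma_N\) for some \(N\) depending only on \(k\). This is false when \(\Gamma\) is not finitely generated: for instance, in \(\Gamma=\bigoplus_{i\ge 1}\Z/2\Z\) there are infinitely many subgroups of index \(2\), and no fixed finite initial segment of an enumeration of \(\Gamma\) contains a nontrivial coset representative for all of them simultaneously. So your description of ``\([\Gamma:\Lambda]\le k\)'' as a finite union of closed conditions over coset systems drawn from a fixed initial segment does not work, and the ``small uniformity argument'' you flag as the main obstacle is in fact not available.

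The fix is the paper's approach, which is simpler and dual to yours: instead of showing ``index \(\le k\)'' is closed, show directly that ``index \(\ge n\)'' is open. Indeed, \([\Gamma:\Lambda]\ge n\) holds iff there exist \(g_1,\dots,g_n\in\Gamma\) with \(g_ig_j^{-1}\notin\Lambda\) for all \(i\ne j\); this is a countable union of basic clopen sets \(\Vc(\emptyset,\{g_ig_j^{-1}:i\ne j\})\), hence open. Then \(\Sub_{[\infty]}(\Gamma)=\bigcap_{n\ge 1}\{\Lambda:[\Gamma:\Lambda]\ge n\}\) is \(G_\delta\). For the finitely generated case, the paper simply notes that every finite index subgroup is then isolated (a subgroup of index \(k\) is determined by finitely many membership/non-membership conditions involving words in the generators of length \(\le k\)), so \(\Sub_{[\infty]}(\Gamma)\) is closed.
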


We will merge the proof of Lemma~\ref{lem: Sub infty and HT are Gdelta} together with the proof of the following application of the Baire Category Theorem.

\begin{lemma}\label{lem: HT and Baire}
	Let \( \Gamma \) be a countable group. 
	Let \(\mathcal P \subseteq \Sub_{[\infty]}(\Gamma)\) be a \( G_\delta \) (hence Polish) subspace.
	Then the set \(\mathcal{HT}_{\mathcal P}\coloneqq \mathcal{HT}(\Gamma)\cap \mathcal P\) is dense in \(\mathcal P\)
	if and only if the following holds:
	\begin{enumerate}
	\item[(\textasteriskcentered)]%
	For every \([\alpha,x]\in \mathcal P \),
	 for every \( d\in\N \), for every \( g_1,\dots,g_{2d}\in \Gamma \) such that \({(x\alpha(g_i))}_{i=1}^{2d}\)
	consists of pairwise distinct elements, and for every neighborhood \(\mathcal V\)
	of \([\alpha,x]\), there is \([\alpha',x']\in\mathcal P\cap \mathcal V\) and 
	\( \gamma\in \Gamma \) such that for all \( i\in \{1,\dots,d\} \), we have
	\[
	x'\alpha'(g_i\gamma)=x'\alpha'(g_{i+d}).
	\]
\end{enumerate}

\end{lemma}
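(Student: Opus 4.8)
The plan is to run a standard Baire-category argument in the Polish space $\mathcal P$, after first recording the $G_\delta$ facts of Lemma~\ref{lem: Sub infty and HT are Gdelta}. For the latter: $\Sub_{[\infty]}(\Gamma)$ is the intersection over finite subsets $F\subseteq\Gamma$ with $|F|=k$ of the open sets $\{\Lambda\colon F\not\subseteq$ a transversal$\}$ — more concretely, $\Lambda$ has infinite index iff for every $k$ there exist $g_1,\dots,g_k$ with the $g_i^{-1}g_j\notin\Lambda$, which is an open condition, so $\Sub_{[\infty]}(\Gamma)=\bigcap_k(\text{open})$; when $\Gamma$ is finitely generated the finite-index subgroups of each index form a finite set and one checks the complement is open. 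For $\mathcal{HT}(\Gamma)$: by Lemma~\ref{lem: ht with all distinct} (reformulated in terms of pointed actions via the identification $\Sub(\Gamma)\cong$ isomorphism classes of transitive pointed actions), the action $\Lambda\bs\Gamma\reacts\Gamma$ associated to $[\alpha,x]$ is highly transitive iff for every $d$ and every $g_1,\dots,g_{2d}$ with $(x\alpha(g_i))_i$ pairwise distinct, there is $\gamma$ with $x\alpha(g_i\gamma)=x\alpha(g_{i+d})$ for all $i\le d$; fixing $d$ and the tuple $(g_i)$, the set of $[\alpha,x]$ for which such a $\gamma$ exists is open (it is a union over $\gamma\in\Gamma$ of basic clopen conditions: $g_i\gamma g_{i+d}^{-1}\in\Lambda$ for all $i$, read off a neighborhood basis), and we intersect over the countably many $(d,(g_i))$. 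Hence $\mathcal{HT}(\Gamma)$ is $G_\delta$, and being a $G_\delta$ subset of the $G_\delta$ set $\mathcal P$, it is $G_\delta$ in $\mathcal P$.

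Now fix the enumeration: for each pair $(d,\bar g)$ with $d\in\N$ and $\bar g=(g_1,\dots,g_{2d})\in\Gamma^{2d}$, let
\[
\Oc_{d,\bar g}\coloneqq\Bigl\{[\alpha,x]\in\mathcal P\colon \text{either }(x\alpha(g_i))_{i=1}^{2d}\text{ not pairwise distinct, or }\exists\gamma\ \forall i\le d\ \ x\alpha(g_i\gamma)=x\alpha(g_{i+d})\Bigr\}.
\]
Each $\Oc_{d,\bar g}$ is open in $\mathcal P$ by the argument above (the "not pairwise distinct" alternative is also open). By the reformulation via Lemma~\ref{lem: ht with all distinct}, $\mathcal{HT}_{\mathcal P}=\bigcap_{d,\bar g}\Oc_{d,\bar g}$. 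Since $\mathcal P$ is Polish, hence Baire, the intersection of these countably many open sets is dense iff each $\Oc_{d,\bar g}$ is dense in $\mathcal P$. So it remains to show: condition (\textasteriskcentered) holds iff every $\Oc_{d,\bar g}$ is dense in $\mathcal P$.

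For the equivalence with (\textasteriskcentered): density of $\Oc_{d,\bar g}$ means that every $[\alpha,x]\in\mathcal P$ and every neighborhood $\mathcal V$ of it meet $\Oc_{d,\bar g}$, i.e. there is $[\alpha',x']\in\mathcal P\cap\mathcal V$ lying in $\Oc_{d,\bar g}$. If for this particular $[\alpha,x]$ and this $\bar g$ the tuple $(x\alpha(g_i))_i$ is not pairwise distinct, then $[\alpha,x]\in\Oc_{d,\bar g}$ already and there is nothing to do; so density of all the $\Oc_{d,\bar g}$ reduces exactly to: for every $[\alpha,x]$, every $d$, every $\bar g$ with $(x\alpha(g_i))_i$ pairwise distinct, and every neighborhood $\mathcal V$, there is $[\alpha',x']\in\mathcal P\cap\mathcal V$ and $\gamma$ with $x'\alpha'(g_i\gamma)=x'\alpha'(g_{i+d})$ for all $i\le d$ — which is precisely (\textasteriskcentered). (One small point to spell out in both directions: shrinking $\mathcal V$, we may assume the approximant $[\alpha',x']$ still has $(x'\alpha'(g_i))_i$ pairwise distinct, since that is an open condition containing $[\alpha,x]$; but this is not even needed, since (\textasteriskcentered) as stated already produces a point of $\Oc_{d,\bar g}$.) This finishes the proof.

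The only genuinely nontrivial verification is the openness of $\Oc_{d,\bar g}$, and in particular the passage between "there exists $\gamma$ with $x\alpha(g_i\gamma)=x\alpha(g_{i+d})$" and a clopen condition on the subgroup: for $[\alpha,x]$ corresponding to $\Lambda=\Stab_\alpha(x)$, the relation $x\alpha(g_i\gamma)=x\alpha(g_{i+d})$ is equivalent to $g_i\gamma g_{i+d}^{-1}\in\Lambda$, and also the pairwise-distinctness of $(x\alpha(g_i))_i$ is equivalent to $g_ig_j^{-1}\notin\Lambda$ for $i\ne j$; hence membership in $\Oc_{d,\bar g}$ is a countable union (over $\gamma$) of conditions each cut out by finitely many clopen sets $\Vc(\Ic,\Oc)$, together with the open condition $g_ig_j^{-1}\notin\Lambda$. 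So $\Oc_{d,\bar g}$ is a union of basic open sets of $\Sub(\Gamma)$ intersected with $\mathcal P$, hence open in $\mathcal P$. Everything else is formal manipulation of the Baire category theorem, so I expect no further obstacle.
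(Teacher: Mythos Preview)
Your proof is correct and follows essentially the same approach as the paper: you define the same open sets (the paper calls them \(\mathcal U_{g_1,\dots,g_{2d}}\), you call them \(\Oc_{d,\bar g}\)), write \(\mathcal{HT}_{\mathcal P}\) as their countable intersection, and use Baire to reduce density of the intersection to density of each factor, which is then matched with condition~(\textasteriskcentered) via the same case split on whether the tuple \((x\alpha(g_i))_i\) is pairwise distinct. The only cosmetic difference is that the paper handles the forward implication (``\(\mathcal{HT}_{\mathcal P}\) dense \(\Rightarrow\) (\textasteriskcentered)'') directly rather than routing it through the intermediate statement ``each \(\Oc_{d,\bar g}\) dense'', and your parenthetical remark about shrinking \(\mathcal V\) (needed for ``density of each \(\Oc_{d,\bar g}\) \(\Rightarrow\) (\textasteriskcentered)'') is slightly muddled in its final clause but the argument itself is sound.
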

\begin{proof}[Proof of Lemmas~\ref{lem: Sub infty and HT are Gdelta} and \ref{lem: HT and Baire}]
For every $n$, the set of subgroups of index at least $n$
is the open set of those $\Lambda$ for which there exist pairwise distinct $g_1, \ldots g_n \in \Gamma$
such that $g_i g_j\inv \notin \Lambda$ for all $i\neq j$. Thus 
\(\Sub_{[\infty]}(\Gamma)=\bigcap_{n\in \N}\bigcup_{(g_i)_i\in \Gamma^{(n)}}
\mathcal V(\emptyset, g_i g_j\inv )\) is \( G_\delta \).
When $\Gamma$ is finitely generated, its finite index subgroups are isolated, thus $\Sub_{[\infty]}(\Gamma)$ is closed.

	We now show that the space \(\mathcal{HT}(\Gamma)\) is \( G_\delta \) 
	in \(\Sub_{[\infty]}(\Gamma)\), thus also showing that \(\mathcal{HT}_{\mathcal P}=\mathcal{HT}(\Gamma)\cap \mathcal P\)
	is \( G_\delta \) in \(\mathcal P\). 
	First observe that, by Lemma~\ref{lem: ht with all distinct}, a pointed transitive action \( [\alpha,x] \) on an infinite set is highly transitive
	iff for all \( d\in\N \), and for all \( g_1,\dots,g_{2d}\in\Gamma \) such that 
	\( x\alpha(g_i)\neq x\alpha(g_j) \) for all distinct \( i,j\in \{1,\dots,2d\} \), 
	there is \( \gamma\in\Gamma \) such that
	\( x\alpha(g_i\gamma)=x\alpha(g_{i+d})\) for all \( i\in \{1,\dots,d\}\).
	For every \( g_1,\dots,g_{2d}\in\Gamma \), let us thus denote by \( \mathcal U_{g_1,\dots,g_{2d}} \) the set of 
	all infinite index subgroups \( \Lambda\leq \Gamma \) such that at least one of the following holds:
	\begin{itemize}
		\item there is some \( i\neq j\in \{1,\dots,2d\} \) such that 
			  \( g_i g_j\inv\in\Lambda \);
		\item there is \( \gamma\in\Gamma \) such that 
		\(g_i\gamma g_{i+d}\inv\in\Lambda\), for all \(\forall i\in \{1,\dots,d\}\).
	\end{itemize}
	It is clear from their definition that all \( \mathcal U_{g_1,\dots,g_{2d}} \) are open. Our first observation combined with the correspondence between pointed actions 
	and subgroups yield that
	\[
	\mathcal{HT}(\Gamma)=\bigcap_{d\geq 1}\bigcap_{g_1\dots g_{2d}\in \Gamma} \mathcal U_{g_1,\dots,g_{2d}}.
	\]
	It follows that \(\mathcal{HT}(\Gamma)\) is \( G_\delta \) as wanted.

	Now if \(\mathcal{HT}_{\mathcal P}\) is dense in \(\mathcal P\),  Condition~(\textasteriskcentered) of Lemma~\ref{lem: HT and Baire}
	clearly holds. Conversely, assume that  Condition~(\textasteriskcentered) of Lemma~\ref{lem: HT and Baire} holds. 
	By the Baire category theorem applied to the Polish space \(\mathcal P\),
	it suffices to show that
	each intersection \(\mathcal P \cap\mathcal U_{g_1,\dots,g_{2d}}\) is dense in \(\mathcal P\).

	Let us thus fix some subgroup \(\Lambda\in\mathcal P\) and \( g_1,\dots,g_{2d}\in\Gamma \). We need to show that \( \Lambda \) is a limit of elements of \(\mathcal P\cap \mathcal U_{g_1,\dots,g_{2d}}\).
	If for all \(i\neq j\), we have \( g_i g_j\inv\not\in\Lambda \),  Condition~(\textasteriskcentered) of Lemma~\ref{lem: HT and Baire} precisely yields that conclusion.
	Otherwise, we have some \(i\neq j\) such that \( g_i g_j\inv\in\Lambda \), but then 
	\( \Lambda \) is already in \(\mathcal U_{g_1,\dots,g_{2d}}\), so we are done as well.
\end{proof}

Observe that if  Condition~(\textasteriskcentered) of Lemma~\ref{lem: HT and Baire} holds
for an arbitrary subset \(\mathcal P\subseteq \Sub_{[\infty]}(\Gamma)\)
then it also holds for its closure in \(\Sub_{[\infty]}(\Gamma)\).
This immediately yields the following result.

\begin{corollary}\label{cor: HT via dense set}
	Let \( \Gamma \) be a countable group. 
	Let \(\mathcal P \subseteq \Sub_{[\infty]}(\Gamma)\) be a subset satisfying  Condition~(\textasteriskcentered) of Lemma~\ref{lem: HT and Baire}.
	Let \(\overline{\mathcal P}\) denote the closure of \(\mathcal P\) in
	\( \Sub_{[\infty]}(\Gamma) \).
	Then the set \(\mathcal{HT}_{\overline{\mathcal P}}\) of highly transitive pointed actions 
	in \(\overline{\mathcal P}\) is dense \( G_\delta \) in \(\overline{\mathcal P}\).\qed{}
\end{corollary}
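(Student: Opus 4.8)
The plan is to obtain this as a direct consequence of Lemma~\ref{lem: HT and Baire}, applied not to \(\mathcal P\) itself but to its closure \(\overline{\mathcal P}\) in \(\Sub_{[\infty]}(\Gamma)\). Three points need to be checked: that \(\overline{\mathcal P}\) is a Polish (equivalently \(G_\delta\)) subspace, that it inherits Condition~(\textasteriskcentered) from \(\mathcal P\), and that \(\mathcal{HT}_{\overline{\mathcal P}}\) is \(G_\delta\) in \(\overline{\mathcal P}\).

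The first point is immediate: by Lemma~\ref{lem: Sub infty and HT are Gdelta} the space \(\Sub_{[\infty]}(\Gamma)\) is Polish, so its closed subset \(\overline{\mathcal P}\) is Polish as well; equivalently, writing \(C\) for the closure of \(\mathcal P\) in \(\Sub(\Gamma)\), we have \(\overline{\mathcal P}=C\cap\Sub_{[\infty]}(\Gamma)\), an intersection of two \(G_\delta\) subsets of \(\Sub(\Gamma)\), hence \(G_\delta\). The third point is also immediate: \(\mathcal{HT}(\Gamma)\) is \(G_\delta\) in \(\Sub(\Gamma)\) by Lemma~\ref{lem: Sub infty and HT are Gdelta}, so \(\mathcal{HT}_{\overline{\mathcal P}}=\mathcal{HT}(\Gamma)\cap\overline{\mathcal P}\) is \(G_\delta\) in \(\overline{\mathcal P}\).

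The substance, therefore, is the middle point, i.e.\ the observation recorded just above the statement: Condition~(\textasteriskcentered) passes from \(\mathcal P\) to \(\overline{\mathcal P}\). To verify it, fix \([\alpha,x]\in\overline{\mathcal P}\), an integer \(d\geq 1\), elements \(g_1,\dots,g_{2d}\in\Gamma\) with \({(x\alpha(g_i))}_{i=1}^{2d}\) pairwise distinct, and (shrinking if necessary) a basic clopen neighborhood \(\mathcal V\) of \([\alpha,x]\). The key remark is that pairwise distinctness of the \(x\alpha(g_i)\) is equivalent to the stabilizer of \(x\) avoiding the finite set \(\{g_ig_j\inv\colon i\neq j\}\), hence defines an open neighborhood \(\mathcal W\) of \([\alpha,x]\) in \(\Sub(\Gamma)\). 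Since \([\alpha,x]\in\overline{\mathcal P}\), we may pick \([\alpha_0,x_0]\in\mathcal P\cap\mathcal V\cap\mathcal W\); then the \(x_0\alpha_0(g_i)\) are pairwise distinct, and \(\mathcal V\) is an open neighborhood of \([\alpha_0,x_0]\). Applying Condition~(\textasteriskcentered) for \(\mathcal P\) to \([\alpha_0,x_0]\), to the same \(d\) and \(g_i\)'s, and to the neighborhood \(\mathcal V\), we obtain \([\alpha',x']\in\mathcal P\cap\mathcal V\subseteq\overline{\mathcal P}\cap\mathcal V\) and \(\gamma\in\Gamma\) with \(x'\alpha'(g_i\gamma)=x'\alpha'(g_{i+d})\) for all \(i\in\{1,\dots,d\}\). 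This is precisely Condition~(\textasteriskcentered) for \(\overline{\mathcal P}\).

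With these three points in hand, Lemma~\ref{lem: HT and Baire} applied to the \(G_\delta\) subspace \(\overline{\mathcal P}\) yields that \(\mathcal{HT}_{\overline{\mathcal P}}=\mathcal{HT}(\Gamma)\cap\overline{\mathcal P}\) is dense in \(\overline{\mathcal P}\), and combined with the third point it is dense \(G_\delta\) there, as claimed. There is no genuine obstacle here; the one place calling for a moment's care is the passage of Condition~(\textasteriskcentered) to the closure, where it is essential that ``the \(x\alpha(g_i)\) are pairwise distinct'' is an \emph{open} constraint on \([\alpha,x]\), so that it survives when \([\alpha,x]\) is approximated by elements of \(\mathcal P\).
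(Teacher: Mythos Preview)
Your proposal is correct and follows exactly the approach the paper intends: the paper's ``proof'' is the one-line observation preceding the corollary (that Condition~(\textasteriskcentered) passes to closures) together with the \(\qed\), and you have simply spelled out the details of that passage, correctly isolating that the openness of the pairwise-distinctness constraint is what makes it work.
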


\begin{remark}\label{rmk: ht for hull osin}
	Corollary \ref{cor: HT via dense set} can be used to show Hull and Osin's result on 
	high transitivity a bit differently (compare with~\cite[Sec.~3.3]{hullTransitivitydegreescountable2016}). 
	Recall that their result is that every acylindrically hyperbolic group
	with trivial finite radical is highly transitive.
	For this, they crucially use the notion of \emph{small subgroups}
	of such a group (see~\cite[Definition~2.10]{hullTransitivitydegreescountable2016}),
	which always has infinite index. 
	Denote by \( \mathcal P \) the set of all small subgroups of 
	a fixed acylindrically hyperbolic group \( \Gamma \) with trivial
	finite radical.  
	The fact that \( \Gamma \) is highly transitive
	can then be deduced directly from the following two statements,
	denoting by \(\overline{\mathcal P}\) the closure of \(\mathcal P\) 
	in \(\Sub_{[\infty]}(\Gamma)\):
	\begin{itemize}
		\item  By~\cite[Prop.~3.1]{hullTransitivitydegreescountable2016},
		    the set \( \mathcal P \) of small subgroups satisfies Condition~(\textasteriskcentered) of Lemma~\ref{lem: HT and Baire}.
			So we can apply Corollary \ref{cor: HT via dense set} and obtain that \(\mathcal{HT}_{\overline{\mathcal P}}\) 
			is dense \( G_\delta \) in \( \overline{\mathcal{P}} \)
			(note that their result is slightly stronger than  Condition~(\textasteriskcentered) of Lemma~\ref{lem: HT and Baire}).
		\item By~\cite[Lem.~2.11]{hullTransitivitydegreescountable2016}, every 
		element of \(\mathcal P\) is faithful. Since the set of faithful actions 
		is \( G_\delta \), we conclude that the set of faithful actions
		is dense \(G_\delta \) in \(\overline{\mathcal P}\).
	\end{itemize}
	By the Baire category theorem and the two above items, the set of highly transitive faithful actions
	is dense \( G_\delta \) in \(\overline{\mathcal P}\), in particular it is not empty and hence
	\( \Gamma \) is highly transitive. 
	A similar approach works for Chaynikov's 
	result on high transitivity for hyperbolic groups \cite{chaynikov_properties_2012}, 
	replacing 
	small subgroups by quasi-convex subgroups.
\end{remark}

We now observe a natural connection between high transitivity and high topological transitivity.
This statement will not be used in the proof of Theorem~\ref{th-intro: htt}, so the hasty reader can safely skip it, but it initially led us to some
cases which we state as Corollary~\ref{cor: htt from ht and tt for BS}.

\begin{theorem}[HT + TT \(\Rightarrow\) HTT]
	\label{thm: HT + TT implies HTT}
	Let \( \Gamma \) be a countable group and let \(\mathcal P\subseteq \Sub(\Gamma)\) be a \( \Gamma \)-invariant 
	nonempty Polish subset. 
	Assume that the set of highly transitive actions \([\alpha, x_0] \in \mathcal P\) 
	is dense in \(\mathcal P\) and that 
	the \( \Gamma \)-action on \(\mathcal P\) is topologically transitive.
	Then the \( \Gamma \)-action on \(\mathcal P\) is highly topologically transitive.
\end{theorem}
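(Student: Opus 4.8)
The plan is to reduce high topological transitivity on $\mathcal P$ to a statement about a single auxiliary action, namely the diagonal action on $\mathcal P^d$, and then verify topological transitivity of that action by a Baire category argument inside $\mathcal P$ itself. Fix $d\in\N$. By the reformulation at the top of the excerpt (and Lemma~\ref{lem: HTT vs HTT}), it suffices to show: for all pairwise disjoint nonempty open $\mathcal U_1,\dots,\mathcal U_d$ and pairwise disjoint nonempty open $\mathcal V_1,\dots,\mathcal V_d$ in $\mathcal P$, there is $\gamma\in\Gamma$ with $\mathcal U_i\gamma\cap\mathcal V_i\neq\emptyset$ for all $i$. First I would translate everything into the language of pointed transitive actions: a basic open set of $\mathcal P$ is (the trace of) some $\mathcal V(\Ic,\Oc)$, and the conjugation action moves the base point, $[\alpha,x_0]\cdot\gamma=[\alpha,x_0\alpha(\gamma)]$.

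Next I would exploit density of $\mathcal{HT}\cap\mathcal P$. Pick a highly transitive $[\alpha,x_0]\in\mathcal U_1$ and another highly transitive $[\beta,y_0]\in\mathcal V_1$. The idea is that within the \emph{single} orbit of $[\alpha,x_0]$ under $\Gamma$ — which consists of all $[\alpha, x_0\alpha(g)]$, $g\in\Gamma$ — one can use the high transitivity of $\alpha$ to move the base point so as to simultaneously fall into a prescribed finite ``pattern.'' Concretely, a neighborhood of $[\alpha,x_0]$ is determined by finitely many elements $g_1,\dots,g_r\in\Gamma$ recording which $g_k\in\Lambda_\alpha$ (equivalently $x_0\alpha(g_k)=x_0$) and which are not; staying inside a small neighborhood of $[\alpha,x_0]$ while changing the base point to $x_0\alpha(\gamma)$ amounts to $\gamma$ fixing the finitely many points $x_0\alpha(g_k)$ that were equal to $x_0$ and keeping distinct those that were distinct. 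High transitivity of $\alpha$ lets us realize, inside the same orbit, \emph{any} prescribed finite configuration of equalities/inequalities of base-point translates, so the orbit of $[\alpha,x_0]$ is dense in the closure of its own ``phenotype stratum.'' Combined with topological transitivity of the $\Gamma$-action on $\mathcal P$ (which says orbits are somewhere dense) and density of the highly transitive points, I would argue that the orbit closure of \emph{any} highly transitive point is all of $\mathcal P$: given any target basic open set $\mathcal W$, topological transitivity produces $\gamma$ with $[\alpha,x_0]\gamma\in\mathcal W$.

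The cleanest route, which I would actually write up, is the following two-step argument. Step 1: Topological transitivity plus the Baire category theorem give a dense $G_\delta$ set of points in $\mathcal P$ whose $\Gamma$-orbit is dense in $\mathcal P$; intersecting with the dense $G_\delta$ set $\mathcal{HT}\cap\mathcal P$ (dense by hypothesis, $G_\delta$ by Lemma~\ref{lem: Sub infty and HT are Gdelta}), we get a point $[\alpha,x_0]$ that is \emph{both} highly transitive \emph{and} has dense orbit in $\mathcal P$. Step 2: Now fix pairwise disjoint nonempty open $\mathcal U_1,\dots,\mathcal U_d,\mathcal V_1,\dots,\mathcal V_d$. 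Since the orbit of $[\alpha,x_0]$ is dense, choose $g_1,\dots,g_d,h_1,\dots,h_d\in\Gamma$ with $[\alpha,x_0\alpha(g_i)]\in\mathcal U_i$ and $[\alpha,x_0\alpha(h_i)]\in\mathcal V_i$. Each of $\mathcal U_i,\mathcal V_i$ is determined by a finite $\mathcal V(\Ic,\Oc)$; unravelling, there is a finite set $F\subseteq\Gamma$ such that membership of $[\alpha, x_0\alpha(g)]$ in $\mathcal U_i$ (resp.\ $\mathcal V_i$) depends only on the partition of $\{x_0\alpha(gf):f\in F\}$ into equal/distinct points and which equal $x_0\alpha(g)$. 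The points $x_0\alpha(g_i f)$, $f\in F$, $i\le d$, together with $x_0\alpha(h_i f)$, form two finite subsets of $X$; by pairwise disjointness of the $\mathcal U_i$ and of the $\mathcal V_i$ these are ``generic'' finite configurations, and — crucially — high transitivity of $\alpha$ (in the strong form of Lemma~\ref{lem: ht with all distinct}, applied to a large enough tuple listing all these translates) yields $\gamma\in\Gamma$ carrying the configuration around $x_0\alpha(g_i)$ exactly onto the configuration around $x_0\alpha(h_i)$, for every $i$ simultaneously. Then $[\alpha, x_0\alpha(g_i)]\cdot\gamma = [\alpha,x_0\alpha(g_i\gamma)] = [\alpha, x_0\alpha(h_i)]\in\mathcal V_i$, so $\mathcal U_i\gamma\cap\mathcal V_i\ne\emptyset$ for all $i$, which is exactly $d$-topological transitivity. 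As $d$ was arbitrary, the action is highly topologically transitive.

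The main obstacle I anticipate is Step 2's bookkeeping: making precise that a basic neighborhood in $\Sub(\Gamma)$ translates, via $[\alpha, x_0\alpha(g)]$, into a condition on a finite configuration of points in $X$ that is invariant under any $\gamma$ preserving that configuration, and then verifying that the \emph{disjointness} of the $\mathcal U_i$'s (resp.\ $\mathcal V_i$'s) guarantees that the combined source configuration and the combined target configuration are each genuinely ``all-distinct enough'' that a single highly transitive $\gamma$ can match them (one must be careful about points shared between the configurations around different $g_i$, and about points that coincide with base points — here disjointness of the open sets is exactly what rules out the bad overlaps, or at worst forces the matching to be compatible). Everything else — the Baire argument in Step 1, the orbit-move formula, the final conjugation computation — is routine given the lemmas already established in the excerpt.
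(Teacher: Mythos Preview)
Your approach is correct and lands on essentially the same final computation as the paper, but you take a longer route and worry about an obstacle that is not actually there.

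\textbf{Comparison with the paper.} The paper does not invoke Baire category. Instead of your Step~1, it uses topological transitivity directly: starting from pairwise disjoint nonempty open sets \(U_1,\dots,U_{2d}\), set \(g_1=\id\) and inductively choose \(g_2,\dots,g_{2d}\) so that \(U\coloneqq\bigcap_{i=1}^{2d} U_i\cdot g_i^{-1}\) is nonempty; then pick any highly transitive \([\alpha,x_0]\in U\) by density. This immediately yields \([\alpha,x_0\alpha(g_i)]\in U_i\) for all \(i\), and the rest is exactly your final paragraph. Your Step~1 (Baire to find a single point that is both highly transitive and has dense orbit) reaches the same situation, but requires the Polish hypothesis in an essential way, whereas the paper's argument uses only topological transitivity and density of \(\mathcal{HT}\cap\mathcal P\). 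Both are short; the paper's version is marginally more elementary.

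\textbf{Your Step~2 is overcomplicated.} All the ``finite configuration'' bookkeeping you anticipate is unnecessary. Once you have \([\alpha,x_0\alpha(g_i)]\in\mathcal U_i\) and \([\alpha,x_0\alpha(h_i)]\in\mathcal V_i\), disjointness of the \(\mathcal U_i\)'s forces the \emph{points} \(x_0\alpha(g_i)\) to be pairwise distinct (if two were equal, the corresponding pointed actions would be equal, contradicting disjointness), and likewise for the \(x_0\alpha(h_i)\). Ordinary \(d\)-transitivity of \(\alpha\) then gives \(\gamma\) with \(x_0\alpha(g_i)\alpha(\gamma)=x_0\alpha(h_i)\) for all \(i\), whence \([\alpha,x_0\alpha(g_i)]\cdot\gamma=[\alpha,x_0\alpha(h_i)]\in\mathcal V_i\). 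No neighborhood-by-neighborhood matching of ``in/out'' patterns is required; you only need to move \(d\) distinct points to \(d\) distinct points. This is exactly what the paper does (with \(h_i=g_{i+d}\) in its notation), and it is the entire content of the proof.
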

\begin{proof}
	Following Remark~\ref{rmk: disjoint version for htt}, we fix \(U_1,\dots,U_{2d}\) non empty pairwise disjoint open subsets of 
	\(\mathcal P\) and need to find \( \gamma\in\Gamma \) such that \(U_j\cdot \gamma \cap U_{j+d}\neq\emptyset \)
	for all \(j\in \{1,\dots,d\} \). 
	By topological transitivity, letting \( g_1=\id \), we can inductively find 
	\( g_2,\dots,g_{2d}\in\Gamma \)
	so that the open set \( U\coloneqq\bigcap_{i=1}^{2d}  U_i\cdot g_i\inv \) is not empty. 

	By density, we may and do fix a highly transitive action 
	\([\alpha, x_0] \in  U\). Then by the definition of \(U\),
	for all \( i\in \{1,\dots,2d\} \)
	we have \([\alpha, x_0\alpha(g_i)]\in U_i\). 
	Since \(U_1,\dots,U_{2d}\) are pairwise disjoint, the 
	elements \(x_0\alpha(g_1),\dots,x_0\alpha(g_{2d})\) are pairwise distinct.

	By high transitivity, there is then \( \gamma\in\Gamma \) such that for 
	all \( j\in \{1,\dots,d\} \), we have \(x_0\alpha(g_j)\alpha(\gamma)=x_0\alpha(g_{j+d}) \),
	hence \[[\alpha, x_0\alpha(g_j)] \cdot \gamma = [\alpha, x_0\alpha(g_{j+d})] \quad \text{ in } \Sub(\Gamma).\]
	Using that \([\alpha, x_0\alpha(g_i)]\in U_i\) for all \( i\in \{1,\ldots,2d\} \),
	we conclude that \( U_j\cdot\gamma \cap U_{j+d}\neq\emptyset \) 
	for all \( j\in \{1,\dots,d\} \) as wanted.
\end{proof}

As an application to Baumslag-Solitar groups, we obtain the following particular case of Theorem~\ref{th-intro: htt} from pre-existing results.
\begin{corollary}\label{cor: htt from ht and tt for BS}
	Let \( \Gamma = \BSo(m,n) \) where \( \abs{m}, \abs{n} \geq 2 \), \(\abs{m}\neq\abs{n}\),
	and let \( \PK_\infty(\Gamma) \) be the set of infinite phenotype subgroups in the perfect kernel.
	Then, the action \( \PK_\infty(\Gamma) \curvearrowleft \Gamma \) is highly topologically transitive.
\end{corollary}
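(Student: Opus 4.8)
The plan is to obtain this as a direct application of Theorem~\ref{thm: HT + TT implies HTT} to the \(\Gamma\)-invariant subspace \(\mathcal P\coloneqq\PK_\infty(\Gamma)\). Three hypotheses have to be verified: that \(\PK_\infty(\Gamma)\) is a nonempty \(\Gamma\)-invariant Polish subset of \(\Sub(\Gamma)\); that the \(\Gamma\)-action on it is topologically transitive; and that the highly transitive actions form a dense subset of it.

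The first hypothesis is essentially bookkeeping. The piece \(\PK_\infty(\Gamma)\) is \(\Gamma\)-invariant by construction of the phenotype decomposition of the perfect kernel recalled in the introduction, and it is nonempty: for instance \(\{\id\}\) has phenotype \(\infty\) and lies in the perfect kernel, since \(\langle b\rangle\) is infinite and has infinite index in \(\Gamma\), so that \(\{\id\}\bs\Gamma/\langle b\rangle\) is infinite. For Polishness, recall that the perfect kernel \(\PK(\Gamma)\) is closed in the compact Polish space \(\Sub(\Gamma)\), and that \(\PK_\infty(\Gamma)=\PHE_{m,n}\inv(\infty)\cap\PK(\Gamma)\) is closed in \(\PK(\Gamma)\) because each of the remaining pieces \(\PK_q\), \(q\in\QQ_{m,n}\smallsetminus\{\infty\}\), is open; hence \(\PK_\infty(\Gamma)\) is closed in \(\Sub(\Gamma)\), therefore compact, in particular Polish. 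The second hypothesis, topological transitivity of the \(\Gamma\)-action on \(\PK_\infty(\Gamma)\), is exactly the case \(q=\infty\) of the main topological-transitivity result of \cite{CGLMS-22}. For the third hypothesis --- density of \(\mathcal{HT}(\Gamma)\) in \(\PK_\infty(\Gamma)\) under the hypothesis \(\abs m\neq\abs n\) --- I would invoke \cite[Theorem~4.4]{fima_characterization_HT_2022}: when \(\abs m\neq\abs n\) the action of \(\BSo(m,n)\) on its Bass--Serre tree is faithful, minimal, of general type, with topologically free boundary action, and the Baire-category argument of that paper, read in the present framework, yields a dense \(G_\delta\) set of highly transitive actions located precisely in \(\PK_\infty(\Gamma)\). (This is also the \(q=\infty\) case of Theorem~\ref{th-intro: HT generic in Ph= 1 and oo}, but since the purpose of the corollary is to recover high topological transitivity from results already in the literature, I would keep the reference to \cite{fima_characterization_HT_2022}.)

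Once these three points are in place, Theorem~\ref{thm: HT + TT implies HTT} immediately gives that the \(\Gamma\)-action on \(\PK_\infty(\Gamma)\) is highly topologically transitive, which is the claim. The only step requiring genuine care is the third one: one must check that \cite[Theorem~4.4]{fima_characterization_HT_2022} really delivers \emph{density} of the highly transitive locus inside \(\PK_\infty(\Gamma)\) --- and that the ambient space of actions used there corresponds to this subspace of \(\Sub(\Gamma)\) --- rather than merely the existence of a single faithful highly transitive action; the remaining verifications are routine.
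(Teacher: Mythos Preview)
Your plan matches the paper's proof exactly: apply Theorem~\ref{thm: HT + TT implies HTT} with \(\mathcal P=\PK_\infty(\Gamma)\), getting topological transitivity from \cite[Theorem~B]{CGLMS-22} and density of \(\mathcal{HT}\) from \cite[Theorem~4.4]{fima_characterization_HT_2022}. The paper carries out precisely the step you flag as ``requiring genuine care'': it introduces the space \(\Ac\) of transitive \(\Gamma\)-actions on \(\N\) in which \(\alpha(b)\) has only infinite orbits (and infinitely many of them), observes that \(\Ac\) is partitioned into homeomorphic copies of the space \(\mathbf{TA}\) of \cite[Definition~4.2]{fima_characterization_HT_2022} (one copy for each admissible \(\sigma=\alpha(b)\)), applies \cite[Theorem~4.4]{fima_characterization_HT_2022} inside each copy, and then pushes density forward via the continuous surjection \(\Ac\to\PK_\infty(\Gamma)\), \(\alpha\mapsto[\alpha,x_0]\). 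One small correction: your parenthetical about the Bass--Serre tree action being ``faithful, minimal, of general type, with topologically free boundary'' describes the hypotheses of the main characterization theorem of \cite{fima_characterization_HT_2022}, not the content of their Theorem~4.4, which is the concrete density statement in \(\mathbf{TA}\) that is actually needed here.
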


\begin{proof}
	Let us fix \( x_0 \in \N \).
	Let \( \Ac \) denote the set of transitive \( \Gamma \)-actions \( \alpha \) on \( \N \)
	such that \([\alpha, x_0] \in \PK_\infty(\Gamma)\).
	This is exactly the set of transitive \( \Gamma \)-actions \( \alpha \) on \( \N \)
	such that the permutation \(\alpha(b)\) has infinitely many infinite orbits and no finite orbit (in particular, \( \Ac \) does not depend on the choice of \( x_0 \)).

	We endow \( \Ac \) with the pointwise convergence topology of \( \operatorname{Hom}(\Gamma, \Sym(\N)) \). 
	For every \(\sigma\in \Sym(\N)\) with infinitely many infinite orbits and no finite orbit, 
	the set of \( \alpha\in\Ac \) such that \(\alpha(b)=\sigma\) is a homeomorphic copy of the space \(\mathbf{TA}\) of~\cite[Definition~4.2]{fima_characterization_HT_2022}; 
	and these copies form a partition of \( \Ac \).
	Thus, Theorem 4.4 in the same article shows that highly transitive actions form a dense subset in \( \Ac \)
	(in fact a dense \( G_\delta \) subset in each copy of \(\mathbf{TA}\)).

	Since the map \(\Ac \to \PK_\infty(\Gamma); \alpha \mapsto [\alpha, x_0]\) is continuous and surjective,
	we obtain that
	highly transitive pointed actions form a dense subset in \(\PK_\infty(\Gamma)\).
	Finally, the action \( \PK_\infty(\Gamma) \curvearrowleft \Gamma \) is topologically transitive by~\cite[Theorem~B]{CGLMS-22}.
	Hence, Theorem~\ref{thm: HT + TT implies HTT} applies.
\end{proof}

\section{Background around Baumslag-Solitar groups}


\subsection{Graphs} 

We keep the same definitions and conventions about graphs as in~\cite{CGLMS-22}. 
Let us nevertheless emphasize the following points concerning graphs.
\begin{itemize}
	\item Paths are sequences of edges, and by convention paths of length \( 0 \) are vertices.
	\item Given a vertex \( v \), the ball of radius \( R\in\N \) is the graph induced by
	the set of vertices reachable from \(v\) by a path of length \( \leq R\), denoted by \(B(v,R)\).
	In particular, it may contain edges which connect two vertices at distance \(R\) from the identity, but which do not belong to any path of length \(\leq R\) starting from \( v \). 

	\item Given an edge \(e\), the half-graph of \(e\) is the connected component of 
	\(\target(e)\) obtained when removing the edge \(e\) from the ambient graph.
	When this connected component is a tree, we rather say the half-graph is a 
	\textbf{half-tree}.
	This is typically the case when the ambient graph is a tree, 
	but we will mostly use this 
	terminology in the forest saturation, which will be reviewed in the next section.
\end{itemize}

\subsection{Pre-actions} 

We continue our preliminaries by recalling what pre-actions are, motivated by the 
presentation of Baumslag-Solitar groups: 
\[
\BSo(m,n) \coloneqq \la b,t\mid t b^m t^{-1}=b^n\ra.
\]
Now, a (right) \defin{preaction} of \( \BSo(m,n) \) on a set \( X \) is a couple \( (\beta,\tau) \)
where \( \beta \) is a bijection of \(X\) and \( \tau \) is a \emph{partial} bijection
 of \(X\) such  that  \(\dom(\tau)\) is \(\beta^n\)-invariant,
\( \rng(\tau) \) is \( \beta^m \)-invariant and for all \(x\in\dom\tau\) we have
\( x \tau \beta^m = x\beta^n \tau \).
 The set \( X \) is called the \textbf{domain} of the pre-action.
 We also sometimes write pre-actions as triples \( (X,\beta,\tau) \)
 in order to make the domain explicit.
 We say that a pre-action is \textbf{saturated} when \(\dom\tau=\rng\tau=X\).
 Note that saturated pre-actions are the exact same thing as \(\BSo(m,n)\)-actions.

An \textbf{extension} of a pre-action \((X,\beta,\tau)\) is a pre-action \((X',\beta',\tau')\) such
that \(\beta'\) extends \(\beta\) (in particular \( X\subseteq X' \)) and \( \tau' \)
extends \(\tau\). 
Conversely, \((X,\beta,\tau)\) is the \textbf{restriction} to \( X\subseteq X' \) of a pre-action \((X',\beta',\tau')\) if it is itself a pre-action that is extended by \((X',\beta',\tau')\).
As we will recall in Section \ref{Sect: forest sat construction}, every pre-action can be canonically extended to a saturated pre-action that we call the \textbf{maximal forest saturation}.

Finally, by a \textbf{pointed pre-action}, we simply  mean a couple \( (\alpha,x) \) where \(\alpha\) is a pre-action on a set \(X\) and \(x\in X\).
\subsection{Schreier graphs}
 
To each pre-action \( (X,\beta,\tau) \) we associate a \textbf{Schreier graph} 
 whose underlying vertex set is \(X\), endowed with the following labeled edges:
 \begin{itemize}
	\item for every \( x\in X \), 
	we put a positive \(b\) labeled edge from \(x\) to \(\beta(x)\),
	and an opposite negative \( b\inv \)-labeled edge from \(\beta(x)\) to \(x\);
	\item for every \(x\in \dom \tau\), we put a positive \(t\)-labeled edge from \(x\) to \(\tau(x)\), and an opposite \( t\inv \)-labeled edge from \(\tau(x)\) to \(x\).
 \end{itemize}

\subsection{Bass-Serre graphs and \texorpdfstring{\((m,n)\)}{Bass-Serre graphs and (m,n)}-graphs}

To every pre-action \( (X,\beta,\tau) \) we associate another graph coarser 
than the Schreier graph, that we call its Bass-Serre graph, roughly obtained by shrinking \(b\)-orbits
to vertices while keeping track of their cardinality. 
To be more precise, the \textbf{Bass-Serre graph} associated to a pre-action \(\alpha=(X,\beta,\tau)\) of \( \BSo(m,n) \)
is the oriented labeled graph \( \BSe(\alpha) \) defined by
\[
V(\BSe(\alpha))\coloneqq X/\la\beta\ra, \quad
\left\{\begin{array}{l}
	E^+(\BSe(\alpha))\coloneqq\dom(\tau)/\la\beta^n\ra,  \\
	E^-(\BSe(\alpha))\coloneqq\rng(\tau)/\la\beta^m\ra,
\end{array}\right.
\]
where for every \(x\in\dom \tau\) we set
\[\source(x\la \beta^n\ra)\coloneqq x\la \beta\ra,\  \target(x\la \beta^n\ra)\coloneqq x\tau\la \beta\ra,\text{ and }\overline{x\la \beta^n\ra}\coloneqq x\tau \la \beta^m\ra=x\la \beta^n\ra \tau.\]
It is endowed with the \textbf{label map} \( L\colon V(\BSe(\alpha))\sqcup E(\BSe(\alpha))\to \Z_{\geq 1}\cup \{\infty\} \) given by
\[
L(x\la\beta\ra)\coloneqq\abs{x\la\beta\ra},
\quad L(x\la\beta^n\ra)\coloneqq \abs{x\la\beta^n\ra},
\quad L(y\la\beta^m\ra)\coloneqq \abs{y\la\beta^m\ra}.
\]

The labeled oriented graphs obtained as Bass-Serre graphs can be axiomatized
and are called \((m,n)\)-\textbf{graphs} (see \cite[Definition~3.12]{CGLMS-22}): 
every positive (\(t\)-labeled) edge \( e \)  has to satisfy the \textbf{transfer equation},
obtained by writing down the combinatorial condition that the sizes of \( \beta \)-orbits have to satisfy in order for \( \tau \) to be able to take one \(\beta^n\)-suborbit to a \(\beta^m\)-suborbit:
\begin{equation}
	\label{eq:transfert}
	\frac{L(\source(e))}{\gcd(L(\source(e)),n)}=L(e)=\frac{L(\target(e))}{\gcd(L(\target(e)),m)};
\end{equation}
Writing down what this means for \(p\)-adic valuations, we can reformulate this as: for every prime \(p\),
\begin{equation}\label{eq:p adic transfert}
            \max(\abs{L(\source(e))}_p - \abs{n}_p,0)
            = \abs{L(e)}_p
            = \max(\abs{L(\target(e))}_p - \abs{m}_p, 0).
\end{equation}

Note that if one of the two \(\beta\)-orbits is infinite, the other one
has to be as well, and we convene that the transfer equation means that both 
labels are infinite.

Moreover, one can write down the number of \(\beta^m\) and \(\beta^n\) orbits that a 
single \( \beta \)-orbit splits into, yielding the following constraints on the outgoing
and ingoing degree of every vertex \( v \): 
\begin{equation}\label{eq: deg bound by gcd L(v), n m}
	\degout(v)\leq \gcd(L(v),n)\ \ \text{ and } \ \ \degin(v)\leq \gcd(L(v),m), 
\end{equation}
where \(\gcd(k,\infty)=k\) for every nonzero \( k\in\Z \).
Note that a pre-action is saturated iff all the vertices \(v\) of its Bass-Serre graphs
are saturated, i.e. they satisfy
\begin{equation}\label{eq: deg max for saturated}
	\degout(v)= \gcd(L(v),n)\ \ \text{ and } \ \ \degin(v)= \gcd(L(v),m).
\end{equation}
In our previous paper, we showed the expected fact that \((m,n)\)-graphs do 
come from \(\BSo(m,n)\)-actions, but more interestingly that 
any \((m,n)\)-graph extension of the Bass-Serre graph of a pre-action
comes from some extension of the pre-action (see~\cite[Thm.~4.13]{CGLMS-22}). 
However, the results of the present paper won't make use of these facts, 
as we rather directly work at the level of pre-actions.

\subsection{Morphisms}

A \defin{morphism of pre-actions} from \( \alpha_1 = (X_1,\beta_1, \tau_1) \)
to \( \alpha_2 = (X_2,\beta_2, \tau_2) \)
is a map \( \varphi:X_1 \to X_2 \) such that one has 
\( \varphi(x\beta_1) = \varphi(x)\beta_2 \) for all  \( x\in X_1 \)
and \(\varphi(x\tau_1) = \varphi(x)\tau_2 \) for all \( x\in \dom(\tau_1) \).
It is an \defin{isomorphism} if it is bijective and \( \varphi\inv : X_2 \to X_1 \) is also a morphism.

Notice that a morphism \( \varphi:(X_1,\beta_1,\tau_1)\to(X_2,\beta_2,\tau_2) \) 
always maps \(\dom(\tau_1)\) into \(\dom(\tau_2)\).
It is an isomorphism if and only if it is bijective
and maps \( \dom(\tau_1) \) \textit{onto} \( \dom(\tau_2) \).
Also note that every pre-action (iso)morphism induces a labeled graph (iso)morphism 
at the level of Schreier graphs.
The same is true at the level of Bass-Serre graphs, except that morphism
may fail to respect the label map. They are nevertheless graph morphisms. 

\subsection{Connection to Bass-Serre theory}\label{Subsect: connection to Bass-Serre theory}

Finally, we recall the connection to Bass-Serre theory: 
the standard splitting of \(\Gamma = \BSo(m,n)\) as an HNN extension
gives rise to a left $\Gamma$-action on a tree $\Tree$ \cite[Chapter I, §~5]{serre_Trees_1980},
called the \defin{Bass-Serre tree}.
By definitons, this tree coincides with the Bass-Serre graph \(\BSe(\Gamma\curvearrowleft\Gamma)\)
associated to the free transitive
\( \Gamma\)-action by right-translation on itself.




The fact that \(\mathcal T\) is a tree is tightly connected to the uniqueness of
normal forms in HNN extensions, which depend on the choice of coset representatives. 
Here, a natural choice is to take \(\{b^k\colon 0\leq k\leq m-1\}\)
and \(\{b^k\colon 0\leq k\leq n-1\}\)
as coset representatives, and then by \cite[Chapter IV, Theorem 2.1]{lyndon_combinatorial_2001}, every \(\gamma\in\BSo(m,n)\) can be uniquely written in \textbf{normal form}
\[
\gamma=b^{k_1}t^{\varepsilon_1}\cdots b^{k_d}t^{\varepsilon_d}b^k
\]
where \(d\geq 0\), \(k\in\Z\), \(\varepsilon_i=\pm 1\) for \(1\leq i\leq d\), and: 
\begin{itemize}
    \item $\varepsilon_i=+1$ implies $0\leq k_i\leq n-1$,
    \item $\varepsilon_i=-1$ implies $0\leq k_i\leq m-1$,
    \item there is no subword of the form $t^\varepsilon b^0t^{-\varepsilon}$.
\end{itemize}
Note that the case 
$d=0$ corresponds to elements in $\la b\ra$.

Going back to the Bass-Serre tree \(\Tree\),
given a transitive right
\(\BSo(m,n)\)-action \(\Lambda \bs \Gamma \curvearrowleft\Gamma \), one can form the quotient graph 
\(\Lambda\bs\Tree\) whose vertices are thus double cosets 
\(\Lambda\bs\Gamma /\la b\ra \) and thus identify naturally to the vertex set
of the Bass-Serre graph of \(\Lambda\bs\Gamma \curvearrowleft\Gamma\). It is not hard to check that a similar identification holds at the edge level: the Bass-Serre graphs 
that we are considering can simply be stated as quotients of the Bass-Serre tree, suitably 
decorated by the label map\footnote{These labels also arise naturally from the description of \(\Lambda\) as the fundamental group of a graph of group, see \cite[Proposition~2.9 and Proposition~3.34]{CGLMS-22} for details.}. However, they also make sense for pre-actions, making them more flexible.

Finally, given a right action \(\alpha\) on a set \(X\) and \(x\in X\), we have a unique \(\Gamma\)-equivariant map \(\pi_x:\Gamma\to X\) such that \(\pi(\id)=x\), 
which when viewed as a pre-action morphism yields a graph morphism from 
\(\BSe(\Gamma\curvearrowleft\Gamma)=\Tree\) to \(\BSe(\alpha)\).
As we will see, many of our arguments will be stated at the level 
of this graph morphism, which takes a particularly nice form on the forest part of \(\alpha\) when \(\alpha\) is 
a maximal forest saturation (as defined in Section~\ref{Sect: forest sat construction}).

\subsection{Phenotype}\label{sec:phenotype}

Given a \( \BSo(m,n) \)-action (or more generally pre-action), the transfer equation 
that the edges of its Bass-Serre graph has to satisfy
induces a natural equivalence relation on integers, defined as the smallest equivalence relation \(\sim\) such that for all \(L,L'\in\Z_ {\geq 1}\), 
\begin{equation}\label{eq:transfer eq rel}
\frac L{\gcd(L,n)}\sim\frac{L'}{\gcd(L',m)}.
\end{equation}
A basic result from our first paper is the description of this equivalence relation 
as \emph{having equal phenotype} \cite[Proposition~4.6]{CGLMS-22}. The phenotype of an integer \( L\in\Z_{\geq 1} \) is defined as follows: 
\begin{itemize}
	\item We first let \(\primes_{m,n}(L)\coloneqq \left\lbrace p\in\primes\colon\abs{m}_p=\abs{n}_p\text{ and }\abs{L}_p>\abs{n}_p\right\rbrace \).
	\item Then the \( (m,n) \)-\defin{phenotype} of \( L \) is the following positive integer:
	\[\Phe_{m,n}(L)\coloneqq \prod_{p\in \primes_{m,n}(L)} p^{\abs{L}_p}.\]
	If \(L=\infty\), we set \(\Phe_{m,n}(L)\coloneqq\infty \).
\end{itemize}

The phenotype $\PHE_{m,n}(\Lambda)$ of a subgroup $\Lambda\leq \BSo(m,n)$ is obtained as the phenotype 
$\Phe_{m,n}([\la b\ra : \la b\ra \cap \Lambda])$ of the index of $\la b\ra \cap \Lambda$ in $\la b\ra$. 

As we will see, the constructions that we present here towards proving
high topological transitivity and high transitivity still rely on constructions similar 
to \cite[Theorem~4.17]{CGLMS-22} which allow us to connect vertices of equal phenotype, but we carry out such constructions directly
at the level of pre-actions  since we need to control exactly where points are taken.
These constructions will be carried out in details so that no knowledge of the proofs from \cite{CGLMS-22} is required.

Finally, recall from \cite[Theorem~A]{CGLMS-22} that 
the perfect kernel of \(\Sub(\BSo(m,n))\) is
the space 
\[
\PK(\Sub(\BSo(m,n))=
\{\Lambda\leq \BSo(m,n)\colon \Lambda\bs \Tree\text{ is infinite}\},
\]
where $\Tree$ is the standard Bass-Serre tree of $\BSo(m,n)$,
and that the phenotype map yields a partition 
\(\PK(\BSo(m,n))= \bigsqcup_{q\in\mathcal Q_{m,n}} \PK_q\) each of whose pieces 
\(\PK_q\coloneqq \PK(\BSo(m,n))\cap \PHE_{m,n}\inv(q)\) is  topologically transitive \cite[Theorem~B]{CGLMS-22}. 
If we denote the remaining piece in the space of infinite index subgroups by 
\(\mathcal{C}_{\infty}\coloneqq\Sub_{[\infty]}(\BSo(m,n))\smallsetminus \PK(\BSo(m,n))\),
then \(\mathcal{C}_{\infty}\) is empty when \(\abs{m}\neq\abs{n}\), and otherwise
is it a countable open subset of \(\PHE\inv(\infty)\).

\subsection{High faithfulness and proof of Proposition 
\ref{prop-intro: faithfulness according to phenotype}}

\begin{definition}
	Let \(\Gamma\) be a countable group.
    An action $X\curvearrowleft \Gamma$ is \textbf{highly faithful} if for any 
    finite set $F\subseteq\Gamma\smallsetminus\{1\}$, there
    is \(x\in X\) such that 
    for all \(\gamma\in F\), we have \(x\cdot \gamma\neq x\).
\end{definition}

Clearly every highly faithful action is faithful. 
Moreover, \(\Lambda\bs\Gamma\curvearrowleft\Gamma\) is highly faithful
iff there is a sequence of conjugates of \(\Lambda\) converging to 
the trivial subgroup \(\{\id\}\) (equivalently, \(\Lambda\) is 
not a confined subgroup, see \cite{leboudecConfinedSubgroupsHigh2022}).
We can now state and prove a slight strengthening 
of Proposition~\ref{prop-intro: faithfulness according to phenotype}.

\begin{proposition}\label{prop: faithfulness according to phenotype}
Let $\abs{m},\abs{n}\geq 2$, let \(\Gamma=\BSo(m,n)\).
The set of highly faithful actions forms a dense \( G_\delta \) set inside $\PK_q$ 
for every phenotype $q$ when $\abs{m} \neq \abs{n}$ 
and for $q=\infty$ when $\abs{m} = \abs{n}$; otherwise, $\PK_q$ contains no faithful action. 
Moreover, if $\Lambda$ belongs to $\mathcal{C}_\infty$, 
then the action $\Lambda\bs \BSo(m,n)\curvearrowleft \BSo(m,n)$ is not faithful.
\end{proposition}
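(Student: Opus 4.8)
The plan is to prove the proposition by combining three ingredients: the phenotype-based description of which pieces $\PK_q$ contain faithful actions, the density/$G_\delta$ machinery already established in Lemma~\ref{lem: HT and Baire} (or rather its analogue for high faithfulness), and a direct computation showing that confinement is impossible in the relevant pieces. First I would record the elementary observation that the set of highly faithful actions is $G_\delta$ in $\Sub(\Gamma)$: for each finite $F\subseteq\Gamma\smallsetminus\{\id\}$, the set of $\Lambda$ admitting some $g\in\Gamma$ with $g\gamma g\inv\notin\Lambda$ for all $\gamma\in F$ is open, and highly faithful subgroups are exactly those lying in the intersection over all finite $F$ of these open sets. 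Intersecting with the $G_\delta$ set $\PK_q$ keeps it $G_\delta$ in $\PK_q$.

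Next I would handle the negative statements, which are the cheapest. If $\abs m=\abs n$ and $q\neq\infty$, then every $\Lambda\in\PK_q$ contains a conjugate of a nontrivial power of $b$: indeed, when $\abs m=\abs n$ the piece $\PK_q$ is clopen, the element $b$ acts with all orbits finite (no infinite orbit can occur in finite phenotype when $\abs m = \abs n$, by the transfer equation \eqref{eq:p adic transfert} — the labels are bounded), and a standard argument shows some fixed power $b^N$ (depending on $q$, $m$, $n$) lies in every conjugate of $\Lambda$, so $\Lambda$ is not faithful. For the $\mathcal{C}_\infty$ statement: elements of $\mathcal C_\infty$ have finite Bass–Serre quotient $\Lambda\bs\Tree$, hence the action on $\Tree$ factors through a finite-index subgroup fixing a bounded subtree, forcing a nontrivial kernel; concretely $\Lambda$ contains a nontrivial normal subgroup of $\Gamma$ (one uses that $\BSo(m,n)$ with $\abs m,\abs n\geq 2$ has such normal subgroups, e.g. the kernel is detected via the ascending/descending structure of the tree).

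For the positive statements — high faithfulness is dense $G_\delta$ in $\PK_q$ whenever $\abs m\neq\abs n$ (any $q$), and in $\PK_\infty$ when $\abs m=\abs n$ — I would run the Baire category argument exactly as in the proof of Lemma~\ref{lem: HT and Baire}: it suffices to show that for every $\Lambda\in\PK_q$, every basic neighborhood $\mathcal V$ of $\Lambda$, and every finite $F\subseteq\Gamma\smallsetminus\{\id\}$, there is $\Lambda'\in\PK_q\cap\mathcal V$ and $g\in\Gamma$ with $g\gamma g\inv\notin\Lambda'$ for all $\gamma\in F$. This is a perturbation statement at the level of pre-actions: starting from a pointed pre-action representing $\Lambda$, with a large finite part already fixed (to stay inside $\mathcal V$), one extends it by attaching enough new material — and then passing to the maximal forest saturation (Section~\ref{Sect: forest sat construction}) — so that the basepoint, moved by a suitable group element $g$ into the newly-added region, is displaced by every $\gamma\in F$. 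The key point is that in the forest part of a maximal forest saturation the orbit map from $\Tree$ is locally injective, so a point deep in a freely-grown subtree genuinely distinguishes all elements of a prescribed finite set; one also must check the extension stays in phenotype $q$ and in $\PK_q$, which for $q=\infty$ (or for $q$ finite with $\abs m\neq\abs n$) is exactly the kind of controlled construction used in \cite{CGLMS-22} to prove topological transitivity of $\PK_q$.

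The main obstacle I expect is the last one: building, inside a prescribed basic open set of $\PK_q$ and while staying in phenotype $q$, an extension whose maximal forest saturation is highly faithful. One must argue that the "already fixed" finite part of the pre-action (needed to remain in $\mathcal V$) does not obstruct growing a free-enough subtree at the basepoint's orbit, and that the forest saturation does not accidentally create short relations. This is where the structural results advertised in the introduction — uniqueness of the maximal forest saturation and the explicit description of its stabilizer (Corollary~\ref{cor: maximal forest sat stabilizer}) — do the real work, since they let one read off from the saturation exactly which $g\gamma g\inv$ land in $\Lambda'$. Once that is in hand, high faithfulness (hence faithfulness) follows, and the "dense $G_\delta$" conclusion is immediate from Baire.
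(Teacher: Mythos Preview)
Your approach to density is correct in spirit but far more laborious than the paper's. The paper bypasses all pre-action surgery by observing that high faithfulness of $\Lambda\bs\Gamma\curvearrowleft\Gamma$ is precisely the statement that the conjugacy orbit of $\Lambda$ accumulates on $\{\id\}$. Since $\Gamma$ acts topologically transitively on each $\PK_q$ by \cite[Theorem~5.14]{CGLMS-22}, subgroups with dense orbit form a dense $G_\delta$ in $\PK_q$. For $q=\infty$ one has $\{\id\}\in\PK_\infty$, so dense orbits automatically accumulate on $\{\id\}$; for $\abs m\neq\abs n$ and arbitrary finite $q$, one uses instead that $\{\id\}\in\overline{\PK_q}$ by \cite[Theorem~D(2)]{CGLMS-22}, and again dense orbits accumulate on $\{\id\}$. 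No forest saturation analysis is required. Your constructive route via Corollary~\ref{cor: maximal forest sat stabilizer} would eventually work, but it reproves by hand what topological transitivity together with the location of $\{\id\}$ already gives for free.

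There is, however, a genuine gap in your treatment of $\mathcal C_\infty$. The assertion that ``the action on $\Tree$ factors through a finite-index subgroup fixing a bounded subtree'' is false: finiteness of $\Lambda\bs\Tree$ does not make the $\Gamma$-action on $\Tree$ factor through a finite quotient, and your fallback (``$\Lambda$ contains a nontrivial normal subgroup of $\Gamma$'') is asserted without argument. The paper's proof uses a specific idea you are missing. Since $\Lambda\in\mathcal C_\infty$ (which forces $\abs m=\abs n$), the action $\Lambda\bs\Gamma$ has finitely many $b$-orbits, all infinite, hence finitely many (say $k$) $b^m$-orbits. Because $tb^mt\inv=b^{\pm m}$ and $b$ commutes with $b^m$, the whole group $\Gamma$ acts by graph automorphisms on the unoriented Schreier graph of the $\la b^m\ra$-action, which is a disjoint union of $k$ bi-infinite lines. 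The automorphism group of this graph is $(\Z\rtimes\Z/2\Z)^k\rtimes\Sym(k)$, which is amenable, whereas $\Gamma$ is not; hence the homomorphism from $\Gamma$ to this automorphism group has nontrivial kernel, and the action is not faithful. This amenability comparison is the missing ingredient.
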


\begin{proof} [Proof of Proposition~\ref{prop: faithfulness according to phenotype}]
Observe that a subgroup $\Lambda$ gives rise to a highly faithful transitive action if and only if its orbit (under the action by conjugation) accumulates on $\{\id\}$.
 The set of subgroups corresponding to highly faithful actions is thus the \(G_\delta\) subset obtained from the basic open neighborhoods $\mathcal{V}(\emptyset, F)$ of $\{\id\}$ as follows:
 \[\bigcap_{F\Subset \Gamma\smallsetminus \{\id\}}\bigcup_{\gamma\in \Gamma}\mathcal{V}(\emptyset, F)\cdot \gamma.\]

When the phenotype is infinite, by topological transitivity of the action on $\PK_\infty$ (\cite[Theorem 5.14]{CGLMS-22}), there is a dense \( G_\delta\) subset of $\PK_\infty$, consisting of  dense orbits, thus of orbits accumulating on $\{\id\}\in \PK_\infty$.
For $\abs{m}\neq \abs{n}$, the trivial subgroup  $\{\id\}$ belongs to the closure of each $\PK_q$ (for any phenotype) (by \cite[Theorem D (2)]{CGLMS-22}). 
Thus every subgroup with dense orbit in $\PK_q$ 
has its orbit accumulating on $\{\id\}$.
By topological transitivity (\cite[Theorem 5.14]{CGLMS-22}), 
these dense orbits form a dense \(G_\delta\) subset of $\PK_q$. 
On the contrary, when $\abs{m}= \abs{n}$, every subgroup of finite phenotype $q$ contains the non-trivial normal subgroup $\lala b^{s(q,m,n)}\rara$
\cite[Theorem~5.20 (3)]{CGLMS-22}. 
The corresponding transitive actions are not faithful.

It remains to consider the case $\Lambda\in\mathcal{C}_\infty$, i.e.\ the case of infinite index subgroups that do not belong to the perfect kernel (which occurs only when \(\abs{m}=\abs{n}\)). 
This means that $\Lambda\bs \Gamma\curvearrowleft \Gamma$ has finitely many $b$-orbits,
each of which being infinite since $\Lambda$ has infinite index.
In particular, there are finitely many \(b^m\)-orbits. Let $k\in\N$
be the number of such orbits. 
Consider the non-oriented Schreier graph of the \(b^m\)-action: it consists of \(k\) bi-infinite lines, and observe that 
since \(tb^mt\inv=b^{\pm m}\) and \(b\) commutes with \(b^m\), $\Gamma$
acts by automorphism on this graph.
Since the automorphism group of this non-oriented Schreier graph
is the amenable group
\((\Z\rtimes \Z/2\Z)^k\rtimes \Sym(k)\)
and \(\Gamma\) is not amenable,
the kernel of the action $\Lambda\bs \Gamma\curvearrowleft \Gamma$ is not trivial as wanted.
\end{proof}

\section{Pre-actions and forest saturations} \label{Sect: forest saturations}

\subsection{Forest saturations of graphs and of pre-actions}\label{Sect: forest sat construction}
	In this section, we carry out some constructions from \cite{CGLMS-22}
	in a wider context. 
	The goal is to extend any pre-action to an action, or equivalently to a saturated pre-action. 

	In order to do this, it is natural to extend the Bass-Serre graph
	by connecting one vertex which is not yet saturated, i.e.~which does not satisfy 
	Equation \eqref{eq: deg max for saturated}, to a new
	vertex.
	This new vertex must have a label which respects the transfer equation 
	\eqref{eq:transfer eq rel}, but this leaves out several
	possibilities for the label in general, which is why we introduce the following notion.

	\begin{definition}
	    A \textbf{transfer rule} is a map 
	    \[
	        \trrule: (\Z_{\geq 1}\cup\{\infty\})\times \{+,-\}\longrightarrow \Z_{\geq 1}\cup\{\infty\}
        \]
	    such that for all \(L\in \Z_{\geq 1}\cup\{\infty\}\) and \(\varepsilon\in\{\pm\}\), 
		if we set \(L'=\trrule(L,\varepsilon)\)
	    then the transfer equation is satisfied, namely
	    \[
			\frac{L}{\gcd(L,n)} = \frac{L'}{\gcd(L',m)} \text{ if } \varepsilon = +
			\quad ; \quad
			\frac{L}{\gcd(L,m)} = \frac{L'}{\gcd(L',n)} \text{ if } \varepsilon = - .
		\]
	\end{definition}
	
	Important examples of transfer rules are the \defin{maximum rule} and the \defin{minimum rule}, 
	where \( \trrule(L,\varepsilon) \) is the maximum \( L' \), respectively minimum \( L' \), 
	such that the above relation is satisfied.
	The maximum rule can explicitly be described by
	\begin{equation}\label{eq: max rule}
\trrule(L,\varepsilon) = \abs{m}\frac{L}{\gcd(L,n)} \text{ if } \varepsilon = +
			\; ; \;
			\trrule(L,\varepsilon) = \abs{n}\frac{L}{\gcd(L,m)} \text{ if } \varepsilon = - .
	\end{equation}

    Let us now recall the notion of forest saturation introduced in \cite[Definition~4.21]{CGLMS-22}. 
    
    \begin{definition}\label{def: forest saturation of a graph}
		Let \( \Gc_0 \) be a \( (m,n) \)-graph. A \defin{forest extension} of \( \Gc_0 \) 
		is a \( (m,n) \)-graph $\Gc$ containing $\Gc_0$ and such that:
		\begin{itemize}
			\item the subgraph of $\Gc$ induced by the vertices of $\Gc_0$ is exactly $\Gc_0$;
			\item the subgraph of $\Gc$ induced by the other vertices is a forest $\mathcal{F}$;
			\item each connected component $\mathcal A$ of $\mathcal{F}$ is connected to $\Gc_0$ by a single pair of opposite edges in $\Gc$. The endpoint of these edges which is in $\mathcal A$ is called the \defin{root} of $\mathcal A$. 
		\end{itemize}
		If $\Gc$ is moreover saturated, we call it a \defin{forest saturation} of $\Gc_0$.
	\end{definition}
	
	In the context of Definition \ref{def: forest saturation of a graph}, 
	the \defin{parent} of a vertex $v$ of $\Fc$ 
	is the unique neighbour of $v$ which is closer to $\Gc_0$ than $v$.
	This parent is in $\Gc_0$ if $v$ is a root;
	otherwise, it is the neighbour of $v$ closer to the root of the connected component of $\Fc$.
	We say also that $v$ is a \defin{child} of its parent.

	We can finally formalize forest extensions where 
	the label of any vertex \(v'\) in the forest only depends 
	on the label of its parent \(v\) and on the orientation of the edge from the parent to \(v'\).

	\begin{definition}
		Let $\Gc$ be a forest extension of $\Gc_0$ and let $\Fc$ denote the forest.
		We say that the extension \defin{satisfies the transfer rule} \( \trrule\) if
		for any vertex \(v'\) in \( \Fc \), if we denote by \(v\) its parent then 
		\(L(v') = \trrule(L(v),\varepsilon)\),
		where \(\varepsilon\) is the orientation of the edge from \(v\) to \(v'\).
		
		It is called \defin{maximal}, respectively \defin{minimal},
		if it satisfies the maximum rule, respectively the minimum rule.
	\end{definition}
    
    \begin{remark}\label{rmk: degree in max forest sat}
    It follows from Equation \eqref{eq: deg max for saturated} and Equation 
    \eqref{eq: max rule} that in any maximal forest saturation, if \(v'\)
    is a child of \(v\) and \(e\) is the unique edge with source \(v\) and target \(v'\),
    then:
    \begin{itemize}
        \item if \(e\) is positive, then the incoming degree of \(v'\) is \(\abs{m}\);
        \item if \(e\) is negative, then the outgoing degree of \(v'\) is \(\abs{n}\).
    \end{itemize}
    \end{remark}
    
	These definitions extend naturally to pre-actions by considering their Bass-Serre graphs,
	for instance a \textbf{maximal forest saturation of a pre-action} \(\alpha_0\) is
	a pre-action \(\alpha\) extending \(\alpha_0\) whose Bass-Serre graph is 
	a maximal forest saturation of the Bass-Serre graph of \(\alpha\). 
	We can now state the main result of this section.

	\begin{theorem}\label{thm: forest saturation ruled}
		Let \( \alpha_0 \) be a pre-action and
		let $\trrule$ be a transfer rule. Then 
		\( \alpha_0 \) admits a forest saturation \( \alpha \) satisfying $\trrule$,
		and any two such saturations are isomorphic.
		In particular, up to isomorphism, \( \alpha_0 \) admits a unique maximal forest saturation and a unique minimal forest saturation.
	\end{theorem}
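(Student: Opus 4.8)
The plan is to build the forest saturation \(\alpha\) of \(\alpha_0\) by a transfinite process that repeatedly attaches new children to vertices which are not yet saturated, always assigning the label dictated by \(\trrule\). First I would set up the underlying combinatorics: given the Bass-Serre graph \(\Gc_0\) of \(\alpha_0\), list all the "defects", i.e.\ pairs \((v,\varepsilon)\) where \(v\) is a vertex and \(\varepsilon\in\{+,-\}\) for which \(\degout(v)<\gcd(L(v),n)\) (if \(\varepsilon=+\)) or \(\degin(v)<\gcd(L(v),m)\) (if \(\varepsilon=-\)); more precisely I would work with the actual missing \(\beta^n\)- or \(\beta^m\)-suborbits of the pre-action rather than just degrees, so that the construction lives at the level of pre-actions as the statement demands. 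For each such missing suborbit I adjoin a fresh \(\la\beta\ra\)-orbit of size \(\trrule(L(v),\varepsilon)\) together with the partial bijection \(\tau\) matching the relevant \(\beta^n\)- and \(\beta^m\)-suborbits; one checks that the transfer equation built into the definition of a transfer rule makes this a legitimate extension of pre-actions. Iterating: at successor stages attach children to all vertices present and unsaturated at that stage, at limit stages take the (increasing) union of pre-actions. After \(\omega\) steps every vertex is saturated — a vertex introduced at stage \(k\) gets all its remaining children at stage \(k+1\), and \(\Gc_0\)'s vertices are saturated by stage \(1\) — so the union \(\alpha\) is a saturated pre-action, hence a genuine \(\BSo(m,n)\)-action, and by construction its Bass-Serre graph is a forest extension of \(\Gc_0\) satisfying \(\trrule\) (the newly added vertices form a forest since each is attached by exactly one pair of opposite edges to something closer to \(\Gc_0\), and no edges are ever added among old vertices).

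For uniqueness, suppose \(\alpha\) and \(\alpha'\) are two forest saturations of \(\alpha_0\) satisfying \(\trrule\). I would construct an isomorphism \(\varphi\colon\alpha\to\alpha'\) fixing \(\alpha_0\) by induction on the distance to \(\alpha_0\), using that the labels in the forest are completely determined by \(\trrule\) and the labels of parents. Concretely: \(\varphi\) is the identity on \(\alpha_0\); given that \(\varphi\) has been defined as a label-preserving, \(\beta\)- and \(\tau\)-equivariant bijection on the ball of radius \(R\) around \(\alpha_0\), the children of a vertex \(v\) in that ball correspond on each side to the missing \(\beta^n\)- or \(\beta^m\)-suborbits at \(v\), all of which must be filled in a saturated pre-action; each such child orbit has the forced size \(\trrule(L(v),\varepsilon)\) on both sides, so there is a (essentially unique up to the \(\la\beta\ra\)-action, which I pin down by choosing base points consistently) equivariant bijection between the new orbits, extending \(\varphi\) to radius \(R+1\). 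Taking the union over all \(R\) gives the desired isomorphism. Finally, the maximum and minimum rules are transfer rules (the explicit formula \eqref{eq: max rule} shows the maximum rule is well defined, and the minimum is obtained dually), so the last sentence follows by specialization.

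The main obstacle I anticipate is bookkeeping rather than conceptual: making the "attach a child realizing a missing suborbit" step precise at the level of \emph{pre-actions} (not just \((m,n)\)-graphs), and verifying that after adjoining the new \(\la\beta\ra\)-orbit the triple \((X',\beta',\tau')\) still satisfies all the pre-action axioms — in particular that \(\dom(\tau')\) is \(\beta'^n\)-invariant and \(\rng(\tau')\) is \(\beta'^m\)-invariant, which is exactly where the transfer equation for \(\trrule\) is used, and that distinct missing suborbits at possibly distinct vertices can be filled independently and simultaneously at a given stage without interfering. The uniqueness argument also requires a little care at the level of choosing orbit representatives so that the inductively defined \(\varphi\) is genuinely well defined and not merely defined up to the residual \(\la\beta\ra\)-symmetry; fixing a base point in each orbit once and for all handles this. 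Everything else — the forest structure of the complement of \(\Gc_0\), the fact that the union of an increasing chain of pre-actions is a pre-action, and that a saturated pre-action is an action — is routine given the framework already set up.
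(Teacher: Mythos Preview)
Your proposal is correct and complete in outline, but it takes a different route from the paper's proof. The paper argues both existence and uniqueness via Zorn's lemma: for existence, it fixes a sufficiently large ambient set, takes a maximal forest extension satisfying \(\trrule\) among those living inside that set, and shows that if it were not saturated one could perform a single \emph{one-orbit free \(\trrule\)-extension} (described explicitly just before the proof), contradicting maximality; for uniqueness, it takes a maximal partial isomorphism between sub-forest-extensions of the two saturations fixing \(\alpha_0\), and uses Lemma~\ref{lem: isomorphisms and one orbit extensions} to extend it across one more orbit if it is not already total. Your approach, by contrast, is a direct breadth-first construction in \(\omega\) stages for existence, and an explicit level-by-level induction on distance to \(\Gc_0\) for uniqueness.

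Both work. Your construction is more explicit and essentially realizes what the paper records only as an afterthought in Remark~\ref{rem: saturation by sequence of one orbit extensions}; it also makes the tree structure of the forest part visibly emerge from the staging. The paper's Zorn arguments are shorter and sidestep all the bookkeeping you correctly flag as the main obstacle (simultaneous independent filling of defects, consistent choice of orbit representatives for \(\varphi\)), at the cost of being non-constructive. Note that the single-step extension you need --- adjoin one fresh \(\la\beta\ra\)-orbit of size \(\trrule(L(v),\varepsilon)\) and the matching piece of \(\tau\) --- is exactly the paper's one-orbit free \(\trrule\)-extension, and your inductive extension of \(\varphi\) across one child is exactly the content of Lemma~\ref{lem: isomorphisms and one orbit extensions}; so the two proofs share the same atomic moves and differ only in how those moves are assembled (transfinite exhaustion versus maximality).
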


	The proof of the existence relies on Zorn's lemma, the key point being to constructively
	extend non-saturated pre-actions by adding one orbit. We isolate this as the following construction.\\

	Let \( \alpha_1 = (Y, \beta_1, \tau_1) \) be a pre-action that is not saturated,
	let \(Y'\) be an infinite set disjoint from \(Y\).
	Since \(\alpha_1\) is not saturated, one of the following cases must occur.
\begin{itemize}
\item \(Y \smallsetminus \dom(\tau_1)\neq \emptyset\).
		Let us fix \(x\in Y \smallsetminus \dom(\tau_1)\).
		Let \(L\) be the cardinality of the \(\beta_1\)-orbit of \(x\).
		Let \(O'\subseteq Y'\) have cardinality \(L'=\trrule (L,+)\), 
		let  \(\beta\in \Sym(O')\) be an \(L'\)-cycle
		and define \(\beta_2=\beta_1\sqcup \beta\).
		
		Finally, fix some \(y\in O'\) and define \(\tau: x\la {\beta}^n\ra \to y\la {\beta}^m\ra\)
		as the unique \((\beta^n,\beta^m)\)-equivariant map taking \(x\) to \(y\), namely
		\[
		x\beta^{jn}\tau \coloneqq y {(\beta')}^{jm} \text{ for all } j\in \Z.
		\]
		This makes sense precisely because the transfer rule \( \trrule \)
		satisfies the transfer equation.
		Letting \(\tau_2=\tau_1\sqcup \tau\), we have that 
		\( \alpha_2=(Y\sqcup O', \beta_2,\tau_2) \) is as desired. 
		We then say that \(\alpha_2\) is a \textbf{positive one point free \(\trrule\)}-extension
		of \((\alpha_1,x)\)
\item  $Y \smallsetminus \rng(\tau_1)\neq \emptyset$.
		We fix this time \(x\in Y \smallsetminus \rng(\tau_1)\) and 
		proceed similarly as before: 
		let \(L\) be the cardinality of the \(\beta_1\)-orbit of \(x\).
		Let \(O'\subseteq Y'\) have cardinality \(L'=\trrule (L,-)\), 
		let  \(\beta\in \Sym(O')\) be an \(L'\)-cycle
		and define \(\beta_2=\beta_1\sqcup \beta\).
		
		Fix some \(y\in O'\) and define \(\tau: x\la \beta^m\ra \to y\la \beta^n\ra\)
		as the unique \((\beta^m,\beta^n)\)-equivariant map taking \(x\) to \(y\), which
		makes sense again thanks to the fact that \( \trrule \)
		satisfies the transfer equation.
		Letting \(\tau_2=\tau_1\sqcup \tau\), we have again that 
		\( \alpha_2=(Y\sqcup O', \beta_2,\tau_2) \) is as desired.
	We then say that \(\alpha_2\) is a \textbf{negative one point free \(\trrule\)}-extension
		of \((\alpha_1,x)\).
\end{itemize}

	\begin{lemma}
		\label{lem: isomorphisms and one orbit extensions}
		Let $\varphi: X_1 \to X_2$ be an isomorphism between 
		non-saturated pre-actions $\alpha_1 = (X_1,\beta_1, \tau_1)$ to $\alpha_2 = (X_2,\beta_2, \tau_2)$.
		Assume $\alpha'_1$ is a one orbit free \( \trrule \)-extension of $(\alpha_1,x_1)$ and
		$\alpha'_2$ is a one orbit free \( \trrule \)-extension of $(\alpha_2,\varphi(x_1))$
		both positive or both negative.
		Then, $\varphi$ extends to an isomorphism from $\alpha'_1$ to $\alpha'_2$.
	\end{lemma}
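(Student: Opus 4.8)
The plan is to extend $\varphi$ in the only way compatible with the chosen base points of the two new orbits, and then to verify by hand that the resulting map is an isomorphism of pre-actions. Since $\varphi$ is an isomorphism, it restricts to a bijection from the $\beta_1$-orbit of $x_1$ onto the $\beta_2$-orbit of $\varphi(x_1)$; write $L\in\Z_{\geq 1}\cup\{\infty\}$ for their common cardinality. Write $\alpha'_i=(X_i\sqcup O'_i,\beta'_i,\tau'_i)$, where $O'_i$ is the new orbit, $\beta'_i$ extends $\beta_i$ and acts on $O'_i$ as an $L'$-cycle with $L'\coloneqq\trrule(L,\varepsilon)$, and $\varepsilon\in\{+,-\}$ is the sign common to both extensions (with $L'=\infty$ when $L=\infty$). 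Let $y_1\in O'_1$ and $y_2\in O'_2$ be the distinguished points fixed in the respective one-orbit free $\trrule$-extensions. Define
\[
\tilde\varphi\colon X_1\sqcup O'_1\longrightarrow X_2\sqcup O'_2,\qquad
\tilde\varphi|_{X_1}\coloneqq\varphi,\qquad \tilde\varphi\bigl(y_1(\beta'_1)^k\bigr)\coloneqq y_2(\beta'_2)^k\ \text{ for all } k\in\Z.
\]
As $\lvert O'_1\rvert=\lvert O'_2\rvert=L'$ and $\beta'_1,\beta'_2$ are $L'$-cycles, $\tilde\varphi$ restricts to a well-defined bijection $O'_1\to O'_2$; hence $\tilde\varphi$ is a bijection extending $\varphi$.

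Next I would check that $\tilde\varphi$ is a morphism of pre-actions. It is $\beta$-equivariant: on $X_1$ this is equivariance of $\varphi$, and on $O'_1$ it is immediate from the defining formula. For $\tau$-equivariance, treat the positive case (the negative one is obtained by exchanging the roles of $m$ and $n$ and of $\dom$ and $\rng$, and is verbatim the same argument). Here $\dom(\tau'_1)=\dom(\tau_1)\sqcup x_1\la(\beta'_1)^n\ra$ and $\dom(\tau'_2)=\dom(\tau_2)\sqcup\varphi(x_1)\la(\beta'_2)^n\ra$, and by construction $x_1(\beta'_1)^{jn}\tau'_1=y_1(\beta'_1)^{jm}$ and $\varphi(x_1)(\beta'_2)^{jn}\tau'_2=y_2(\beta'_2)^{jm}$ for all $j\in\Z$. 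For $z\in\dom(\tau_1)$ one has $\tilde\varphi(z\tau'_1)=\varphi(z\tau_1)=\varphi(z)\tau_2=\tilde\varphi(z)\tau'_2$ since $\varphi$ is a morphism. For $z=x_1(\beta'_1)^{jn}$, using that $\tilde\varphi$ is $\beta$-equivariant and extends $\varphi$, $\tilde\varphi(z\tau'_1)=\tilde\varphi\bigl(y_1(\beta'_1)^{jm}\bigr)=y_2(\beta'_2)^{jm}=\varphi(x_1)(\beta'_2)^{jn}\tau'_2=\tilde\varphi(z)\tau'_2$. Thus $\tilde\varphi$ is a morphism.

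To conclude that it is an isomorphism, I would invoke the criterion recalled in the subsection on morphisms: a bijective morphism is an isomorphism as soon as it maps $\dom(\tau'_1)$ \emph{onto} $\dom(\tau'_2)$. This holds here because $\tilde\varphi$ maps $\dom(\tau_1)$ onto $\dom(\tau_2)$ ($\varphi$ being an isomorphism) and maps $x_1\la(\beta'_1)^n\ra$ onto $\varphi(x_1)\la(\beta'_2)^n\ra$ (by $\beta$-equivariance and surjectivity onto $O'_2$). This argument has no genuine obstacle; the one point to keep in mind is that the hypothesis that $\varphi$ is an isomorphism forces the $\beta$-orbits of $x_1$ and of $\varphi(x_1)$, and hence the two newly added orbits $O'_1$ and $O'_2$, to have the same cardinality $L'$ — which is precisely what makes the rule $y_1(\beta'_1)^k\mapsto y_2(\beta'_2)^k$ well defined.
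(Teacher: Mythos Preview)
Your proof is correct and follows essentially the same approach as the paper: extend $\varphi$ by the formula $y_1(\beta'_1)^k\mapsto y_2(\beta'_2)^k$, check $\beta$- and $\tau$-equivariance directly, and verify that $\dom(\tau'_1)$ is mapped onto $\dom(\tau'_2)$ to conclude it is an isomorphism. The paper's argument is virtually identical, only slightly more terse.
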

	\begin{proof}
		Set $\alpha'_i = (X'_i, \beta'_i, \tau'_i)$ for $i=1,2$ and $x_2 = \varphi(x_1)$.
		We treat the positive case. The negative case is similar and left to the reader.

		Recall that $X'_i = X_i \sqcup y_i \la \beta_i' \ra$ for $i=1,2$.
		The orbits of $y_1, y_2$ both have cardinal $M=\trrule(L,+)$ where
		\(L=\abs{x\la\beta_1\ra}=\abs{\varphi(x)\la \beta_2\ra}\).
		Thus, one can extend $\varphi$ to a bijection $\varphi': X'_1 \to X'_2$ by setting
		\[\varphi'(y_1 (\beta'_1)^k)  \coloneqq y_2 (\beta'_2)^k
		\text{ for all } k\in \Z .\]
		It remains to check that $\varphi'$ is an isomorphism
		between $\alpha'_1$ and $\alpha'_2$.
		The relation $\varphi'(x\beta'_1) = \varphi'(x)\beta'_2$, for every $x\in X'_1$, is obvious.
		Moreover, one has
		\[
			\dom(\tau_i') = \dom(\tau_i) \sqcup x_i \la \beta_i^n \ra
			\quad \text{ for } i= 1,2,
		\]
		so that $\varphi'$ maps $\dom(\tau_1')$ onto $\dom(\tau_2')$.
		Finally, for every $k\in \Z$, one has
		\begin{align*}
			\varphi'(x_1 \beta_1^{kn} \tau_1')
			& = \varphi'(x_1 \tau_1' (\beta_1')^{km})
			= \varphi'(y_1 (\beta_1')^{km})
			= y_2  (\beta_2')^{km} \\
			& = y_1 \tau_2' (\beta_2')^{km}
			= \varphi'(x_1)\beta_2^{kn} \tau_2'
			= \varphi'(x_1 \beta_1^{kn}) \tau_2' .
		\end{align*}
		Thus $\varphi'(x\tau_1') = \varphi'(x) \tau_2'$ for all $x\in \dom(\tau_1')$; the proof is complete.
	\end{proof}

	We now have all the tools in order to prove Theorem \ref{thm: forest saturation ruled}.

	\begin{proof}[Proof of Theorem \ref{thm: forest saturation ruled}]
		Let \(\alpha_0\) be a non-saturated pre-action on a set \(X\), we first need to show that 
		\(\alpha_0\) admits a saturated forest extension satisfying the transfer rule \(\trrule \). 
		
		First observe that any forest extension of \(\alpha_0\) must have an underlying set
		which is countable or finite if \(X\) was finite, and of the same cardinality as \(X\)
		if \(X\) was infinite. 
		Let us then fix a set \(Z\) containing \(X\) whose cardinality is strictly greater
		than that of \(X\). 
		
		Zorn's lemma provides us a forest extension \(\alpha=(Y,\beta,\tau)\) of \(\alpha_0\) satisfying \(\trrule\) 
		whose underlying 
		set is a subset of \(Z\) and which is maximal among such extensions. 
		Assume by contradiction \(\alpha\) is not saturated, then by our previous observation 
		\(Z\smallsetminus Y\) is infinite and we can thus construct a positive or negative 
		one point free \( \trrule \)-extension of \(\alpha\) with underlying set contained in \(Z\)
		as explained right after the statement of Theorem \ref{thm: forest saturation ruled}.
		This contradicts the maximality of \(\alpha\), which is thus saturated as desired.

		We now prove uniqueness: assume 
		\( \alpha_1, \alpha_2 \) are forest saturations of \( \alpha_0 \) satisfying \( \trrule \).
		For \( i=0,1,2 \), write \( \alpha_i = (X_i, \beta_i, \tau_i) \).
		Consider the set of partial isomorphisms,
		from a forest extension \( \alpha_1' = (X_1',\beta_1',\tau_1') \) contained in \( \alpha_1 \) 
		to a forest extension \( \alpha_2' = (X_2',\beta_2',\tau_2') \) contained in \( \alpha_2 \),
		that fix \( X_0 \) pointwise.
		This is a non-empty inductive poset.
		Hence, it contains a maximal element \( \varphi:X_1' \to X_2' \) by Zorn's Lemma.
		
		Let us prove by contradiction that $\alpha_1' = \alpha_1$ and $\alpha_2' = \alpha_2$; this will complete the proof.
		If this is  not the case, we may assume that $\alpha_1' \neq \alpha_1$ 
		(the case $\alpha_2' \neq \alpha_2$ is similar).
		In this case, $\alpha_1'$ is non-saturated,
		hence one can find \(x\in X'_1\) and a restriction \(\alpha''_1\) of \( \alpha_1 \)
		which is a one-orbit free \( \trrule \)-extension of \( (\alpha_1',x_1) \).
		As $\varphi: X_1' \to X_2'$ is an isomorphism
		the orbit $\varphi(x) \la \beta_2 \ra$ lies in $X_2'$
		while $\varphi(x) \tau_2^\varepsilon \la \beta_2 \ra$ does not.
		So, the restrictions of $\beta_2, \tau_2$ to 	
		\(X_2'' \coloneqq X_2' \sqcup \varphi(x) \tau_2^\varepsilon \la \beta_2 \ra\)
		give a one-orbit free \(\trrule\)-extension $\alpha_2''$ of $\alpha_2'$, contained in $\alpha_2$.
		Notice that $x \la \beta_1 \ra$ and $\varphi(x) \la \beta_2 \ra$
		have the same cardinal since $\varphi$ is an isomorphism.
		Lemma \ref{lem: isomorphisms and one orbit extensions} now implies
		that $\varphi$ extends to an isomorphism from $\alpha_1''$ to $\alpha_2''$, 
		contradicting the maximality of $\varphi$.
	\end{proof}

	\begin{remark}
		\label{rem: saturation by sequence of one orbit extensions}
		When $\BSe(\alpha_0)$ is countable, one can in fact obtain any forest saturation \( \alpha \)
	 	by a sequence of one orbit free extensions.
		More precisely, there exists a sequence
		\[
			\alpha_0 \subseteq \alpha_1 \subseteq \cdots \subseteq \alpha_n \subseteq \alpha_{n+1} \subseteq \cdots
		\]
		of one orbit free extensions such that $\alpha = \bigcup_{i\geq 0} \alpha_i$.
		To see this, it suffices to enumerate the edges of the forest wisely,
		e.g.~by doing a simultaneous breadth-first exploration of each 
		connected component the forest. 
	\end{remark}

\subsection{Stabilizer of a pointed pre-action}\label{subsect: stab of pre-action}

	Let $\Gamma \coloneqq \BSo(m,n)$.
	Let $\alpha=(X, \beta, \tau)$ be a pre-action of \( \Gamma \).
	Every path \( c \) in $\Sch(\alpha)$ between two vertices is labelled by a word on letters $t, t\inv, b, b\inv$;
	this word represents an element of \( \Gamma \) that we denote $\Psi(c)$.
	Notice that we have $\target(c) = \source(c)\cdot \alpha(\Psi(c))$ in \( X \).

	In particular, for every choice of a basepoint $x_0\in X$, the map $\Psi$ defines a group homomorphism $\Psi_{x_0}:\pi_1(\Sch(\alpha), x_0)\to \Gamma$ from the fundamental group of the connected component of \( x_0 \) to \( \Gamma \).
	\begin{definition}
		\label{stabilizer of pre-action}
		The image of $\Psi_{x_0}$ is called the \( \alpha \)-\defin{stabilizer} of \( x_0 \), and denoted $\Stab_\alpha(x_0)$.
	\end{definition}
	If $x_1$ is another vertex in the same connected component of $\Schreier(\alpha)$, the stabilizers of \( x_0 \) and $x_1$ are conjugate:
	one has
	\[
		\Stab_\alpha(x_0) = \Psi(c) \, \Stab_\alpha(x_1) \, \Psi(c)\inv ,
	\]
	where \( c \) is any path from \( x_0 \) to $x_1$.
	If the pre-action \( \alpha \) is saturated, i.e. it is a genuine \( \Gamma \)-action, 
	then $X\curvearrowleft^\alpha \Gamma$ is isomorphic to $\Stab_\alpha(x_0)\bs \Gamma \curvearrowleft \Gamma$
	and $\Stab_\alpha(x_0)$ is the usual stabilizer of \( x_0 \) with respect to the action \( \alpha \).

	\begin{proposition}[Stabilizer of a one orbit free extension]
	    \label{prop: stabilizer of a one orbit free extension}
		\label{prop: stab any one orbit free extension}
		Let \( \alpha_0 \) be a non-saturated pre-action and
		\( \alpha \) be a one orbit free extension of \( \alpha_0 \) from $(x,L_x)$ to $(y,L_y)$.
		Set $\varepsilon = 1$ if this extension is positive and $\varepsilon = -1$ if it is negative.
		\begin{enumerate}
			\item If \( \alpha_0 \) has finite phenotype, 
			then the \( \alpha \)-stabilizer of the basepoint \( x_0 \) is generated by the \( \alpha_0 \)-stabilizer and the element
			$\Psi(c)\left(t^\varepsilon b^{L_y} t^{-\varepsilon}\right)\Psi(c)^{-1}$,
			where \( c \) is any path from \( x_0 \) to \( x \) in $\Schreier(\alpha_0)$. In formula:
			\[
				\Stab_\alpha(x_0)=\la \Stab_{\alpha_0}(x_0), \Psi(c)\left(t^\varepsilon b^{L_y} t^{-\varepsilon}\right)\Psi(c)^{-1}\ra .
			\]

			\item If the extension \( \alpha \) is maximal, or if the phenotype is infinite,
			then the stabilizers coincide:
			\[
				\Stab_\alpha(x_0)=\Stab_{\alpha_0}(x_0).
			\] 
		\end{enumerate}
	\end{proposition}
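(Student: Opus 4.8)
The plan is to analyze the fundamental group of $\Sch(\alpha)$ in terms of that of $\Sch(\alpha_0)$, using that $\alpha$ is obtained by attaching a single new $\beta$-orbit $y\la\beta'\ra$ to $x$ via a new pair of $t^{\pm\varepsilon}$-labelled edges. Write $W$ for the connected component of $x_0$ in $\Sch(\alpha_0)$ and $W'$ for the corresponding component in $\Sch(\alpha)$; the new vertices all lie in $W'$. Fix a path $c$ from $x_0$ to $x$ in $\Sch(\alpha_0)$, so that $\Psi(c)$ conjugates $\Stab_{\alpha_0}(x_0)$ to $\Stab_{\alpha_0}(x)$, and it suffices to identify $\Stab_\alpha(x)$ as a subgroup of $\Gamma$; the general statement then follows by conjugating by $\Psi(c)$.

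First I would treat (1), assuming finite phenotype (so $L_x, L_y<\infty$). Consider the spanning tree of $W'$ obtained from a chosen spanning tree of $W$ by adjoining the new $t^\varepsilon$-edge from $x$ to $y$ together with a spanning path of the new orbit $y\la\beta'\ra$ (a line segment of $L_y-1$ consecutive $b$-edges). A standard description of $\pi_1$ of a graph in terms of a spanning tree shows that $\pi_1(W',x)$ is generated by $\pi_1(W,x)$ together with the single extra generator corresponding to the one $b$-edge of the new orbit that closes it into a cycle, namely the $b$-edge from $y(\beta')^{L_y-1}$ to $y$. Reading the label of the loop at $x$ that goes out along the new $t^\varepsilon$-edge, travels around the new orbit, and comes back, one gets exactly $t^\varepsilon b^{L_y} t^{-\varepsilon}$ (for $\varepsilon=+1$ the new orbit has size $L_y=\trrule(L_x,+)$ and the equivariance $x\beta^{jn}\tau=y(\beta')^{jm}$ makes the return edge land correctly; the negative case is symmetric). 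Applying $\Psi$ and then conjugating by $\Psi(c)$ yields the claimed generator $\Psi(c)\bigl(t^\varepsilon b^{L_y}t^{-\varepsilon}\bigr)\Psi(c)\inv$, and since $\Psi$ sends $\pi_1(W,x)$ onto $\Stab_{\alpha_0}(x)$, we get the asserted presentation of $\Stab_\alpha(x_0)$.

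For (2), the point is that under the stated hypotheses the extra generator $t^\varepsilon b^{L_y}t^{-\varepsilon}$ already lies in the subgroup generated by $\Stab_{\alpha_0}(x_0)$ — in fact it is trivial in $\Gamma$, or more precisely it equals a word already represented by a loop in $W$. In the infinite-phenotype case $L_x=L_y=\infty$, the new orbit $y\la\beta'\ra$ is a bi-infinite line and carries no cycle, so $\pi_1(W',x)=\pi_1(W,x)$ and nothing is added. In the maximal case, $L_y=\trrule(L_x,\varepsilon)$ is the maximal label, and one checks from the relation $tb^mt\inv=b^n$ (equivalently, $t^\varepsilon b^{L_y}t^{-\varepsilon}$ is, for $\varepsilon=+1$, of the form $tb^{\abs m L_x/\gcd(L_x,n)}t\inv$) that this element already lies in $\la b\ra$ and indeed equals $b^{\pm n L_x/\gcd(L_x,n)}$ via the defining relation applied $L_x/\gcd(L_x,n)$ times; but $b^{L_x}$ (hence this power of $b$) is the label of a loop at $x$ inside the orbit $x\la\beta\ra$ already present in $W$ when $L_x$ is finite, and when $L_x=\infty$ the orbit is a line so $L_y=\infty$ and we are in the previous case. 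Either way the new generator is redundant, giving $\Stab_\alpha(x_0)=\Stab_{\alpha_0}(x_0)$.

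The main obstacle I anticipate is bookkeeping the label of the closing loop correctly, i.e.\ verifying that going out along $t^\varepsilon$, once around the new orbit, and back along $t^{-\varepsilon}$ really reads $t^\varepsilon b^{L_y}t^{-\varepsilon}$ and that in the maximal case this word is forced into $\la b\ra$ by the single defining relation — this is where the precise form \eqref{eq: max rule} of the maximum rule and the HNN relation interact, and it must be checked separately for $\varepsilon=+1$ and $\varepsilon=-1$. The $\pi_1$-of-a-graph argument itself is routine once the right spanning tree is fixed.
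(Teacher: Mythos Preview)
Your overall strategy---reduce to basepoint $x$ by conjugating, compare $\pi_1(\Sch(\alpha),x)$ with $\pi_1(\Sch(\alpha_0),x)$, then apply $\Psi$---is exactly the paper's approach, and your treatment of the maximal case in (2) is correct and matches the paper's computation. However, there is a genuine gap in your description of $\Sch(\alpha)$.

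You write that $\alpha$ is obtained by attaching the new orbit ``via a new pair of $t^{\pm\varepsilon}$-labelled edges'', and conclude that $\pi_1(W',x)$ is generated by $\pi_1(W,x)$ together with a \emph{single} extra generator. This is not right: in a one orbit free extension the partial bijection $\tau$ is extended on the entire $\beta^n$-orbit of $x$ (in the positive case), via $x\beta^{jn}\tau = y(\beta')^{jm}$ for all $j$. So the Schreier graph acquires not one $t$-edge but $L_x/\gcd(L_x,n)$ of them (infinitely many when $L_x=\infty$). With your spanning tree, each of these extra $t$-edges contributes a new generator of $\pi_1(W',x)$: the loop at $x$ reading $b^{jn}\, t\, b^{-jm}\, t^{-1}$ (and symmetrically in the negative case). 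These are precisely the cycles the paper calls $c_j$, labelled $tb^{jm}t^{-1}b^{-jn}$.

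The reason this does not damage the final statement is that each such word is the defining relator of $\BSo(m,n)$ and hence has trivial image under $\Psi$. But you must say this. In particular, your argument for the infinite-phenotype case is currently wrong as stated: it is \emph{not} true that $\pi_1(W',x)=\pi_1(W,x)$ when the new orbit is a bi-infinite line---there are infinitely many new $t$-edges and hence infinitely many new generators---but it \emph{is} true that they all die under $\Psi$, which is what gives $\Stab_\alpha(x)=\Stab_{\alpha_0}(x)$. Once you account for these extra $t$-edges and observe that the corresponding loops map trivially, your proof is complete and coincides with the paper's.
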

	\begin{proof} 
		Up to conjugating the situation by $\Psi(c)$,
		it is equivalent to prove
		\(\Stab_\alpha(x)=\la \Stab_{\alpha_0}(x), t^\varepsilon b^{L_y} t^{-\varepsilon} \ra\) in Item 1,
		respectively \(\Stab_\alpha(x)=\Stab_{\alpha_0}(x)\) in Item 2.
		Clearly, the free group \(\pi_1(\Schreier(\alpha), x)\) is obtained from $\pi_1(\Schreier(\alpha_0), x)$ by adding the following generators:
		\begin{itemize}
			\item if the extension is positive, the cycles \( c_j \), for \(j\in  \Z\), based at \( x \) and labelled by $t b^{jm} t^{-1} b^{-jn}$;
			\item if the extension is negative, the cycles \( c_j' \), for \(j\in  \Z\), based at \( x \) and labelled by $t\inv b^{jn} t b^{-jm}$;
			\item if the phenotype is finite, the cycle \( c \) based at \( x \) and labelled by $t^\varepsilon b^{L_y} t^{-\varepsilon}$.
		\end{itemize}
		(these cases are not mutually exclusive). 
		Notice that all the \(c_j\) have trivial image under $\Psi_{x_0}$,
		hence \(\Stab_\alpha(x)=\Stab_{\alpha_0}(x)\) if the phenotype is infinite, concluding the proof in this case.

		If the phenotype is finite, 
		we get \(\Stab_\alpha(x)=\la \Stab_{\alpha_0}(x), t^\varepsilon b^{L_y} t^{-\varepsilon} \ra\),
		so Item 1 is established.
		Finally, if the extension is moreover maximal, 
		it suffices to show that $t^\varepsilon b^{L_y} t^{-\varepsilon}$ 
		already belongs to $\Stab_{\alpha_0}(x_0)$.
		In the positive case, one has \( L_y=m\frac{L_x}{\gcd(L_x,n)} \), 
		so we have in \( \Gamma \):
		\[\left(t b^{L_y} t^{-1}\right)=(t b^mt^{-1})^{\frac{L_x}{\gcd(L_x,n)} }=(b^n)^{\frac{L_x}{\gcd(L_x,n)} }
		=(b^{L_x})^{\left(\frac{n}{\gcd(L_x,n)}\right)}.\]
		As \(b^{L_x}\) belongs to \( \Stab_{\alpha_0}(x) \), this case is over.
		The negative case is similar.
	\end{proof}

	\begin{corollary}\label{cor: maximal forest sat stabilizer}
		Let $(\alpha_0, x_0)$ be a pointed transitive pre-action
		and \( \alpha \) be the associated maximal forest saturation.
		Then $\Stab_{\alpha}(x_0)=\Stab_{\alpha_0}(x_0)$. In particular, the maximal forest saturation identifies with $\Stab_{\alpha_0}(x_0)\bs \Gamma \curvearrowleft \Gamma$. 
	\end{corollary}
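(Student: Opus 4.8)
The plan is to obtain Corollary~\ref{cor: maximal forest sat stabilizer} from Proposition~\ref{prop: stabilizer of a one orbit free extension} together with Remark~\ref{rem: saturation by sequence of one orbit extensions}, which tells us that a maximal forest saturation is a (possibly transfinite, but in the transitive case countable) increasing union of one orbit free extensions. Since $(\alpha_0,x_0)$ is a pointed \emph{transitive} pre-action, its Schreier graph (equivalently its Bass-Serre graph) is connected, hence countable, so Remark~\ref{rem: saturation by sequence of one orbit extensions} applies: there is a chain $\alpha_0\subseteq\alpha_1\subseteq\cdots$ of one orbit free extensions, each maximal in the sense that it satisfies the maximum rule, with $\alpha=\bigcup_{i\ge 0}\alpha_i$.

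First I would observe that the basepoint $x_0$ lies in $X_0\subseteq X_i$ for every $i$, and that each $\alpha_{i+1}$ is a \emph{maximal} one orbit free extension of $\alpha_i$, hence by Item~2 of Proposition~\ref{prop: stabilizer of a one orbit free extension} (applied regardless of the phenotype: in the finite-phenotype case we use maximality, in the infinite-phenotype case it is automatic) we get $\Stab_{\alpha_{i+1}}(x_0)=\Stab_{\alpha_i}(x_0)$ for all $i$. By induction, $\Stab_{\alpha_i}(x_0)=\Stab_{\alpha_0}(x_0)$ for all $i$.

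Next I would pass to the union. On the one hand $\Stab_{\alpha_0}(x_0)=\Psi_{x_0}(\pi_1(\Sch(\alpha_0),x_0))\subseteq\Psi_{x_0}(\pi_1(\Sch(\alpha),x_0))=\Stab_{\alpha}(x_0)$, since $\Sch(\alpha_0)$ is a subgraph of $\Sch(\alpha)$ and $\Psi$ is compatible with this inclusion. On the other hand, any element of $\Stab_\alpha(x_0)$ is represented by a loop in $\Sch(\alpha)$ based at $x_0$; such a loop is a finite path, hence lies entirely in $\Sch(\alpha_i)$ for some $i$ large enough, so its $\Psi$-image lies in $\Stab_{\alpha_i}(x_0)=\Stab_{\alpha_0}(x_0)$. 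Therefore $\Stab_{\alpha}(x_0)=\Stab_{\alpha_0}(x_0)$.

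Finally, since $\alpha$ is saturated it is a genuine $\Gamma$-action, and by the discussion following Definition~\ref{stabilizer of pre-action} the pointed action $(\alpha,x_0)$ is isomorphic to $\Stab_\alpha(x_0)\bs\Gamma\curvearrowleft\Gamma = \Stab_{\alpha_0}(x_0)\bs\Gamma\curvearrowleft\Gamma$, which is the ``in particular'' clause. I do not expect any serious obstacle here; the only point requiring a little care is making sure that Remark~\ref{rem: saturation by sequence of one orbit extensions} does give a chain of \emph{maximal} one orbit free extensions when $\alpha$ is the maximal forest saturation (it does, since each forest vertex receives the maximum-rule label from its parent), and that loops are finite so the direct-limit argument for $\Psi$-images goes through.
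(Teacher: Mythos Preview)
Your proposal is correct and follows essentially the same approach as the paper: both invoke Remark~\ref{rem: saturation by sequence of one orbit extensions} to write the maximal forest saturation as an increasing union of maximal one orbit free extensions, then apply Proposition~\ref{prop: stabilizer of a one orbit free extension}(2) at each step. You are slightly more explicit than the paper about the passage to the union via the finite-loop argument, but this is a minor elaboration rather than a different route.
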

	\begin{proof} 
		As in Remark \ref{rem: saturation by sequence of one orbit extensions},
		the maximal forest saturation is obtained by a sequence
		\[
			\alpha_0 \subseteq \alpha_1 \subseteq \cdots \subseteq \alpha_n \subseteq \alpha_{n+1} \subseteq \cdots
		\]
		of \textbf{maximal} one orbit free extensions such that $\alpha = \bigcup_{i\geq 0} \alpha_i$.
		By Proposition~\ref{prop: stab any one orbit free extension} (item 2), the sequence of natural injections 
		\[\Stab_{\alpha_0}(x_0)\hookrightarrow \Stab_{\alpha_1}(x_0)\hookrightarrow \cdots \hookrightarrow \Stab_{\alpha_n}(x_0)\hookrightarrow \Stab_{\alpha_{n+1}}(x_0)\hookrightarrow \cdots\]
		consists only of identity maps. Thus $\Stab_{\alpha}(x_0)=\Stab_{\alpha_0}(x_0)$.
	\end{proof}

    More generally, Proposition~\ref{prop: stabilizer of a one orbit free extension}
    allows us to describe the stabilizer of any forest saturation $\alpha$ of $\alpha_0$ satisfying a transfer rule:
    just write the saturation as a sequence of one-orbit free extensions as before;
    then Proposition~\ref{prop: stabilizer of a one orbit free extension}
    provides a list of generators to add to \(\Stab_{\alpha_0}(x_0)\) so as to obtain \(\Stab_{\alpha}(x_0)\).

\subsection{Leaving finite pre-actions through their maximal forest saturation}

The following technical lemma underlies both our results 
on high transitivity and on high topological transitivity. 
It allows us to get out of any finite transitive pre-action
through its maximal forest saturation, with some 
additional control on the labels that we end up at.

Even better, we can leave \emph{simultaneously} finitely many
finite transitive pre-actions.
The key observation, already apparent in \cite[Claim~3.5]{moonHighlyTransitiveActions2013},
is that once  a reduced path has arrived in the maximal forest saturation,
it will stay there (see Claim \ref{claim: no backtracking once in Fi}).
This allows us to inductively construct the path we seek, working 
out each pre-action one by one.

\begin{lemma}
\label{lem: leaving the seed uniformly}
Let $(\alpha_1^0,x_1),$$\dots,(\alpha_k^0,x_k)$ 
be transitive non-saturated pointed pre-actions with finite Bass-Serre graphs.
Let $\alpha_1$,\dots, $\alpha_k$ be the maximal forest saturations of these pre-actions, with underlying sets $X_1,\dots, X_k$.
Recall that $\Tree$ is the Bass-Serre tree of $\BSo(m,n)$.
For every $i\in\{1,\dots,k\}$, consider the graph morphism $\pi_i: \Tree\to \BSe(\alpha_i)$ associated to the \( \Gamma \)-equivariant map $\hat\pi_i:\Gamma\to X_i$ taking $\id$ to $x_i$. 

Then there is a reduced path \( c \) in $\Tree$ from the base vertex $\la b\ra$ to a vertex $g \la b \ra$, whose last edge $f$ is positive and such that for all $i\in\{1,\dots,k\}$:
\begin{enumerate}[label=(\arabic*)]
    \item\label{item: separation in mfs} the edge $\pi_i(f)$ separates $\BSe(\alpha_i^0)$ from $\vv_i \coloneqq \pi_i(g\la b\ra)=x_i\alpha_i(g\la b\ra)$;
       \item\label{item: label in mfs} if the phenotype $q_i$ of $\alpha_i^0$ is finite, then the label of the terminal vertex $\vv_i$ satisfies
    \\$\bullet$
    $\abs{L(\vv_i)}_p=\abs m_p$ for all prime $p$
        such that $\abs{m}_p<\abs{n}_p$,
        and
       \\$\bullet$ $\abs{L(\vv_i)}_p=\max(\abs{q_i}_p,\abs m_p)$ for all prime $p$
        such that $\abs{m}_p=\abs{n}_p$.  
\end{enumerate}
Moreover, any extension of \( c \) using only positive edges satisfies the same conclusions.
\end{lemma}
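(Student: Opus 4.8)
The plan is to recall the shape of the maximal forest saturation and then build \(c\) by induction on \(k\), treating the pre-actions \(\alpha_i^0\) one at a time; the point is that \ref{item: separation in mfs} and \ref{item: label in mfs} become \emph{stable under further positive extensions} once the path is deep enough inside the forest \(\mathcal F_i\). Recall that \(\BSe(\alpha_i)=\BSe(\alpha_i^0)\sqcup \mathcal F_i\), where \(\mathcal F_i\) is a forest each of whose components is attached to the finite seed \(\BSe(\alpha_i^0)\) by a single pair of opposite edges at a root. As \(\alpha_i^0\) is not saturated, \(\mathcal F_i\) is nonempty; and since every vertex created by a one-orbit free extension has total degree \(1<\gcd(L,n)+\gcd(L,m)\), hence is itself unsaturated, \(\mathcal F_i\) is infinite. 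From Remark~\ref{rmk: degree in max forest sat}, the saturation condition \eqref{eq: deg max for saturated} and the description of one-orbit free extensions, one sees that every vertex of \(\mathcal F_i\) has at least one outgoing positive edge to a child (namely the child produced by a positive one-orbit extension there), so one may descend along positive edges in \(\mathcal F_i\) indefinitely.

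\textbf{Label stabilization.} By \eqref{eq:p adic transfert} together with the maximum rule \eqref{eq: max rule}, a positive edge sends a label \(L\) to a label \(L'\) with \(\abs{L'}_p=\abs{m}_p+\max(\abs{L}_p-\abs{n}_p,0)\). Iterating: for a prime \(p\) with \(\abs{m}_p<\abs{n}_p\) this quantity strictly decreases until it reaches \(\abs{m}_p\) and then stays there; for \(p\) with \(\abs{m}_p=\abs{n}_p\) it becomes \(\max(\abs{m}_p,\abs{L}_p)\) after one step and then stays. Since every vertex of \(\BSe(\alpha_i)\) has phenotype \(q_i\), the definition of the phenotype (Section~\ref{sec:phenotype}) gives \(\max(\abs{m}_p,\abs{L}_p)=\max(\abs{m}_p,\abs{q_i}_p)\) for such \(p\). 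Hence there is \(N\in\N\), depending only on \(m,n\) and the finitely many labels occurring in the seeds, such that if \(q_i\) is finite then any vertex of \(\mathcal F_i\) reached by descending along at least \(N\) consecutive positive edges has a label satisfying the two bullet points of \ref{item: label in mfs}; this persists under further positive descents, and primes \(p\) with \(\abs{m}_p>\abs{n}_p\) are unconstrained (as they must be, since along positive edges their valuation grows without bound).

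\textbf{Inductive construction.} Let \(c_0\) be the trivial path at \(\la b\ra\). Assume \(c_{i-1}\) is a reduced path from \(\la b\ra\) ending (for \(i\geq 2\)) with a positive edge and such that for every \(j<i\) the image \(\pi_j(c_{i-1})\) ends at a vertex of \(\mathcal F_j\) reached along at least \(N\) positive edges, its terminal edge being a positive forest edge pointing away from the seed. To build \(c_i\), start from the terminal vertex \(u\) of \(c_{i-1}\) and append a reduced path whose first edge is \emph{not} the reverse of the terminal edge of \(c_{i-1}\) (there are at least \(\abs{m}+\abs{n}-1\geq 3\) choices) and which --- using that \(\BSe(\alpha_i)\) is connected and that \(\pi_i\) is surjective and edge-surjective at each vertex --- reaches a \(\pi_i\)-preimage of the root of some component of \(\mathcal F_i\); since a reduced path in a tree never revisits a vertex, the concatenation is reduced and still has \(c_{i-1}\) as a prefix. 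Then append at least \(N\) positive edges descending through \(\mathcal F_i\) (possible by the first paragraph, and reduced because at a forest vertex the unique edge of \(\BSe(\alpha_i)\) heading back toward the seed lifts to the reverse of the edge just traversed). Call the result \(c_i\). Then \ref{item: separation in mfs}--\ref{item: label in mfs} hold for \(j=i\) by construction, and they are preserved for \(j<i\) because, by Claim~\ref{claim: no backtracking once in Fi}, \(\pi_j\) of the appended path stays in \(\mathcal F_j\) and keeps descending, so the terminal vertex only moves deeper --- which only reinforces \ref{item: separation in mfs} and \ref{item: label in mfs}. Set \(c\coloneqq c_k\). Its last edge \(f\) is positive, and for each \(j\) the edge \(\pi_j(f)\) is a positive forest edge whose removal from \(\BSe(\alpha_j)\) separates the seed (on the root side) from \(v_j=\pi_j(\target f)\) (on the leaf side), giving \ref{item: separation in mfs}; when \(q_j\) is finite, \ref{item: label in mfs} is the content of the label stabilization. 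The ``moreover'' is immediate: any positive extension \(c'\) of \(c\) is still reduced, and by Claim~\ref{claim: no backtracking once in Fi} each \(\pi_j(c')\) keeps descending in \(\mathcal F_j\) along positive edges, so the same two conclusions hold for \(c'\) with a deeper terminal vertex.

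\textbf{Main obstacle.} The conceptual heart is Claim~\ref{claim: no backtracking once in Fi}: once a reduced path in \(\Tree\) has its \(\pi_i\)-image inside \(\mathcal F_i\), it cannot climb back out, because the only edge at a forest vertex pointing toward the seed lifts, at the relevant vertex of \(\Tree\), to the reverse of the incoming edge. Granting this, what remains is bookkeeping: making each inductive extension reduced without ever shortening the prefix \(c_{i-1}\) (handled via the first-edge choice and the absence of backtracking in trees), and carrying out the elementary but slightly delicate \(p\)-adic computation that produces the precise two bullet points of \ref{item: label in mfs}.
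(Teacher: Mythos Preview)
Your approach is essentially the paper's: the same key Claim~\ref{claim: no backtracking once in Fi}, the same inductive scheme of threading the path through the forests one by one, and the same positive-edge stabilization of labels. There is, however, a genuine gap in your preservation argument for~\ref{item: label in mfs}. You assert that once~\ref{item: label in mfs} holds for some \(j<i\), it is ``reinforced'' because the appended path only moves the terminal vertex deeper in \(\mathcal F_j\). But going deeper need not preserve~\ref{item: label in mfs}: the wandering portion at step \(i\) (the part reaching a root of \(\mathcal F_i\)) may contain negative edges, and under the maximum rule a single negative descent sends \(\abs{L}_p=\abs{m}_p\) to \(\abs{n}_p\) for any prime with \(\abs{m}_p<\abs{n}_p\), destroying the first bullet. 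Iterated negative edges drive this valuation up linearly, and your \(N\) --- fixed in advance from the seed labels alone --- does not bound the wandering lengths encountered during the induction, so the final \(N\) positive edges may be too few to bring the valuation back down to \(\abs{m}_p\) in \(\mathcal F_j\). (For primes with \(\abs{m}_p=\abs{n}_p\) your stability claim is correct; the problem is solely with the strict-inequality primes.) The paper avoids this by performing the induction with no label control whatsoever and only \emph{afterwards} appending a single sufficiently long positive ray; at that moment all \(\pi_j\)-images already lie in their respective forests, the starting labels are fixed finite numbers, and one length works uniformly. Your argument is easily repaired the same way: postpone the positive stabilization to after the induction.

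A smaller point: your justification for reaching a \(\pi_i\)-preimage of a root of \(\mathcal F_i\) while avoiding a prescribed first edge is elliptic --- ``connectivity plus edge-surjectivity'' does not by itself guarantee that the desired preimage lies in the chosen half-tree of \(\Tree\), nor that the concatenation stays reduced. The paper handles this cleanly by working in the quotient \(\BSe(\alpha_i)\): it extends \(\pi_i\) of the \emph{last} edge of \(c_{i-1}\) to a reduced path in \(\BSe(\alpha_i)\) ending in the forest (via an external lemma from \cite{fima_characterization_HT_2022}) and then lifts, which makes reducedness of the concatenation automatic.
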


\begin{proof}
    We first take care of condition \ref{item: separation in mfs}.
    For every $i\in\{1,...,k\}$, denote by $\mathcal{F}_i$ the forest that was added to the
    Bass-Serre graph of $\alpha_i^0$
    when building its maximal forest saturation $\alpha_i$. 
    We rely on the following crucial claim.
    
    \begin{claim}\label{claim: no backtracking once in Fi}
                Let $i\in\{1,\dots,k\}$.
                If $\pathinT$ is a reduced path in $\Tree$ starting at the base vertex $\la b \ra$ and whose projection $\pi_i(\pathinT)$ contains an edge $\pi_i(e)$ in $\mathcal{F}_i$, then the subpath of $\pi_i(\pathinT)$
                starting with $\pi_i(e)$ has no backtracking, hence it follows a geodesic in $\mathcal{F}_i$.
    \end{claim}
            \begin{cproof}
                Without loss of generality, we may and will assume that $e$ is the first edge in $\pathinT$ such that $\pi_i(e)$ lies in $\mathcal{F}_i$.
                This implies that the source $\source(\pi_i(e))$ lies in $\BSe(\alpha_{i}^0)$.
                
                Recall from \cite{CGLMS-22} (see the proof of Lemma 4.21),
                that the maximal forest saturation is constructed by iteratively adding at each step as many new vertices $v$ as possible so that 
                \begin{itemize}
                    \item $v$ is connected to the graph constructed at the previous step by a unique edge $a$ and   
                    
                    \item the label of $v$ satisfies $L(v) = m L(a)$ if $a$ is positive and $L(v) = n L(a)$ if $a$ is negative.
                    
                \end{itemize}
                The second item implies $\degin{v} = m$ if $a$ is positive, and  $\degout{v} = n$ if $a$ is negative (however, one may fail to have both $\degin{v} = m$ and  $\degout{v} = n$).
                Since $\Tree$ is a regular tree with incoming degree $m$ and outgoing degree $n$, this may be reformulated as: for any pull-back $\tilde v\in \Tree$ of $v$, the map $\pi_i$ is injective on the set $E_{\tilde v, a}$ of edges attached to $\tilde v$ which have the same orientation as $a$. 
                
                Assume by contradiction that $\pi_i(\pathinT)$ does backtrack at some point.
                Let $e_1, e_2$ be the first pair of consecutive edges of $\pathinT$ after $e$ whose $\pi_i$-images form a backtracking and 
                let $\tilde v \coloneqq \target(e_1) = \source(e_2)$ be their common vertex. 
                Let $v \coloneqq \pi_i(\tilde v)=\target(\pi_i(e_1)) = \source(\pi_i(e_2))$. 
                Since $\pi_i(\pathinT)$ has no backtracking between $\pi_i(e)$ and $\pi_i(e_1)$,
                the source $\source (\pi_i(e_1))$ is in the same connected component as the image of the basepoint in $\BSe(\alpha_i)\smallsetminus  \pi_i(e_1)$, 
                i.e.\ the target $v = \target (\pi_i(e_1))$ appears after $\source (\pi_i(e_1))$ in the construction of the maximal forest saturation
                and $\pi_i(e_1)$ is the unique edge connecting $v$ to the previously constructed stages.
                
                Since $\pi_i(e_1)=\pi_i(\bar e_2)$, the edges $e_1,\bar e_2$ have the same orientation 
                and they belong to $E_{\tilde v, e_1}$, a set on which $\pi_i$ is injective.
                This contradicts the fact that $\pathinT$ is reduced. 
            \end{cproof}

            We can now construct 
            the path $\pathinT$ satisfying condition 1 as the last term of an inductive construction $(\pathinT_i)_{i=0}^{k}$ with $\pathinT_{i}$ extending $\pathinT_{i-1}$
            so that for all
            $i\in \{1, \dots, k\}$ and $j\in \{1, \ldots, i\}$, the path $\pi_j(\pathinT_i)$ ends in $\mathcal F_j$.
            The first term $\pathinT_0$ is chosen to be any edge with origin $\la b \ra$ in $\Tree$.
            For the inductive step, assume $\pathinT_i$ has been constructed for some  $i< k$. 
            Let $e_i$ be the last edge of $\pathinT_i$.
            By \cite[Lemma 2.17]{fima_characterization_HT_2022}
            we can extend $\pi_{i+1}(e_i)$ to a reduced path $\pi_{i+1}(e_i) q_{i+1}$ in $\BSe(\alpha_{i+1})$ 
            whose terminal edge disconnects the graph $\BSe(\alpha_{i+1})$ and points towards a tree.
            Since $\BSe(\alpha_{i+1})$ has no vertex of degree $\leq 1$, this tree is unbounded. 
            As the complement of $\mathcal{F}_{i+1}$ is finite, we can extend $q_{i+1}$ if necessary so that it ends in $\mathcal{F}_{i+1}$.

            Let $e_i\tilde q_{i+1}$ be a lift in $\Tree$ of $\pi_{i+1}(e_i) q_{i+1}$ starting by $e_i$, 
            then $\pathinT_{i+1} \coloneqq \pathinT_i \tilde q_{i+1}$ has the following properties:
            \begin{itemize}
                \item $\pathinT_{i+1}$ is a reduced path;
                \item $\pi_{i+1}(\pathinT_{i+1})$ ends in $\mathcal{F}_{i+1}$ by construction;
                \item for every $j\in \{1, \ldots, i\}$, the path $\pi_j(\pathinT_{i+1})$ ends in $\mathcal{F}_j$, by Claim~\ref{claim: no backtracking once in Fi}.
            \end{itemize}

            This concludes the construction of $\pathinT = \pathinT_{k}$ if the last edge of $\pathinT_{k}$ is positive. 
            If not, we extend $\pathinT_{k}$ to $\pathinT=\pathinT_{k}f$ by adding a last positive edge $f$ without reduction: this is possible since the in-degree and the out-degree of every vertex in $\Tree$ is $\geq 2$ when $\vert m\vert ,\vert n\vert>1$. Claim \ref{claim: no backtracking once in Fi} ensures that $\pathinT_{k}f$ continues to satisfy condition 1, namely that $\pi_i(f)\in \mathcal{F}_i$ for all $i\in\{1,\dots,k\}$.

            If the phenotype is infinite, we are done since condition 2 is empty in this case.
            Let us thus assume that the phenotype is finite. 
            Notice that by Claim \ref{claim: no backtracking once in Fi} any reduced extension of $\pathinT$  keeps satisfying condition 1. Thus it suffices to extend $\pathinT$ to a longer reduced path such that it
            also satisfies condition 2.

        Consider first a geodesic ray $r$ consisting of positive edges and starting at the terminal vertex $\target(\pathinT)$ of $\pathinT$.
        The path $\pathinT r$ and its projection $\pi_i(\pathinT r)$ are reduced,
        since the last edge in $\pathinT$ was positive.
        For every $i$, the projection $\pi_i(r)$ lies in $\mathcal{F}_i$ and for each edge $e$ of $\pi_i(r)$, 
        the source of $e$ was built before its target in the maximal forest saturation.

        For each edge $e$ of $\pi_i(r)$, and for any prime $p$ such that $\abs{m}_p\leq\abs{n}_p$, we claim that:
        \begin{equation}\label{eq: transfermax}
        \abs{L(\target(e))}_p=\max\big(\abs m_p,\abs{L(\source(e))}_p-(\abs n_p-\abs m_p)\big).
        \end{equation}
        Indeed, recall the label transfer equation \eqref{eq:p adic transfert}: 
               \[
            \max(\abs{L(\source(e))}_p - \abs{n}_p,0)
            = \abs{L(e)}_p
            = \max(\abs{L(\target(e))}_p - \abs{m}_p, 0).
        \]
        If $\abs{L(\source(e))}_p \leq \abs{n}_p$, then  $\abs{L(e)}_p = 0$, hence the maximality in the construction of the maximal forest saturation gives
        $\abs{L(\target(e))}_p = \abs{m}_p$.
        If $\abs{L(\source(e))}_p > \abs{n}_p$, then $\abs{L(e)}_p = \abs{L(\source(e))}_p-\abs n_p > 0$, hence 
        Equation \eqref{eq:p adic transfert} gives 
        $\abs{L(\target(e))}_p = \abs{L(\source(e))}_p-\abs n_p+\abs m_p$.
        This proves Equation \eqref{eq: transfermax}.

        Now, for any edge $e$ in $\pi_i(r)$ and all prime $p$
        such     that $\abs{m}_p=\abs{n}_p$,
        we have
 \begin{align*}  \abs{L(\target(e))}_p 
            & = \max\big(\abs m_p,\abs{L(\source(e))}_p\big),
            \text{ by Equation \eqref{eq: transfermax}}\\
            & = \max\big(\abs m_p,\abs{q_i}_p\big), 
            \text{ by definition of the phenotype}. 
        \end{align*}
        Finally, Equation \eqref{eq: transfermax}  yields that for any vertex $g\la b\ra$ far enough in the ray $r$, for all prime $p$
        such that $\abs{m}_p<\abs{n}_p$ and  all $i \in \{1, \ldots, k\}$
        we have
        $\abs{L(\pi_i(g\la b\ra))}_p=\abs m_p$.        
    
        So replacing $\pathinT$ by $\pathinT r'$, where $r'$ is any sufficiently long
        initial segment of $r$, we get that $\pathinT$ is the desired path.
        
        The last statement about extensions of $\pathinT$ by positive edges
        follows from Equation \eqref{eq: transfermax}, using the same argument as before.
        This finishes the proof of Lemma \ref{lem: leaving the seed uniformly}.
\end{proof}

\section{High topological transitivity results}

Recall that for any phenotype $q$ (finite or not), \( \PK_q(\BSo(m,n))  \) is a perfect compact space \cite[Remark 5.12]{CGLMS-22}.

\begin{theorem}[multiple topological transitivity]
	\label{thm: multiple topological transitivity}
	
	Let \( m,n \) be integers such that ${\abs m}, \abs n\geq 2$. Then for every phenotype $q\in \QQ_{m,n}$ the action by conjugation of \( \BSo(m,n)  \) on the invariant subspace \( \PK_q(\BSo(m,n))  \) is highly topologically transitive.
\end{theorem}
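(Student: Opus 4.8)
The plan is to deduce high topological transitivity of the $\BSo(m,n)$-action on $\PK_q$ directly from Lemma~\ref{lem: leaving the seed uniformly}, using the characterization of HTT via pairwise disjoint open sets (Remark~\ref{rmk: disjoint version for htt}, whose topological analog applies since each $\PK_q$ is perfect). Fix $d\geq 1$ and basic clopen sets $\Vc(\Ic_1,\Oc_1),\dots,\Vc(\Ic_{2d},\Oc_{2d})$, pairwise disjoint and nonempty; I must find $\gamma\in\Gamma$ with $\Vc(\Ic_j,\Oc_j)\cdot\gamma\cap\Vc(\Ic_{j+d},\Oc_{j+d})\neq\emptyset$ for all $j\leq d$. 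Since these sets are nonempty and open in $\PK_q$, I can pick for each $i$ a subgroup $\Lambda_i\in\Vc(\Ic_i,\Oc_i)\cap\PK_q$ which, by density of finitely supported data, I may assume is the stabilizer of the basepoint $x_i$ of a pointed transitive pre-action $(\alpha_i^0,x_i)$ with \emph{finite} Bass-Serre graph realizing the given constraints on a prefix of the Schreier graph; crucially I also arrange that $\Lambda_i$ has phenotype $q$, which is possible because each $\PK_q$ is topologically transitive hence nonempty and the local conditions only constrain a finite portion of the graph (here one uses the phenotype computations from \cite{CGLMS-22}, via Equation~\eqref{eq:p adic transfert}, to put the right $b$-orbit lengths near the basepoint).

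Next I apply Lemma~\ref{lem: leaving the seed uniformly} to the $2d$ pointed pre-actions $(\alpha_1^0,x_1),\dots,(\alpha_{2d}^0,x_{2d})$, obtaining a reduced path $c$ in $\Tree$ from $\la b\ra$ to some $g\la b\ra$, ending in a positive edge $f$, such that for every $i$ the edge $\pi_i(f)$ separates $\BSe(\alpha_i^0)$ from the terminal vertex $v_i=x_i\alpha_i(g\la b\ra)$, and such that the label of $v_i$ has a prescribed $p$-adic valuation depending only on $q$ (not on $i$). The point of the uniform label control is that, having moved into the maximal forest saturation of each $\alpha_i^0$ to the ``same'' labelled vertex, I can now \emph{splice}: I further extend $c$ (by positive edges only, which by the last sentence of Lemma~\ref{lem: leaving the seed uniformly} preserves both conclusions) so that the terminal vertices $v_1,\dots,v_{2d}$ all lie in the forest parts $\mathcal F_i$ at points where $\pi_i$ is locally injective, and then I choose $\gamma=\Psi(c_j)\,w\,\Psi(c_{j+d})\inv$ type elements, or more precisely build a single $\gamma\in\Gamma$ whose action glues the $j$-th terminal vertex of the maximal forest saturation of $\alpha_j$ to the $(j+d)$-th one of $\alpha_{j+d}$; since Corollary~\ref{cor: maximal forest sat stabilizer} tells us the maximal forest saturation of $(\alpha_i^0,x_i)$ is exactly $\Lambda_i\bs\Gamma\curvearrowleft\Gamma$, moving the basepoint of $\Lambda_i\bs\Gamma\curvearrowleft\Gamma$ to $v_i$ keeps us inside $\Vc(\Ic_i,\Oc_i)$ (the seed is untouched by item~\ref{item: separation in mfs}), so $\Lambda_i\cdot\Psi(c)\in\Vc(\Ic_i,\Oc_i)\cdot\gamma$ lands in the right set.

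Concretely, the mechanism for producing $\gamma$ is: because $v_j$ and $v_{j+d}$ carry $b$-orbits of the \emph{same} length (the uniform label of Lemma~\ref{lem: leaving the seed uniformly}\ref{item: label in mfs}, together with the $\abs m_p$-value at primes with $\abs m_p<\abs n_p$), and because both sit in the maximal forest saturation where the Bass-Serre tree maps in nicely, there is an element $\gamma\in\Gamma$ sending (the relevant lift in $\Tree$ of) $v_j$ to (that of) $v_{j+d}$ simultaneously for all $j$ — this is where one invokes transitivity of $\Gamma\acts\Tree$ on vertices of a fixed label together with the fact that we have freedom to keep extending the path $c$ by positive edges to synchronize, much as in the proof of Corollary~\ref{cor: htt from ht and tt for BS} but now uniformly. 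One then checks $\Lambda_j\cdot\gamma'\in\Vc(\Ic_{j+d},\Oc_{j+d})$ where $\gamma'$ is the corresponding conjugator, i.e.\ the orbit of $\Lambda_j$ meets $\Vc(\Ic_{j+d},\Oc_{j+d})$ after applying $\gamma$, giving $\Vc(\Ic_j,\Oc_j)\cdot\gamma\cap\Vc(\Ic_{j+d},\Oc_{j+d})\neq\emptyset$.

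The main obstacle is the synchronization/splicing step: Lemma~\ref{lem: leaving the seed uniformly} hands us, for each $i$ separately, a terminal vertex of controlled label inside the maximal forest saturation of $\alpha_i$, but to get a \emph{single} $\gamma\in\Gamma$ that simultaneously realizes all $d$ required incidences $\Vc(\Ic_j,\Oc_j)\cdot\gamma\cap\Vc(\Ic_{j+d},\Oc_{j+d})\neq\emptyset$ one must arrange the $2d$ excursions so that the group elements $\Psi$ reading off the paths are compatible. The resolution is exactly the inductive, one-pre-action-at-a-time construction already baked into Lemma~\ref{lem: leaving the seed uniformly} (via Claim~\ref{claim: no backtracking once in Fi}: once a reduced path enters a forest $\mathcal F_i$ it never leaves), so that extending the common path never disturbs the progress made on earlier pre-actions; combined with the uniform-label conclusion, this lets one push all terminal vertices arbitrarily deep into the forests with matching labels and then close up with one element of $\Gamma$. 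I would also need to double-check the phenotype-realizability of the seed pre-actions $\alpha_i^0$ in $\PK_q$ for every $q\in\QQ_{m,n}$, which follows from the constructions of \cite{CGLMS-22} recalled in Section~\ref{sec:phenotype} but deserves an explicit sentence.
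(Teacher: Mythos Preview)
Your proposal has a genuine gap at the ``splicing'' step, and the mechanism you describe cannot work as stated. You want to find a single $\gamma\in\Gamma$ and then check that $\Lambda_j\cdot\gamma'\in\Vc(\Ic_{j+d},\Oc_{j+d})$. But $\Lambda_j\cdot\gamma'$ is a conjugate of $\Lambda_j$: it is the stabilizer of some other point in the \emph{same} action $\Lambda_j\bs\Gamma\curvearrowleft\Gamma$, namely the maximal forest saturation of $\alpha_j^0$. That action contains the seed $\alpha_j^0$ and a forest glued to it; it contains no copy of $\alpha_{j+d}^0$ whatsoever, so there is no reason any conjugate of $\Lambda_j$ should satisfy the local constraints defining $\Vc(\Ic_{j+d},\Oc_{j+d})$. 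The sentence ``there is an element $\gamma\in\Gamma$ sending (the relevant lift in $\Tree$ of) $v_j$ to (that of) $v_{j+d}$'' does not help: $v_j$ and $v_{j+d}$ live in two \emph{different} Bass--Serre graphs $\BSe(\alpha_j)$ and $\BSe(\alpha_{j+d})$, and no element of $\Gamma$ relates points across distinct quotients of $\Tree$.

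What the paper actually does is \emph{construct new actions}. After using Lemma~\ref{lem: leaving the seed uniformly} to reach vertices $v_i$ in the forest parts, it restricts each $\alpha_i$ to a finite pre-action $\xi_i$ containing the seed and the path to $v_i$, and then for each $j\in\{1,\dots,d\}$ it \emph{welds} $\xi_j$ and $\xi_{j+d}$ together into a single new pre-action $\eta_j$, by adding a fresh $b$-orbit and defining $\tau$ so that a specific short word in $t,b$ carries $y_j$ to $y_{j+d}$. The resulting saturation $\tilde\eta_j$ contains \emph{both} seeds, so $[\tilde\eta_j,x_j]\in U_j$ and $[\tilde\eta_j,x_{j+d}]\in U_{j+d}$, and the welding word gives the common $\gamma$. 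This welding is the step that is entirely missing from your outline. A secondary issue: your claim that the terminal vertices all carry $b$-orbits of the \emph{same} length after Lemma~\ref{lem: leaving the seed uniformly} is false in finite phenotype, since that lemma only controls $\abs{L(v_i)}_p$ for primes with $\abs m_p\le\abs n_p$; the paper handles the remaining primes via an additional ``U-turn'' (extending by negative edges with the minimal rule) until all labels reach $q$, and only then performs the welding.
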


\begin{proof}
    	Let \( d \) be a positive integer and let $U_1,\dots, U_{2d}$ be nonempty open subsets of \( \PK_q(\BSo(m,n))  \).
	We want to prove that there exists $\gamma \in \BSo(m,n)$ such that,
	\[
		\forall i\in\{1,\dots,d\}, \quad U_i\cdot \gamma\cap U_{i+d}\neq\emptyset
	\]
	We split this proof into several steps.
	
	\paragraph{Step 1. Preliminary reductions.}	
	Up to shrinking the $U_i$'s, we may as well assume they are all of the form
	\[
		U_i=\Nc([\alpha_i,x_i],R)
	\]
	for some $R>0$, 
	i.e. all the (classes of) pointed transitive actions with the same $R$-ball of Schreier graph as some pointed transitive action $(\alpha_i, x_i)$ on an infinite set $X_i$.
	Recall that $X_i$ contains infinitely many \( b \)-orbits 
	(in other words, $\BSe(\alpha_i)$ is infinite)
	since we are in the perfect kernel.
	
	Then, for any $i\in\{1,\dots, 2d\}$, 
	let $B_i$ be the union of the \( b \)-orbits that meet the ball $B(x_i, R)$ in the Schreier graph of $\alpha_i$ 
	and take the restriction $\alpha_i^0 \coloneqq {\alpha_i}_{\restriction B_i}$.
	Notice that, as a strict subgraph of an $(m,n)$-graph, $\BSe(\alpha_i^0)$ has a non saturated vertex, thus $\alpha_i^0$ is not an action.
	We may moreover assume that 
	$\alpha_i$ is the maximal forest-saturation,
	(as defined in Section \ref{Sect: forest saturations}) of $\alpha_i^0$.
	Indeed, $B(x_i, R)$ is contained in $B_i$. 
	Therefore, replacing $\alpha_i$ by the maximal forest saturation of $\alpha_i^0$
	doesn't change the ball $B(x_i, R)$ in the Schreier graph, 
	hence doesn't change $\Nc([\alpha_i,x_i],R)$.

	\paragraph{Step 2. Going out of balls ``uniformly''.}	
	
	We are now in the situation of Lemma \ref{lem: leaving the seed uniformly}.
	So, let us
	consider the graph morphism $\pi_i:\Tree \to \BSe(\alpha_i)$ and
	pick a reduced path $\pathinT$ in $\Tree$, from $\la b\ra$ to a vertex $g \la b \ra$, given by this lemma.
	Notice that, up to changing the representative $g$ of the coset $g \la b \ra$,
	we may assume that its normal form $w$,
	as defined in Section~\ref{Subsect: connection to Bass-Serre theory}
	ends on the right by $t^{\pm 1}$.

	Notice also that $w$ defines a reduced path from $\id$ to $g$ in $\Cayley(\BSo(m,n))$.
	This path projects onto $\pathinT$, since $\pathinT$ is the unique reduced path from $\la b\ra$ to $g \la b \ra$ in $\Tree$ and $w$ is a normal form.
	Observe that, since the last edge of $\pathinT$ is positive,
	$w$ ends by a positive power of \( t \) .
	
	In $\Schreier(\alpha_i)$, consider the path from $x_i$ labeled by $w$, and its target $y_i\coloneqq x_i \alpha_i(g)$.
	Notice that its projection in $\BSe(\alpha_i)$ is the path $\pi_i(c)$.
	Set moreover $v_i \coloneqq \pi_i(\target(c))$, which is also the projection of $y_i$.
	
	\paragraph{Step 3. From actions to pre-actions.}
	    
	Let $\Gc_i$ be the subgraph of $\BSe(\alpha_i)$ consisting of the union of $\BSe(\alpha_i^0)$ with $\pi_i(c)$.
	By Condition \ref{item: separation in mfs} from Lemma \ref{lem: leaving the seed uniformly}, the vertex
	$v_i$ is disconnected from the rest of $\Gc_i$ by the last edge of $\pi_i(c)$.
	Moreover, by the maximal transfer rule, as the last edge of \( c \) is positive, 
	the label $L(v_i)$ is a multiple of $\abs{m}$ and hence $v_i$ has ingoing degree $\abs{m}$.
	
	By contrast, we have seen that $v_i$ has outgoing degree $1$ in $\Gc_i$.

    Consider the subgraph of $\Sch(\alpha_i)$ consisting of the \( b \)-orbits that are shrunk to vertices of $\Gc_i$ 
    and all the edges that are shrunk to edges of $\Gc_i$. 
    It is the Schreier graph of a pre-action that we denote $\xi_i$. 
    Observe that $\xi_i$ extends $\alpha_i^0$ and that its Bass-Serre graph is $\Gc_i$.

    The former discussion about $\Gc_i$ shows that
    the $\xi_i(b)$-orbit of $y_i$ does not intersect the domain of $\xi_i(t)$ 
    and intersects the target of $\xi_i(t)$ only along the $\xi_i(b)^{m}$-orbit of $y_i$.
	
	\paragraph{Step 4. Conclusion in case of infinite phenotype.}
	
	In this case, the orbits $y_i \xi_i(\la b \ra)$, for $i\in \{1, \ldots, 2d\}$, are moreover infinite.
	For each $i\in \{1, \ldots, d\}$,
	we shall weld $\xi_i$ and $\xi_{i+d}$ into a new transitive preaction $\eta_i$ as follows.
	The domain of $\eta_i$ is the disjoint union of those of $\xi_i$ and of $\xi_{i+d}$ and
	the bijection $\eta_i(b)$ is the disjoint union of $\xi_i(b)$ and $\xi_{i+d}(b)$.
	In order to define $\eta_i(t)$, let us start with $\eta_i(t) \coloneqq \xi_{i}(t) \sqcup \xi_{i+d}(t)$.
	Then, since $y_i\xi_i(b)^{kn}\notin \dom(\xi_i(t))$ and $y_{i+d}\xi_{i+d}(b)^{mk+1} \notin \rng(\xi_{i+d}(t))$, we can set
	\[
	    y_i\xi_i(b)^{kn}\eta_i(t)\coloneqq y_{i+d}\xi_{i+d}(b)^{mk+1}
	\] 
	for all $k\in\Z$.
	We get in particular $y_i\eta_i(t)=y_{i+d}\xi_{i+d}(b)=y_{i+d}\eta_i(b)$.
	
	Afterwards, we extend $\eta_i$ to a transitive \( \BSo(m,n)  \)-action $\tilde \eta_i$, e.g.\ using the maximal forest saturation.
	Since $\tilde \eta_i$ extends $\xi_i$, the pointed action $[\tilde \eta_i,x_i]$ belongs to $U_i = \Nc([\alpha_i,x_i],R)$, 
	and since $\tilde \eta_i$ extends $\xi_{i+d}$, the pointed action $[\tilde \eta_i,x_{i+d}]$ belongs to $U_{i+d} = \Nc([\alpha_{i+d},x_{i+d}],R)$.
	By construction, 
	\[
	x_i\tilde\eta_i(g)=y_i
	\text{ and } y_i\tilde\eta_i(t)\tilde\eta_i(b)\inv=y_{i+d}\text{ and }x_{i+d}\tilde\eta_i(g)=y_{i+d}.
	\]
	It follows that by letting 
	\[
	\gamma \coloneqq g tb\inv g\inv
	\] 
	we have $x_i\tilde \eta_i(\gamma)=x_{i+d}$ for every $i\in\{1,\dots,d\}$.
	Since $[\tilde \eta_i,x_i]\in U_i$ and $[\tilde \eta_i,x_{i+d}]\in U_{i+d}$, this proves that for all $i\in\{1,\dots,d\}$, we have $U_i\gamma\cap U_{i+d}\neq\emptyset$ as wanted.

	\paragraph{Step 4 bis. In case of finite phenotype: U-turn.}
	We restart from the end of Step 3 and, from now on, we assume that the phenotype is finite.
	Let $y_i^0\coloneqq y_i\cdot \xi_i(b)\neq y_i$.
	Define inductively a sequence of labels $(l^j_i)_j$ as follows:  $l^0_i \coloneqq L(v_i)$, 
	which is also the cardinality of the $\xi_i(b)$-orbit  of $y_i^0$,
	and then for all $j\geq 0$, $l^{j+1}_i$ is the smallest positive integer satisfying the equation   
	\begin{equation}\label{eq: transfer yj - y j+1}
		\frac{l_i^{j+1}}{\gcd(l_i^{j+1},n)}=\frac{l_i^{j}}{\gcd(l_i^j,m)}.
	\end{equation}

	\begin{claim} \label{claim: lij is stationary}
		From a certain rank $r$, we have $l_i^j = q$ for all $i$ and for all $j \geq r$ 
		(i.e., we eventually reach the phenotype for all $i$).
	\end{claim}
	Let us finish our argument before proving the claim: fix $r$ as above.
	We extend $\xi_i$ by first adding a collection of $r$ extra $\xi_i(b)$-orbits $(O_i^j)_{1\leq j\leq r}$, with each $O_i^j$  of cardinality $l_i^j$.
	Then pick a point $y_i^j$ in each $O_i^j$.
	The transfer equation~\eqref{eq: transfer yj - y j+1} guarantees that
	the $\xi_i(b^n)$-orbit of $y_{i+1}$ and the $\xi_i(b^m)$-orbit of $y_i$
	have the same cardinality.
	Moreover, as $\abs{m} \geq 2$, observe that $y_i$ and $y_i^0$ belong to disjoint $\xi_i(b)^m$-orbits, so that $y_i^0$ is not in the target of $\xi_i(t)$.
	Therefore, we can extend $\xi_i(t)$ by letting for every $0 \leq j \leq r-1$ and $k\in\Z$:
	\[
	y_i^{j+1}\cdot \xi_i(t) \coloneqq y_i^j  \text{\ \  and \ \ } y_i^{j+1}\xi_i(b)^{nk} \cdot \xi_i(t) \coloneqq y_i^j\xi_i(b)^{mk}.
	\]

	We now have $2d$ transitive preactions $\xi_i$ which extend 
	the Schreier balls of radius $R$ centered at $x_i$ of our original actions $\alpha_i$, 
	and a large word $wbt^{-r}$, with $r\geq 0$, 
	such that for all $i\in\{1,\dots,2d\}$, by following the word $wbt^{-r}$ and starting at $x_i$, 
	we end up at the point $y^r_i$ whose \( b \)-orbit's cardinality is equal to the phenotype $q$. 

	\begin{cproofbis}{Proof of Claim \ref{claim: lij is stationary}}
		Our claim is equivalent to stating that, from a certain rank, $\abs{l_i^j}_p = 0$ for all $i$ and for every prime $p$ that does not divide the phenotype. Let us thus fix 
		such a prime $p$.
		
		First, the Transfer Equation~\eqref{eq:p adic transfert} implies:
		\begin{equation}\label{eq: def mngraphes with max for l j}
			\max(\abs{l_i^{j+1}}_p - \abs{n}_p,0)
			= \max(\abs{l_i^{j}}_p - \abs{m}_p, 0).
		\end{equation}
		We consider three cases, depending on the sign of $\vert m\vert _p-\vert n\vert_p$.
		
		\begin{itemize}
			\item If $\vert m\vert_p < \vert n\vert _p$, then Lemma \ref{lem: leaving the seed uniformly} (Condition \ref{item: separation in mfs}) 
			gives $\abs{l_i^0}_p = \abs{L(v_i)}_p = \abs{m}_p$, and Equation \eqref{eq: def mngraphes with max for l j} allows $\abs{l_i^{j+1}}_p = 0$ starting from $j = 0$ (which happens since we choose $l_i^{j+1}$ as small as possible).

			\item If $\vert m\vert _p>\vert n\vert_p$, then, as long as $\abs{l_i^j}_p > 0$, we have:
			
			\[
			\abs{l_i^{j+1}}_p = 
			\begin{cases}
				\abs{l_i^j}_p - \abs{m}_p + \abs{n}_p & \text{if} \, \abs{l_i^j}_p > \abs{m}_p; \\
				0 & \text{if} \, \abs{l_i^j}_p \leq \abs{m}_p.
			\end{cases}
			\]
			
			This follows from equation \eqref{eq: def mngraphes with max for l j}.
			Hence, from a certain rank $r_p$, we get $\abs{l_i^j}_p = 0$.
			
			\item If $\vert m\vert _p=\vert n\vert_p$, we have $\abs{l_i^0}_p \leq \abs{m}_p$ since $p$ is assumed not to divide the phenotype. As before, Equation~\eqref{eq: def mngraphes with max for l j} allows $\abs{l_i^j}_p = 0$ starting from $j = 1$.
			
		\end{itemize}
		Taking $r$ larger than the finitely many $r_p$'s arising when $\vert m\vert _p>\vert n\vert_p$, the claim is proven.
	\end{cproofbis}

	\paragraph{Step 5. Final welding.}
	
	For every $i\in\{1,...,d\}$, we weld $\xi_i$ and $\xi_{i+d}$ into a new transitive preaction $\eta_i$ as follows.
	The domain of $\eta_i$ is the disjoint union of the domain of $\xi_i$, the domain of $\xi_{i+d}$, 
	and a set $O_i$ of cardinal equal to $\frac{nq}{\gcd(n,q)}$, i.e.\ the least common multiple $n$ and $q$. 
	The bijection $\eta_i(b)$ is the disjoint union of $\xi_i(b)$, and $\xi_{i+d}(b)$, and of a cycle of length $\frac{nq}{\gcd(n,q)}$ on $O_i$.
	
	In order to define $\eta_i(t)$, let us start with $\eta_i(t) \coloneqq \xi_{i}(t) \sqcup \xi_{i+d}(t)$.
	Then, we fix some $z_i\in O_i$ and we want to extend $\eta_i(t)$ in such a way that $z_i$ is sent onto $y^r_i$ and $z_i\eta_i(b)$ is sent onto $y^r_{i+d}$.
	In order to enforce
	the equivariance condition in the definition of preactions, observe
	that the following orbits all have the same cardinal, namely $\frac{q}{\gcd(n,q)}=\frac{q}{\gcd(m,q)}$:  the $\eta_i(b)^n$-orbits of $z_i$ and of $z_i \eta_i(b)$, the $\eta_i(b)^m$-orbits of $y^r_i$ and of $y^r_{i+d}$. 
	Moreover, since the cardinality of the $\eta_i(b)$-orbit of $z_i$ is a multiple of $n$, the $\eta_i(b)^n$-orbits of $z_i$ and $z_i \eta_i(b)$ are disjoint.
	This allows us to extend $\eta_i(t)$ as wanted
	by setting
	\[
	\left( z_i \eta_i(b)^{nk} \right) \cdot \eta_i(t) \coloneqq y^r_i\eta_i(b)^{mk}
	\quad \text{ and } \quad
	\left( z_{i} \eta_i(b)^{nk+1} \right)\cdot  \eta_i(t) \coloneqq y^r_{i+d}  \eta_{i}(b)^{mk}
	\] 
	for all $k\in\Z$.
	Then, we extend $\eta_i$ to a transitive \( \BSo(m,n)  \)-action $\tilde \eta_i$, e.g.\ using the maximal forest saturation as described in Section \ref{Sect: forest saturations}.
	
	Since $\tilde \eta_i$ extends $\xi_i$, the pointed action $[\tilde \eta_i,x_i]$ belongs to $U_i$,  and since $\tilde \eta_i$ extends $\xi_{i+d}$, the pointed action $[\tilde \eta_i,x_{i+d}]$ belongs to $U_{i+d}$.
	By construction, 
	\[
	x_i\tilde\eta_i(wbt^{-(r+1)})=z_i\text{ and }x_{i+d}\tilde\eta_i(wbt^{-(r+1)})=z_i\tilde\eta_i(b).
	\]
	It follows that by letting 
	\[
	\gamma \coloneqq \left(wbt^{-(r+1)}\right) b \left(wbt^{-(r+1)}\right)\inv
	\] 
	we have $x_i\tilde \eta_i(\gamma)=x_{i+d}$ for every $i\in\{1,\dots,d\}$.
	Since $[\tilde \eta_i,x_i]\in U_i$ and $[\tilde \eta_i,x_{i+d}]\in U_{i+d}$, this proves that for all $i\in\{1,\dots,d\}$, we have $U_i\gamma\cap U_{i+d}\neq\emptyset$ as wanted.
\end{proof}

\section{High transitivity results} \label{Sec: HT}

In this section, we provide the proof for Theorem \ref{th-intro: HT generic in Ph= 1 and oo}. 
Section \ref{Sect: phenotypical obstruction} describes the pieces of the phenotypical partition where there are no highly transitive actions (Theorem \ref{thm: phenotype and non primitivity} and Proposition \ref{prop: no primitive inf phen n=m}).
Section \ref{Sect: generic HT} then shows the genericity
of high transitivity in the remaining pieces (Theorem \ref{th: HT generic in Ph= 1 and oo}), thus completing the proof of Theorem \ref{th-intro: HT generic in Ph= 1 and oo}.

\subsection{Phenotypical obstruction to high transitivity}\label{Sect: phenotypical obstruction}

Recall that an action is \defin{primitive} when it preserves no non-trivial equivalence 
relation, and note that \(2\)-transitive (in particular highly transitive) actions
are always primitive. The purpose of this section is to prove the following result.

\begin{theorem}\label{thm: phenotype and non primitivity}
	Let \( m,n\in\Z\smallsetminus\{0\} \), let \( q \) be an \( (m,n) \)-phenotype 
	such that \(q\neq 1\) and \(q\neq\infty\). 
	Then every transitive \(\BSo(m,n)\)-action of phenotype \(q\) on an infinite set 
	fails to be primitive. 
	In particular, there are no highly transitive  \(\BSo(m,n)\)-actions of phenotype \(q\).
\end{theorem}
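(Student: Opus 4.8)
The plan is to prove the statement directly: show that every transitive $\BSo(m,n)$-action $\alpha=(X,\beta,\tau)$ of phenotype $q\notin\{1,\infty\}$ on an infinite set admits a nontrivial $\Gamma$-invariant equivalence relation. Since $2$-transitive (in particular highly transitive) actions are primitive, this gives both conclusions at once. First I would record two trivial reductions: no $\beta$-orbit can be infinite (that would force phenotype $\infty$), and there must be infinitely many $\beta$-orbits (otherwise $X$ would be finite).

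Next I would extract the structural input coming from phenotype invariance of labels. Fix a prime $p\mid q$; then $\abs m_p=\abs n_p=:a_p<\abs q_p$. Since the phenotype is a transfer-invariant (\cite[Proposition~4.6]{CGLMS-22}), every vertex label $L(v)$ of $\BSe(\alpha)$ has phenotype $q$; plugging this into the $p$-adic transfer equation \eqref{eq:p adic transfert} along paths and using $\abs m_p=\abs n_p$ forces $\abs{L(v)}_p=\abs q_p$ at every vertex $v$ and every $p\mid q$. Consequently every $\beta$-orbit $\mathcal O$ has cardinality $L(\mathcal O)=q\,L'(\mathcal O)$ with $\gcd\!\big(L'(\mathcal O),q\big)=1$. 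Put $g:=\gcd(q,m)$; a short $p$-adic computation gives $g=\gcd(q,n)=\prod_{p\mid q}p^{a_p}$, that $g$ divides both $m$ and $n$, and — crucially, because $\abs q_p>a_p$ for $p\mid q$ — that $q/g=\prod_{p\mid q}p^{\abs q_p-a_p}>1$.

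Then I would define the equivalence relation $E$: declare $x\mathbin{E}y$ iff $x,y$ lie in a common $\beta$-orbit $\mathcal O$ and $y\in x\langle\beta^{\,gL'(\mathcal O)}\rangle$; equivalently, on each orbit $\mathcal O$, $E$ is the coset partition of the unique subgroup of order $q/g$ of the cyclic group $\langle\beta|_{\mathcal O}\rangle$ of order $qL'(\mathcal O)$. Its classes have size $q/g>1$, and each $\beta$-orbit contributes $gL'(\mathcal O)\ge 1$ classes, so (infinitely many orbits) $E$ is nontrivial with more than one class. That $E$ is $\beta$-invariant is immediate. The heart of the proof is $\tau$-invariance, and it rests on the inclusion $\langle\beta^{\,gL'(\mathcal O)}\rangle\subseteq\langle\beta^{\gcd(n,L(\mathcal O))}\rangle$ inside $\langle\beta|_{\mathcal O}\rangle$ (equivalently $\gcd(n,L(\mathcal O))\mid gL'(\mathcal O)$, a one-line $p$-adic check using $\abs q_p>\abs n_p$), and symmetrically with $m$ on the range side. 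Because of this, if $x\in\dom\tau$ then its whole $E$-class lies in $\dom\tau$; and since $\tau$ maps the $\beta^n$-orbit of $x$ onto the $\beta^m$-orbit of $x\tau$ $\Gamma$-equivariantly ($x\beta^{nk}\tau=x\tau\beta^{mk}$), identifying both orbits with $\mathbb Z/\ell\mathbb Z$ for the common edge label $\ell=L(\mathcal O_x)/\gcd(n,L(\mathcal O_x))=L(\mathcal O_{x\tau})/\gcd(m,L(\mathcal O_{x\tau}))$ reduces "$\tau$ sends $E$-classes to $E$-classes" to the identity $gL'(\mathcal O_x)/\gcd(gL'(\mathcal O_x),n)=gL'(\mathcal O_{x\tau})/\gcd(gL'(\mathcal O_{x\tau}),m)$, which follows from the transfer equation \eqref{eq:transfert} by restricting to primes not dividing $q$ (the primes dividing $q$ contribute trivially to both sides). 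Hence $E$ is $\Gamma$-invariant, $\alpha$ is imprimitive, and so it is not highly transitive.

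I expect the only genuine obstacle to be this last verification — keeping the $p$-adic bookkeeping honest so that the order-$(q/g)$ blocks are simultaneously sub-suborbits for $\beta^n$ and for $\beta^m$ and are matched correctly by $\tau$; the reductions, the label computation, and nontriviality of $E$ are routine. As a sanity check (and a conceptual remark worth including) one notes that $X/E$ is again a transitive $\Gamma$-action, now of phenotype $1$, which explains why exactly the phenotypes $1$ and $\infty$ escape this obstruction.
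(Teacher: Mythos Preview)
Your proposal is correct and follows essentially the same approach as the paper: the equivalence relation you call $E$ is exactly the paper's \emph{reduced $b$-orbit relation} $\Rc^{\mathrm{red}}_{\alpha(b)}$, whose classes are the $\alpha(b)$-suborbits of cardinality $\PHEred(\alpha)=q/\gcd(q,m)=q/g$, and your concluding remark that $X/E$ has phenotype $1$ is precisely the remark the paper makes after the proof. The only genuine difference is in the verification of $\tau$-invariance: you carry out a $p$-adic computation reducing everything to the transfer equation, whereas the paper gives a shorter argument via B\'ezout's identity --- writing $q/r=\gcd(q,n)=uq+vn$ to obtain $\beta|_{\mathcal O}^{gL'(\mathcal O)}=\beta|_{\mathcal O}^{svn}$ for suitable $s,v$, so that $O'\alpha(t)=(x\tau)\langle\alpha(b)^{msv}\rangle$ is automatically a suborbit of the same cardinality, hence an $E$-class (by uniqueness of subgroups of a given order in a cyclic group). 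Both routes work; the B\'ezout trick avoids your prime-by-prime bookkeeping.
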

	The proof goes by exhibiting a natural nontrivial equivalence relation 
	preserved by actions of phenotype \(q\not\in\{1,\infty\}\) which we will shortly introduce.
    We begin with a preliminary definition. 
    
    \begin{definition}
    Given a permutation $\sigma\in\Sym(X)$ and a finite $\sigma$-orbit $O$, 
	a $\sigma$-\defin{suborbit} of $O$ is a subset $O'\subseteq O$ which is a $\sigma^\ell$-orbit for some $\ell\in\N$. 
    \end{definition}
    Given a any (finite) cycle $\sigma\in\Sym(X)$ of length $k$ and any $r$ dividing $k$,
    the permutation $\sigma^{\frac{k}{r}}$ generates the unique subgroup $G_r$ 
    of order $r$ in $\la\sigma\ra$.
    The support $O$ of $\sigma$ admits a unique partition of $O$ into $\sigma$-suborbits of cardinality $r$,
    which is the partition into $G_r$-orbits.
    This is also the partition of $O$ into $\sigma^{\frac{k}{r}}$-orbits, and $\sigma$ acts by permuting the pieces of this partition.
    
    We now relate these remarks to our setup as follows.
    Given a transitive action $X\curvearrowleft^\alpha \BSo(m,n)$ such that  $\PHE(\alpha) < \infty$, we call \defin{reduced phenotype} the integer
    \begin{equation}\label{eq: def reduced phenotype}
        \PHEred(\alpha) \coloneqq \frac{\PHE(\alpha)}{\gcd(\PHE(\alpha), m)} 
        = \frac{\PHE(\alpha)}{\gcd(\PHE(\alpha), n)}
    \end{equation}
    and notice that $\PHEred(\alpha) > 1$ whenever $\PHE(\alpha) > 1$ by definition of the phenotype.
    Every orbit $O$ of $\alpha(b)\in\Sym(X)$ has cardinality divisible by $\PHE(\alpha)$, hence also by $\PHEred(\alpha)$.
	\begin{remark}
		Observe that when \( m \) and \( n \) are coprime,
		the phenotype is always coprime with both \( m \) and \( n \), so 
		in this case the reduced phenotype coincides with the phenotype. 
		More generally, the reduced version alters the $p$-adic valuation of the phenotype
		only for those \(p\)'s such that 
		\[
		0< \abs m_p=\abs n_p <\abs{\PHE(\alpha)}_p, 
		\]
		in which case it becomes 
		\( 
		\abs{\PHEred(\alpha)}_p = \abs{\PHE(\alpha)}_p-\abs{m}_p
								= \abs{\PHE(\alpha)}_p-\abs{n}_p
		\).
	\end{remark}
	\begin{definition}
        Given a transitive action \( \alpha \) of finite phenotype on a set \( X \),
        we call the \defin{reduced \( b \)-orbit relation} of \( \alpha \) the equivalence relation $\Rc^{\mathrm{red}}_{\alpha(b)}$ on \( X \)
        whose classes are the $\alpha(b)$-suborbits of cardinality $\PHEred(\alpha)$.
    \end{definition}
    \begin{proposition}\label{Prop: phenotypical relation is invariant}
        For every transitive action $X\curvearrowleft^\alpha \BSo(m,n)$ such that $\PHE(\alpha) < \infty$,
        the reduced \(b\)-orbit relation $\Rc^{\mathrm{red}}_{\alpha(b)}$ is invariant under \( \alpha \). 
    \end{proposition}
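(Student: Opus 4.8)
The plan is to show that the generators $b$ and $t$ of $\BSo(m,n)$ both preserve the partition $\Rc^{\mathrm{red}}_{\alpha(b)}$ into $\alpha(b)$-suborbits of cardinality $\PHEred(\alpha)$. Write $\sigma\coloneqq \alpha(b)$ and $r\coloneqq\PHEred(\alpha)$. First I would record the structural fact, stated in the paragraph preceding the proposition, that on each finite $\sigma$-orbit $O$ (of length $k$, necessarily divisible by $\PHE(\alpha)$ hence by $r$) there is a \emph{unique} partition into $\sigma$-suborbits of cardinality $r$, namely the partition into orbits of $\sigma^{k/r}$, and that $\sigma$ itself permutes the blocks of this partition. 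Since $X$ is partitioned into $\sigma$-orbits and $\Rc^{\mathrm{red}}_{\sigma}$ restricts to this canonical partition on each, it is immediate that $\sigma=\alpha(b)$ preserves $\Rc^{\mathrm{red}}_{\sigma}$: a block maps to a block within the same $\sigma$-orbit. (On infinite $\sigma$-orbits there is nothing to check, since by definition a suborbit of cardinality $r$ lives inside a finite orbit; one should however note that whether infinite orbits occur is irrelevant to the argument.)

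The substantive point is invariance under $\tau\coloneqq\alpha(t)$, and this is where I expect the main work to lie. I would fix $x\in X$ and let $O$ be its $\sigma$-orbit, $O'$ the $\sigma$-orbit of $x\tau$. The defining relation $x\tau\beta^m = x\beta^n\tau$, i.e. $\tau\sigma^m\tau\inv$ agrees with $\sigma^n$ on $\dom(\tau)\cap$(relevant orbit), means that $\tau$ carries the $\sigma^n$-orbit of $x$ bijectively onto the $\sigma^m$-orbit of $x\tau$, intertwining $\sigma^n$ with $\sigma^m$. The key numerical input is that $r=\PHEred(\alpha)=\PHE(\alpha)/\gcd(\PHE(\alpha),n)=\PHE(\alpha)/\gcd(\PHE(\alpha),m)$ divides \emph{both} $|O|/\gcd(|O|,n)$ and $|O'|/\gcd(|O'|,m)$ — this follows because $\PHE(\alpha)\mid |O|$, $\PHE(\alpha)\mid|O'|$, and a $p$-adic valuation check using the very definition of the phenotype (the primes $p$ dividing $\PHE(\alpha)$ satisfy $|\PHE(\alpha)|_p>|n|_p$, so $|\,|O|/\gcd(|O|,n)\,|_p\geq |\PHE(\alpha)|_p-|n|_p=|r|_p$, and symmetrically for $m$). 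Consequently the unique $\sigma^n$-suborbit structure of cardinality $r$ on $O$ is visible \emph{inside} the single $\sigma^n$-orbit of $x$, and likewise on the $\sigma^m$-side: the canonical $r$-suborbit of $x$ for $\sigma$ equals the $\langle \sigma^{|O|/r}\rangle$-orbit, which can be recovered as the $\langle(\sigma^n)^{(|O|/\gcd(|O|,n))/r}\rangle$-orbit inside the $\sigma^n$-orbit of $x$. I would then compute: since $\tau$ intertwines $\sigma^n$ (on $O$) with $\sigma^m$ (on $O'$) and these have equal orbit-length $L$ with $r\mid L$, it carries the order-$r$ subgroup generated by $(\sigma^n)^{L/r}$ to the order-$r$ subgroup generated by $(\sigma^m)^{L/r}$, hence carries the $r$-suborbit of $x$ (for $\sigma$) onto the $r$-suborbit of $x\tau$ (for $\sigma$). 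That is exactly $\tau$-invariance of $\Rc^{\mathrm{red}}_{\sigma}$ on $\dom(\tau)$, and by surjectivity of the action $\dom(\tau)$ meets every $\sigma$-orbit appropriately; applying the same to $\tau\inv$ handles $\rng(\tau)$.

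Having checked invariance under $\alpha(b)$, $\alpha(t)$ and $\alpha(t)\inv=\alpha(t\inv)$, I conclude invariance under all of $\BSo(m,n)=\langle b,t\rangle$, since the set of group elements preserving a fixed equivalence relation is a subgroup. The main obstacle, as indicated, is the $p$-adic bookkeeping ensuring that $r$ divides $L/\gcd(L,n)$ — equivalently that $\gcd(\PHE(\alpha),n)\cdot(L/\PHE(\alpha))$ has the right divisibility — so that the canonical $r$-suborbit is "seen" inside a single $\sigma^n$-orbit (respectively $\sigma^m$-orbit); once this is in place, the intertwining $\tau\sigma^m\tau\inv=\sigma^n$ transports the suborbit structure automatically. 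Finally, since $\PHEred(\alpha)>1$ and the $\sigma$-orbits are infinite (we are in the perfect kernel, or more generally on an infinite set with all $b$-orbits of size a multiple of $\PHE(\alpha)$; in any case each block has $r\geq 2$ elements and is a proper subset of $X$), the relation $\Rc^{\mathrm{red}}_{\sigma}$ is genuinely nontrivial, which is what Theorem~\ref{thm: phenotype and non primitivity} will exploit — though strictly for the present proposition only invariance is asserted.
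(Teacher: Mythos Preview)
Your proposal is correct and follows the same overall architecture as the paper: invariance under \(\alpha(b)\) is immediate, and invariance under \(\alpha(t)\) is obtained by showing that the size-\(r\) \(\sigma\)-suborbit of \(x\) is carried by \(\tau\) to the size-\(r\) \(\sigma\)-suborbit of \(x\tau\), using the intertwining \(x\sigma^n\tau=x\tau\sigma^m\).

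The one genuine technical difference is in how you make the size-\(r\) suborbit visible to \(\sigma^n\). The paper uses B\'ezout's identity: writing \(q=\PHE(\alpha)\), \(k=|O|=sq\), and \(\gcd(q,n)=uq+vn\), it computes directly that \(\sigma^{k/r}=\sigma^{svn}\), so the \(r\)-suborbit of \(x\) is literally the \(\langle\alpha(b)^{nsv}\rangle\)-orbit of \(x\), and the relation \(tb^m=b^nt\) immediately pushes this to the \(\langle\alpha(b)^{msv}\rangle\)-orbit of \(x\tau\). You instead establish the divisibility \(r\mid L\) (where \(L=k/\gcd(k,n)\) is the \(\sigma^n\)-orbit length) via a \(p\)-adic valuation check, and then invoke uniqueness of size-\(r\) suborbits to identify the \(\sigma^{k/r}\)-orbit with the \((\sigma^n)^{L/r}\)-orbit inside the \(\sigma^n\)-orbit of \(x\). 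Both devices are elementary; the paper's B\'ezout computation is shorter and more explicit, while your route makes the role of the phenotype's \(p\)-adic definition slightly more transparent. A minor remark: your aside about infinite \(\sigma\)-orbits is unnecessary, since finite phenotype forces every \(b\)-orbit to be finite; and the final paragraph on nontriviality of \(\Rc^{\mathrm{red}}_{\alpha(b)}\) belongs to Theorem~\ref{thm: phenotype and non primitivity} rather than to this proposition, as you yourself note.
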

    \begin{proof}
        First, $\Rc^{\mathrm{red}}_{\alpha(b)}$ is invariant under $\alpha(b)$,
        since inside every $\alpha(b)$-orbit, the suborbits of cardinality $r\coloneqq \PHEred(\alpha)$ are permuted by $\alpha(b)$.

       It remains to prove that $\alpha(t)$ also permutes the $\alpha(b)$-suborbits of cardinal $r$.
        Let us fix $x\in X$.
        Denote by $O$ the $\alpha(b)$-orbit of \( x \) and by $O'$ the $\alpha(b)$-suborbit of \( x \) of cardinality $r$.
        Its image $O' \cdot \alpha(t)$ clearly has the same cardinal $r$, so we only have to show $O' \cdot \alpha(t)$ is still an $\alpha(b)$-suborbit. 
        Let $\sigma$ denote the restriction of $\alpha(b)$ to $O$; this is a transitive permutation of $O$ of order $k\coloneqq\abs O$
        and the orbit of \( x \) under $\sigma' \coloneqq \sigma^{\frac{k}{r}}$ is equal to $O'$.

Let $q\coloneqq\PHE(\alpha)$. By Equation \eqref{eq: def reduced phenotype}, we have 
$\gcd(q,n)=\frac{q}{r}$. Bézout's identity provides some $u, v \in \Z$ such that $\gcd(q, n)=uq + vn.$
        So if $s\in\N$ satisfies  $k=sq$, 
        we obtain
        \[
            \frac{k}{r}
            = s\cdot\frac{ q}{r}
            = s \cdot \gcd(q, n)
            = s (uq + vn) .
        \]
        Then we get
        \[
            \sigma' = \sigma^{\frac{k}{r}}
            = \sigma^{suq + svn} 
            = \sigma^{u k + svn}
            = \sigma^{svn}.
        \]
        Hence, one has $O' = x \cdot \la \sigma' \ra = x \cdot \la \sigma^{svn} \ra=x \cdot \la \alpha(b)^{nsv} \ra$.
        Thus, using the defining relation $tb^m = b^nt$ in \( \BSo(m,n)  \), we get
         \[
            O' \cdot \alpha(t)
            = \big( x \cdot \la \alpha(b)^{nsv} \ra \big) \cdot \alpha(t)
            =\big( x \cdot \alpha(t) \big) \cdot \la \alpha(b)^{msv} \ra .
        \]
        The set $O'\cdot \alpha(t)$ is therefore an $\alpha(b)$-suborbit as desired.
    \end{proof}
    
	We can now easily prove the main result of this section. 

	\begin{proof}[Proof of Theorem \ref{thm: phenotype and non primitivity}]
		Let \( \alpha \) be a \(\BSo(m,n)\)-action of phenotype \( q\notin\{1,\infty\} \)
		on an infinite set \(X\).
		The phenotypical relation \(\Rc^{\mathrm{red}}_{\alpha(b)}\) is non-trivial: 
		all its equivalence classes have cardinal \(\PHEred(\alpha)>1\) as soon as \(\PHE(\alpha)>1\) 
		hence they are neither singletons nor equal to \(X\) which is infinite. 
	    Finally, Proposition \ref{Prop: phenotypical relation is invariant} ensures us that
		\(\Rc^{\mathrm{red}}_{\alpha(b)}\) is \( \alpha \)-invariant, 
		thus witnessing the non-primitivity of \( \alpha \).
	\end{proof}

	\begin{remark}
		Let \( \alpha \) be a transitive \(\BSo(m,n)\)-action of phenotype \(q\notin\{1,\infty\}\)
		on a set \(X\).
		Since the equivalence relation \(\Rc^{\mathrm{red}}_{\alpha(b)}\) is \( \alpha \)-invariant,
		we have a quotient \(\BSo(m,n)\)-action \(\tilde\alpha\) on the quotient set \(X/\Rc^{\mathrm{red}}_{\alpha(b)}\).
		By construction, given an \(\alpha(b)\)-orbit of cardinal \(k\),
		the corresponding \(\tilde\alpha(b)\)-orbit has cardinal \(l\), where \(l\) satisfies that
		for all prime \(p\) that if \(0<\abs{m}_p=\abs{n}_p<\abs{\PHE(\alpha)}_p \), then
		\begin{align*}
		\abs{l}_p & = \abs{k}_p-\abs{\PHEred(\alpha)}_p \\
				  & = \abs{\PHE(\alpha)}_p- (\abs{\PHE(\alpha)}_p-\abs{m}_p) \\
				  & = \abs{m}_p=\abs{n}_p,
		\end{align*}
		while \(\abs{l}_p = \abs{k}_p\) otherwise. 
		It follows that the quotient action \(\tilde\alpha\) has phenotype \(1\). 
	\end{remark}

    We conclude this section by examining the case of infinite 
    phenotype when \(\abs m=\abs n\), where a similar and 
    easier argument holds.
    
    \begin{proposition}\label{prop: no primitive inf phen n=m}
        Let \(\abs m=\abs n \geq 2\), then \(\BSo(m,n)\) has 
        no primitive transitive action of infinite phenotype.
    \end{proposition}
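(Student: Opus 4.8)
The plan is to exhibit a nontrivial $\alpha$-invariant equivalence relation on the underlying set of an arbitrary transitive action $\alpha$ of infinite phenotype, which contradicts primitivity. This parallels the strategy of Proposition~\ref{Prop: phenotypical relation is invariant}, but is simpler: instead of a suborbit construction, one can use a genuine normal subgroup. Indeed, when $\abs m=\abs n$, writing $n=\varepsilon m$ with $\varepsilon=\pm 1$, the subgroup $N\coloneqq\la b^m\ra$ is normal in $\Gamma=\BSo(m,n)$: it is centralized by $b$, the relation gives $tb^mt\inv=b^{\varepsilon m}\in N$, and conjugating the consequence $tb^{\varepsilon m}t\inv=b^m$ yields $t\inv b^mt=b^{\varepsilon m}\in N$; moreover $N\neq\{\id\}$ since $\Gamma$ is torsion-free and $m\neq 0$.

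Given a transitive action $\alpha$ of infinite phenotype on $X$, a basepoint $x_0\in X$, and $\Lambda\coloneqq\Stab_\alpha(x_0)$, I would first record that, by definition of the phenotype, infinite phenotype is exactly the statement $[\la b\ra:\la b\ra\cap\Lambda]=\infty$ (using $\Phe_{m,n}(L)=\infty\iff L=\infty$ from Section~\ref{sec:phenotype}); since $\la b\ra\cong\Z$ has no nontrivial subgroup of infinite index, this forces $\la b\ra\cap\Lambda=\{\id\}$. Because $N$ is normal, every $\alpha(g)$ permutes the $N$-orbits of $X$, so the partition of $X$ into $N$-orbits is $\alpha$-invariant; it then suffices to check this partition is nontrivial. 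It is not the partition into singletons, since $b^m\notin\la b\ra\cap\Lambda=\{\id\}$ shows $b^m$ does not fix $x_0$. And it is not the one-block partition: otherwise $N$ would act transitively, so $x_0\cdot\alpha(b)$ would lie in the $N$-orbit of $x_0$, giving $x_0\cdot\alpha(b)=x_0\cdot\alpha(b^{km})$ for some $k\in\Z$, hence $b^{1-km}\in\la b\ra\cap\Lambda=\{\id\}$, i.e.\ $km=1$, which is impossible for $\abs m\geq 2$.

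Thus $\alpha$ preserves a nontrivial equivalence relation and cannot be primitive. I do not anticipate a genuine obstacle: once $N\trianglelefteq\Gamma$ is identified, the argument is just the classical fact that a normal subgroup of a primitive action acts trivially or transitively, together with the two elementary exclusions above. The only point that deserves to be stated carefully is the passage from ``infinite phenotype'' to $\la b\ra\cap\Lambda=\{\id\}$, which is immediate from the definitions recalled in Section~\ref{sec:phenotype}.
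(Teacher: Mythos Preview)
Your proof is correct and follows essentially the same approach as the paper: both exploit that $\la b^m\ra$ is normal when $\abs m=\abs n$, that infinite phenotype means $b$ acts freely, and that the resulting orbit partition is therefore nontrivial and $\alpha$-invariant. Your version simply spells out in more detail what the paper compresses into three sentences, including the explicit verification of normality and the two exclusions (not singletons, not one block).
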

    \begin{proof}
        Let \(\alpha\) be a transitive \(\BSo(m,n)\)-action of infinite
        phenotype. By definition \(b\) acts freely, so the partition into \( b^m \)-orbits is non trivial. Since \( \la b^m\ra \) is a normal subgroup, this partition
        is invariant, and \(\alpha\) is thus not primitive.
    \end{proof}

\subsection{Genericity of highly transitive actions}\label{Sect: generic HT}
In this section, we prove the following:

\begin{theorem}\label{th: HT generic in Ph= 1 and oo}
    Let $\abs{m}\neq 1$ and $\abs{n}\neq 1$ and let $q$ be a \( (m,n) \)-phenotype.
    Let $\Gamma=\BSo(m,n)$ and let $\PK_q $ be the subset of the perfect kernel $\PK(\Gamma)$ consisting of actions of phenotype~$q$.
    The set of highly transitive actions is dense \( G_\delta \)  in $\PK_q $ when 
    \begin{itemize}
        \item either $q=1$,  
        \item or $q=\infty$ and  $\abs{m} \neq \abs{n}$.
    \end{itemize}
\end{theorem}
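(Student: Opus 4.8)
The plan is to verify that $\mathcal P=\PK_q$ satisfies Condition~(\textasteriskcentered) of Lemma~\ref{lem: HT and Baire}; since $\PK_q$ is $G_\delta$ (it is a perfect compact space) and $\mathcal{HT}(\Gamma)$ is $G_\delta$ by Lemma~\ref{lem: Sub infty and HT are Gdelta}, this yields the stated dense $G_\delta$ conclusion. Note that the case $q=\infty$, $\abs m\neq\abs n$ is already contained in Corollary~\ref{cor: htt from ht and tt for BS}, which rests on \cite[Theorem~4.4]{fima_characterization_HT_2022}; so the genuinely new content is $q=1$, but I would run a single argument covering both cases, built on the same machinery as the proof of Theorem~\ref{thm: multiple topological transitivity}.

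Fix $[\alpha,x]\in\PK_q$, an integer $d$, elements $g_1,\dots,g_{2d}\in\Gamma$ with the points $y_i\coloneqq x\alpha(g_i)$ pairwise distinct, and a basic neighbourhood $\Nc([\alpha,x],R)$; enlarging $R$, we may assume the paths from $x$ labelled $g_1,\dots,g_{2d}$ lie inside the ball $B(x,R)$. Let $\alpha^0$ be the restriction of $\alpha$ to the union of the $b$-orbits meeting $B(x,R)$: this is a finite non-saturated pre-action, and replacing $\alpha$ by the maximal forest saturation of $\alpha^0$ changes nothing inside the $R$-ball of the Schreier graph, exactly as in Step~1 of the proof of Theorem~\ref{thm: multiple topological transitivity}. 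Apply Lemma~\ref{lem: leaving the seed uniformly} to the $2d$ pointed pre-actions $(\alpha^0,y_1),\dots,(\alpha^0,y_{2d})$: it produces a reduced path $c$ in $\Tree$ ending with a positive edge, equivalently an element $g\in\Gamma$ whose normal form ends with a positive power of $t$, such that for every $i$ the path from $y_i$ labelled $g$ in $\Schreier(\alpha)$ leaves the seed into the forest of the maximal forest saturation and ends at a point $z_i$ whose $b$-orbit $v_i$ is cut off from $\BSe(\alpha^0)$ by the final (positive) edge of $\pi_i(c)$ (item~\ref{item: separation in mfs}), with $L(v_i)$ a multiple of $\abs m$ and, when $q<\infty$, satisfying the $p$-adic constraints of item~\ref{item: label in mfs}; here $\pi_i\colon\Tree\to\BSe(\alpha)$ is the morphism sending the base vertex to the $b$-orbit of $y_i$.

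From here I would follow Steps~3--5 (and Step~4bis) of the proof of Theorem~\ref{thm: multiple topological transitivity}, the one difference being that the $2d$ points now live in a single action. Assume for the moment that $v_1,\dots,v_{2d}$ are pairwise distinct $b$-orbits. Keep only the $b$-orbits and edges of $\alpha$ shrinking to $\BSe(\alpha^0)\cup\pi_i(c)$, obtaining pre-actions $\xi_i\supseteq\alpha^0$ in which the $b^n$-orbit of $z_i$ and the $b^m$-orbit of $z_i\xi_i(b)$ are free. Attach to each $v_i$ a chain of freshly added $b$-orbits of cardinalities $l_i^0=L(v_i),l_i^1,l_i^2,\dots$ given by the minimum transfer rule (the U-turn): by Claim~\ref{claim: lij is stationary}, whose hypotheses are precisely item~\ref{item: label in mfs}, this sequence reaches $q$ after finitely many steps, uniformly in $i$ (for $q=\infty$ it is constant and a single fresh orbit suffices). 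This routes $y_i$, via the common word $u\coloneqq gbt^{-(r+1)}$, to a point $z_i^\ast$ lying in a freshly created $b$-orbit of cardinality $q$, so the $z_1^\ast,\dots,z_{2d}^\ast$ occupy $2d$ pairwise disjoint fresh orbits. For each $i\in\{1,\dots,d\}$ weld $z_i^\ast$ and $z_{i+d}^\ast$ through a new bridging $b$-orbit of cardinality $\mathrm{lcm}(n,q)$ carrying two new $t$-edges (using $\gcd(q,m)=\gcd(q,n)$, valid for every phenotype), exactly as in Step~5, and saturate the outcome to a genuine action $\alpha'$. Then $\alpha'$ extends $\alpha^0$, hence $[\alpha',x]\in\Nc([\alpha,x],R)$ with $x\alpha'(g_i)=y_i$; its forest part is infinite and every orbit used has phenotype $q$, so $[\alpha',x]\in\PK_q$; and the same computation as in the proof of Theorem~\ref{thm: multiple topological transitivity} shows that $\gamma\coloneqq ubu^{-1}$ satisfies $y_i\alpha'(\gamma)=y_{i+d}$ for all $i\le d$. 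This establishes Condition~(\textasteriskcentered).

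The step I expect to be the main obstacle is the one assumed above: arranging that the $2d$ escaped orbits $v_1,\dots,v_{2d}$ are pairwise distinct. This is automatic in the proof of Theorem~\ref{thm: multiple topological transitivity} only because there the $2d$ points sit in $2d$ separate actions, whereas here they share a single $\alpha$. I would obtain it by extending the path $c$ further into the forest (the last assertion of Lemma~\ref{lem: leaving the seed uniformly} guarantees that the conclusions persist), exploiting the branching of the maximal forest saturation together with a careful choice of which $b^m$- and $b^n$-suborbits the U-turns and welds occupy — and, if necessary, by first replacing $\alpha$ by another action in $\Nc([\alpha,x],R)\cap\PK_q$ in which the $y_i$ are already spread among distinct $b$-orbits. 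Finally, no additional work is needed for the phenotypes excluded from the statement: for $q\notin\{1,\infty\}$ Theorem~\ref{thm: phenotype and non primitivity}, and for $q=\infty$ with $\abs m=\abs n$ Proposition~\ref{prop: no primitive inf phen n=m}, already show that there is not even a primitive action of that phenotype, so Theorem~\ref{th: HT generic in Ph= 1 and oo} together with these completes the proof of Theorem~\ref{th-intro: HT generic in Ph= 1 and oo}.
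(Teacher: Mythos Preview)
Your overall strategy matches the paper's closely: verify Condition~(\textasteriskcentered) of Lemma~\ref{lem: HT and Baire} by escaping the seed via Lemma~\ref{lem: leaving the seed uniformly}, doing a U-turn, and welding. You also correctly identify the one genuine difficulty compared with Theorem~\ref{thm: multiple topological transitivity}: here the $2d$ points live in a \emph{single} action, so the escaped vertices $v_1,\dots,v_{2d}$ in $\BSe(\alpha)$ need not be distinct, and your welding of ``fresh'' chains attached separately to each $v_i$ presupposes exactly that. The problem is that none of your proposed fixes actually works. Replacing $\alpha$ inside $\Nc([\alpha,x],R)$ cannot move the $y_i$ into distinct $b$-orbits, since those orbits are determined by the seed $\alpha^0$. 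And ``extending $c$ by more positive edges'' or ``choosing suborbits carefully'' fails in general: take $m=n=2$, $q=1$, and suppose $z_i=z_j\beta^2$ in a $b$-orbit of label $4$; then $z_i,z_j$ lie in the same $b^n$- and the same $b^m$-suborbit, and after any positive step in the maximal forest saturation their images again differ by $\beta^2$ in a label-$4$ orbit, so the situation repeats forever. There is simply no branching to exploit.

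The paper resolves this with two distinct, case-specific mechanisms that you do not supply. For $q=\infty$ with $\abs m\neq\abs n$, it proves a $p$-adic claim: picking $p$ with $\abs m_p<\abs n_p$, if $x_i\alpha(g)$ and $x_j\alpha(g)$ share a $b$-orbit with $x_i\alpha(g)=x_j\alpha(g)\beta^K$, then each application of $t$ strictly decreases $\abs K_p$, so after finitely many positive steps the points land in distinct $b$-orbits. For $q=1$, the paper does \emph{not} try to separate the $v_i$ beforehand; instead it replaces $\alpha$ by the \emph{minimal} forest saturation of the escaped pre-action and follows a common path of negative edges inside that saturation. Claim~\ref{claim: arriving at labels 1} shows the labels along this path reach $1$, and the punchline is that in a label-$1$ orbit a single point \emph{is} the whole orbit, so distinctness of the $2d$ points $y_i=x_i\alpha(g)$ (which is automatic, since $\alpha(g)$ is a bijection) forces the $v_i$ to be distinct. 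In other words, the U-turn is performed inside the action rather than as fresh attachments, and the hypothesis $q=1$ is precisely what converts ``distinct points'' into ``distinct orbits'' at the end of the U-turn. Your argument has the right ingredients but assembles them in the wrong order; as written, the step you flagged as ``the main obstacle'' is a genuine gap.
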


\begin{proof}[Proof of Theorem~\ref{th: HT generic in Ph= 1 and oo}]
	
	We decompose this proof into three parts: the first part does not depend on the phenotype,
	and then the proof splits between the case $q=1$ and the case $q=\infty$.

		\paragraph{Common part.}
	
	First recall that \(\mathcal K_q(\Gamma)\) is always Polish by \cite[Theorem~B]{CGLMS-22}
	and the fact that open and closed subspaces of Polish spaces are Polish for the induced topology.
	We thus use the characterization from Lemma~\ref{lem: HT and Baire} and show 
	that  Condition~(\textasteriskcentered) of Lemma~\ref{lem: HT and Baire} therefrom holds for \(\mathcal P=\PK_q \).
	
	\bigskip
	Let us thus fix some  $[\alpha,x]\in\PK_q $ and $g_1,\dots,g_{2d}\in\Gamma$
	such that the elements \(x\alpha(g_1),\dots,x\alpha(g_{2d})\) are pairwise distinct.
	Let \(\mathcal V\) be a neighborhood of $[\alpha,x]$. We must exhibit a pointed transitive action
	\([\alpha',x']\in \mathcal V\cap \PK_q \) and \(\gamma\in\Gamma\) such that for all 
	\(i\in\{1,\dots,2d\}\), we have 
	\[
	x'\alpha'(g_i) \alpha'(\gamma) = x'\alpha'(g_{i+d})
	\]

    Shrinking $\mathcal V$ if necessary, we assume that $\mathcal V=\mathcal N([\alpha,x],R)$ for some $R>0$.
    It suffices to start by setting $x'\coloneqq x$ and to construct a transitive
    action \(\alpha'\) whose Schreier \(R\)-ball at the basepoint $x$ coincides with that of \( \alpha \).
    Furthermore, we may as well assume that $R$ is larger than the length of each $g_i$ 
    with respect to the generating set $\{t,b\}$. Thus, for \(i\in\{1,\dots,2d\}\), the points \(x_i \coloneqq x\alpha(g_i)\) belong to the ball \( B_\alpha(x,R)\).
    Our aim is now to find \([\alpha',x]\in\PK_q \) 
    and \(\gamma\in\Gamma\) such that 
    \(B_{\alpha'}(x,R)=B_\alpha(x,R)\)
    and for all \(i\in\{1,\dots,d\}\), \[x_i\alpha'(\gamma)=x_{i+d}.\]	 
    
    Let \(B\) be the union of the \(b\)-orbits that intersect the ball $B_\alpha(x,R+1)$. 
	 Since $[\alpha,x]\in\mathcal K(\Gamma)$, the action $\alpha$ has infinitely many $b$-orbits, so the set \(B\) is a proper subset of the domain of $\alpha$.

	 Consider the transitive pre-action $\alpha^0$ obtained by restricting \( \alpha \) to \(B\) .
	 We now replace \( \alpha \) by the maximal forest saturation of $\alpha^0$. 
	 This does not affect the definition of $\mathcal V=\mathcal N([\alpha,R])$ by construction, 
	 nor the definition of each $x_i=x\alpha(g_i)$ since $R$ was taken larger than the length of each $g_i$.

	 Let $X$ be the domain of $\alpha$.
	 In order to apply Lemma \ref{lem: leaving the seed uniformly}, let us consider the family of pointed preactions \((\alpha^0,x_1), \dots,(\alpha^0,x_{2d})\), and the associated
	 maximal forest saturations \(\alpha_1\coloneqq\alpha,\dots,\alpha_{2d} \coloneqq \alpha\).
	 For $i\in\{1,\dots,2d\}$,  let
	$\pi_i:\Tree\to \BSe(\alpha)$ be the graph morphism associated 
	to the unique \( \Gamma \)-equivariant map $\hat\pi_i:\Gamma\to X$ taking $\id$ to $x_i$. We now take the path \( c \) provided by Lemma~\ref{lem: leaving the seed uniformly}.

	Let $r$ be a geodesic segment in $\Tree$ of length larger than that of $c$, such that $\source(r) = \target(c)$, and consisting only of positive edges.
	The path $cr$ still satisfies the conclusions of Lemma \ref{lem: leaving the seed uniformly} (by its ``Moreover'' part).

	It follows that for every \(i,j\), if the target
	\(\pi_j(\target(cr))\) belongs to $\pi_i(cr)$, then, in fact, $\pi_j(\target(cr))$ belongs to $\pi_i(r)$ and not to $\pi_i(c)$.
	Thus, the unique path from \(\pi_j(\target(cr))\) to $\pi_i(\target(cr))$ is a subpath of $\pi_i(r)$), thus made of positive edges.
	We now split the proof in two cases.

	\paragraph{Case \(q=\infty\) and $\abs{m} \neq \abs{n}$.}
	Since \(\BSo(m,n)\) and $\BSo(n,m)$ are isomorphic, we will treat only the case \(\abs{m} < \abs{n}\). 
	Let us then fix a prime \(p\) such that $\abs{m}_p < \abs{n}_p$. 
	
	Let \(g\in \Gamma\) be such that $g\la b \ra = \target(cr)$.
	By construction, for every \(i\in\{1,\dots,2d\}\) we have 
	\(x_i\alpha(g) \la b \ra = \pi_i(\target(cr))\) in the Bass-Serre graph.

	\begin{claim} 
	For large enough \(k\in\N\) the images 
	$x_1\alpha(gt^k),\dots,x_{2d}\alpha(gt^k)$ all belong to different \( b \)-orbits.
	\end{claim}
	\begin{cproof}
	It suffices to show, for any two indices \(i\neq j\in\{1,\dots,2d\}\),
	that for large enough 
	\(k\), the points \(x_i\alpha(gt^k)\) and \(x_j\alpha(gt^k)\) belong to distinct \(b\)-orbits.
	So let us fix $i\neq j$.

	First note that 
	if for some \(k\geq 1\) the two elements \(x_i\alpha(gt^k)\), \(x_j\alpha(gt^k)\) 
	belong to the same \(b\)-orbit, they correspond
	 to the same vertex in the Bass-Serre graph, 
	 and hence must come 
	 from the same positive edge since they belong to the
	 forest part. 
	 By construction of the maximal forest saturation,
	it follows that \(x_i\alpha(gt^k)\) and \(x_j\alpha(gt^k)\)   belong to the same \(b^m\)-orbit, and their predecessors
	\(x_i\alpha(gt^{k-1})\), \(x_j\alpha(gt^{k-1})\) belong to the same \(b^n\)-orbit.
	It follows that \(x_i\alpha(g)\), \(x_j\alpha(g)\) belong to the same \(b\)-orbit.

	Now assume we have $K\in\Z$ such that $x_i\alpha(g)\alpha(b^K)=x_j\alpha(g)$ 
	and \(L\in\Z\) such that $x_i\alpha(gt)\alpha(b^L)=x_j\alpha(gt)$ 
	(witnessing that their \( t \)-images are also in the same \( b \)-orbit).
	Then by the observation we just made we must have \(n\) divides \(K\).
	But then using the Baumslag-Solitar relation \(tb^mt\inv =b^n\) and \(\abs{m}_p<\abs{n}_p\), 
	we obtain that
    $\abs{L}_p<\abs{K}_p$. Iterating this argument, we conclude that once \(k>\abs{K}_p\), 
	the elements \(x_i\alpha(gt^k)\) and \(x_j\alpha(gt^k)\) must belong to distinct \(b\)-orbits. 
	This finishes the proof of the claim since there are only finitely many pairs of indices \((i,j)\) to consider.
	\end{cproof}

    We now fix \(k\) large enough so that the points
    $y_1 \coloneqq x_1\alpha(gt^k),\dots, y_{2d} \coloneqq x_{2d}\alpha(gt^k)$ belong to pairwise disjoint \( b \)-orbits.

	Let \(\mathcal G\) be the connected subgraph of \(\BSe(\alpha)\)
	obtained as the union of \(\BSe(\alpha_0)\) with all the paths 
	\(\pi_i(cr)\)  and the positive edges \(x_i\alpha(gt^l)\alpha(\la b^n\ra)\) 
	extending them for 
	\(l\in\{0,\dots,k-1\}\). 
	Note that 
	for every \(i\in\{1,\dots,2d\}\), the vertex corresponding to the \( b \)-orbit 
	of $y_i$ 
	is in the complement of 
	$\BSe(\alpha^0)$. Moreover, this vertex has
	ingoing degree $m$ in \( \BSe(\alpha) \), 
    while it has ingoing degree $1$ in $\mathcal G$
	by the observation we made right before 
	splitting the proof in two cases.
	
    Let $G$ be the subgraph of the Schreier graph of \( \alpha \) consisting of the 
	\( b \)-orbits that are shrunk to vertices of $\mathcal G$ 
    and all the edges that are shrunk to edges of $\mathcal{G}$. 
    It is the Schreier graph of a pre-action that we denote \( \xi \). 
	
	We now extend back \(\xi\) as follows.
	We first add \(d\) new infinite \(\xi(b)\)-orbits \(O_1,\dots,O_d\), and pick for each \(j\in\{1,\dots,d\}\)
	one point \(z_j\in O_j\). Observe that \(z_j\) and \(z_j\xi(b)\) belong to distinct \(b^m\)-orbits.
	We can then extend \(\xi\) further by letting, for every \(l\in\Z\): 
	\[
	z_j \xi(b^{lm})\xi(t)=y_j\xi(b) \xi(b^{ln}) =\text{ and } 
	z_j \xi(b^{lm+1})\xi(t)=y_{d+j} \xi(b).
	\]
	Finally let \(\alpha'\) be an arbitrary forest-saturation of \(\xi\), 
	we claim that $\alpha'$ 
    is the transitive action we are after.
    Indeed, since we did not modify the path $\pi_i(cr)$ nor
	the positive edges \(x_i\alpha(gt^l)\alpha(\la b^n\ra)\) following it for 
	\(l\in\{1,\dots,k\}\), 
	we have $x_i\alpha(gt^k) = x_i\alpha'(gt^k) = y_i$ 
	for all $i\in\{1,\dots,2d\}$.
    By construction, we now have, letting \(\gamma=(gt^kbt\inv)b(gt^kbt\inv)^{-1}\), that:
    \begin{align*}
		x_j\alpha'(\gamma)&=x_j\alpha'(gt^k bt^{-1}btb\inv t ^{-k}g^{-1})\\
         & = y_j\alpha'(bt^{-1}btb^{-1}t^{-k}g^{-1})\\
        & = z_j\alpha'(btb^{-1}t^{-k}g^{-1})\\
        & = y_{j+d}\alpha'(t^{-k}g\inv)\\
        & = x_{j+d}.
    \end{align*}
    Moreover, since $\alpha'$ extends \( \alpha_0 \), 
	it belongs to \( \mathcal V=\mathcal N(\alpha,R) \) as wanted.

	\paragraph{Case $q=1$.}
    Let $\mathcal G$ be the subgraph of \( \BSe(\alpha) \)  consisting of the union of $\BSe(\alpha^0)$ with all the $\pi_i(cr)$, for $i=1, \cdots, 2d$.
   	As in the previous case, each $\pi_i(\target(cr))$ is in the complement of $\BSe(\alpha^0)$. Since it comes from a positive edge, its label satisfies $\gcd(L(\pi_i(\target(cr))),m)=\abs{m}$ by the transfer rule. 
   	So it has ingoing degree $\abs{m}$ in \( \BSe(\alpha) \), 
    while it has ingoing degree $1$ in $\mathcal G$.
	
    Let $G$ be the subgraph of the Schreier graph of \( \alpha \) consisting of the \( b \)-orbits that are shrunk to vertices of $\mathcal G$ 
    and all the edges that are shrunk to edges of $\mathcal{G}$. 
    As in the first case, $G$ is the Schreier graph of a pre-action \( \xi \) which 
    extends \( \alpha_0 \) and whose Bass-Serre graph is $\mathcal G$.
    
    We now replace \( \alpha \) by the minimal forest-saturation of \( \xi \), which we still denote \( \alpha \). 
    Then \( \alpha \) still extends \( \xi \) and thus it also extends $\alpha^0$. 
    In particular, this new modification of \( \alpha \) does not modify the open set $\mathcal N([\alpha,R])$ nor the elements $x_i=x\alpha(g_i)$. 
	Moreover, for every $i\in\{1,\dots,2d\}$ the path $\pi_i(cr)$ is also left unchanged since \( \alpha \) extends \( \xi \).
	Its terminal vertex has the same label as before and hence has the same (maximal) ingoing degree as before, namely $m$.

    We now extend the path $cr$ in $\Tree$ to a reduced (infinite) path $crs$ by adding a geodesic ray only made  of negative edges. 
    Since both $\target (cr)$ and $\pi_i(\target(cr))$ have $m$ ingoing edges, the path $\pi_i(crs)$ is reduced at the vertex $\pi_i(\target(cr))$. Moreover, the orientation of the two edges incident to $\pi_i(\target(cr))$ in $\pi_i(crs)$ being opposite, while the ongoing degree of $\pi_i(\target(cr))$ in $\mathcal G$ is $1$, it follows that the path $\pi_i(s)$ 
    is a geodesic whose intersection with $\Gc$ is reduced to its source $\source(\pi_i(s))$.

    Let us denote by $(v_{i,k})_{k\geq 0}$ the vertices of $\pi_i(s)$ in \( \BSe(\alpha) \).
    The following version of Claim \ref{claim: lij is stationary} holds.
    \begin{claim}\label{claim: arriving at labels 1}
        For $k$ large enough, the labels satisfy $L(v_{i,k}) = 1$ for $ i=1,\ldots, 2d$. In other words, the \( b \)-orbits corresponding to $v_{i,k}$ are singletons.
    \end{claim}
    \begin{cproof}
        Let us fix some index $i$. 
        It suffices to prove $L(v_{i,k}) = 1$ for $k$ large enough.
        So let us abbreviate $v_{i,k}$ as $v_k$.
        For each $k$, let $e_k$ be the positive edge from $v_{k+1}$ to $v_k$.
        The Transfer Equation~\eqref{eq:p adic transfert} implies:
        \[
	\max(\abs{L(v_{k+1})}_p - \abs{n}_p,0)
	= \abs{L(e_k)}_p
	= \max(\abs{L(v_k)}_p - \abs{m}_p, 0).
\]
    Since we are in the minimal forest saturation, we then have
    \begin{equation}\label{eq: transfer min-forest sat}
        \abs{L(v_{k+1})}_p = \begin{cases}
            0 & \text{ if } \abs{L(v_k)}_p \leq \abs{m}_p \\
            \abs{L(v_k)}_p - \abs{m}_p + \abs{n}_p & \text{ if } \abs{L(v_k)}_p > \abs{m}_p .
        \end{cases}
    \end{equation}
    Let us now fix a prime $p$. We have two cases to consider:
    \begin{enumerate}
        \item  If $\abs{m}_p \leq \abs{n}_p$, 
        using that $cr$ satisfies the conclusions of Lemma \ref{lem: leaving the seed uniformly}, 
        that $v_0 = \target(\pi_i(cr))$ and that the phenotype is $1$, we have: $\abs{L(v_0)}_p = \abs{m}_p$.
        The transfer equation 
        \eqref{eq: transfer min-forest sat} (first case) gives $\abs{L(v_k)}_p = 0$ for every $k\geq 1$.
        
        \item If $\abs{m}_p >\abs{n}_p$,
        we prove that $\abs{L(v_k)}_p = 0$ for sufficiently large $k$.
        Indeed, the transfer equation \eqref{eq: transfer min-forest sat} (second case) forces $\abs{L(v_{k + 1}))}_p < \abs{L(v_k)}_p$  as long as 
        $\abs{L(v_k)}_p > \abs{m}_p$. 
        Thus, we get $\abs{L(v_{k_0})}_p \leq \abs{m}_p$ for some index $k_0$
        and then $\abs{L(v_k)}_p=0$  for all $k \geq k_0 + 1$ by Equation \eqref{eq: transfer min-forest sat} (first case).
    \end{enumerate}
    Since there are only finitely many primes such that $\abs{m}_p >\abs{n}_p$, we conclude $L(v_k) = 1$ for $k$ large enough.
    \end{cproof}
    
    By the above Claim~\ref{claim: arriving at labels 1}, we can pick an initial segment $s'$ of $s$ such that $\pi_i(\target(s'))$ has label $1$ for every $i\in\{1, \ldots, 2d\}$.
    For $i\in\{1,\dots,2d\}$, let $y_i$ be the unique element of the label $1$ vertex $\pi_i(\target(s'))$, viewed as a \( b \)-orbit.

    Fix $g\in \BSo(m,n)$ such that $g\la b\ra = \target(crs')$ in the Bass-Serre $\Tree$.
    For $i\in\{1,\dots,2d\}$, we have by equivariance and the definition of $\pi_i$ that $y_i=x_i\alpha(g)$.
    Since $\alpha(g)$ is a bijection, we conclude that the $y_i$'s are pairwise distinct.
    In particular, their image in the Bass-Serre graph, i.e.\ the vertices  $\pi_i(\target(s'))$, are pairwise distinct.
    This is the point where it is crucial that the label is $1$, and thus to be in phenotype $1$.

    We now carry out one last modification of \( \alpha \): let $\mathcal H$ be the subgraph of \( \BSe(\alpha) \)  obtained as the union of $\BSe(\alpha^0)$ with all the paths $\pi_i(crs')$, for $i=1, \ldots, 2d$.
    In fact, $\Hc$ is the union of $\Gc$ and the geodesics $\pi_i(s')$, for $i=1, \ldots, 2d$.
    
     \medskip    
    \begin{center}
    \includegraphics[scale=1]{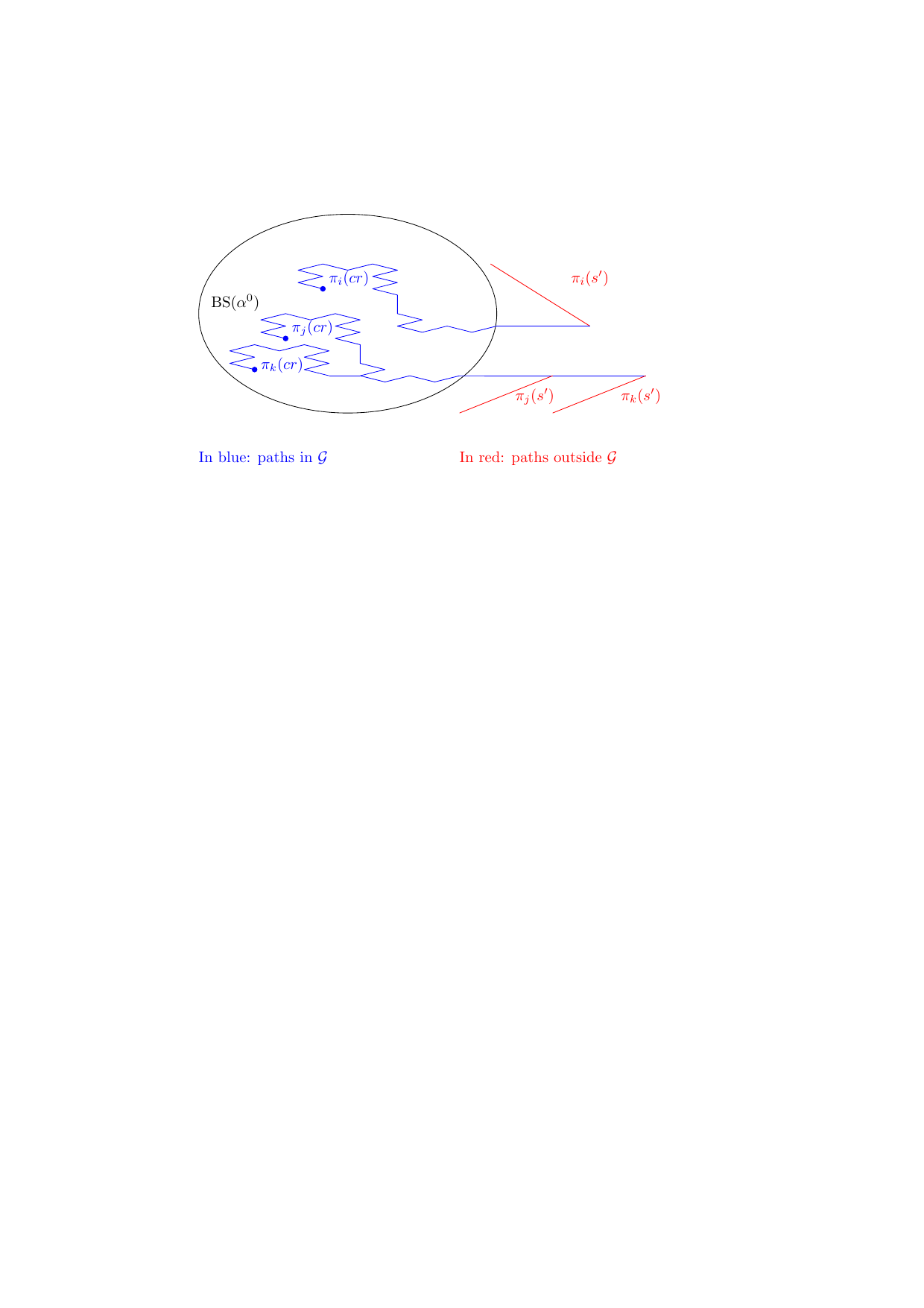}
    \end{center}
    
    Since $\pi_i(s)$ is a geodesic whose intersection with $\Gc$ is reduced to its source $\source(\pi_i(s))$, all $\pi_i(\target(crs'))$ are at the same distance from $\Gc$ in $\Hc$, namely the length of $s'$, and therefore they all have degree $1$ in $\Hc$. 
    In particular, for every $i$, the (unique) positive edge arriving at $\pi_i(\target(s'))=\{y_i\}$ lies outside $\Hc$.

    As before, the subgraph of the Schreier graph obtained by pulling back $\Hc$ is the Schreier graph of a pre-action that we denote $\eta$. 
    Observe that $\eta$ extends \( \xi \), hence also extends $\alpha^0$; its Bass-Serre graph is $\Hc$. 
       
    We now extend $\eta$ as follows:
    we add \( d \) new $\eta(b)$-orbits $O_j$ of cardinal $m$, for $j\in\{1,\dots,d\}$, and we pick one point $z_j$ in $O_j$.
    We connect $O_j$ to both $y_j$ and $y_{j+d}$ by declaring $z_j\eta(t)=y_j$
    and $z_j\eta(b)\eta(t)=y_{j+d}$.
    This is a genuine pre-action since $O_j$ decomposes as $m$ $\eta(b^m)$-orbits of cardinality $1$.
    Let $\alpha'$ be a forest-saturation of $\eta$; let us show that \(\alpha'\)
	is the action we seek.
	First, since we did not modify the path $\pi_i(crs')$, 
	we have $x_i\alpha(g) = x_i\alpha'(g) = y_i$ 
	for all $i\in\{1,\dots,2d\}$.
    By construction, we now have
    \begin{align*}
        x_j\alpha'(g t^{-1}btg^{-1}) & = y_j\alpha'(t^{-1}btg^{-1})\\
        & = z_j\alpha'(btg^{-1})\\
        & = y_{j+d}\alpha'(g\inv)\\
        & = x_{j+d}.
    \end{align*}
    Moreover, since $\alpha'$ extends \( \alpha_0 \), 
	it belongs to \( \mathcal V=\mathcal N(\alpha,R) \) as wanted. 
	This finishes the proof of the second and last case,
	so Theorem~\ref{th: HT generic in Ph= 1 and oo} is proved.
\end{proof}

\bibliographystyle{alphaurl}

\bigskip
{\footnotesize
	
	\noindent
	{D.~G., \textsc{CNRS, ENS-Lyon, 
			Unité de Mathématiques Pures et Appliquées,  69007 Lyon, France}}
	\par\nopagebreak \texttt{damien.gaboriau@ens-lyon.fr}
	
	\medskip

	\noindent
	{F.~L.M., \textsc{Université Bourgogne Europe, CNRS, IMB UMR 5584, 21000 Dijon, France}}
	\par\nopagebreak \texttt{flemaitre@math.cnrs.fr}
	
	\medskip
	
	\noindent
	{Y.~S., \textsc{Université Clermont Auvergne, CNRS, LMBP, F-63000 Clermont–Ferrand, France}}
	\par\nopagebreak \texttt{yves.stalder@uca.fr}
}

\end{document}